\def\Z{{\mathbb Z}}
\def\bZ{{\mathbb Z}}
\def\Gal{{\rm Gal}}
\def\GL{{\rm GL}}
\def\SL{{\rm SL}}
\def\Cl{{\rm Cl}}
\def\Aut{{\rm Aut}}
\def\Stab{{\rm Stab}}
\def\Id{{\rm Id}}
\def\End{{\rm End}}
\def\eq{{\rm eq}}
\def\im{{\rm im}}
\def\Disc{{\rm Disc}}
\def\disc{{\rm disc}}
\def\proj{{\rm proj}}
\def\red{{\rm red}}
\def\R{{\mathbb R}}
\def\bR{{\mathbb R}}
\def\bC{{\mathbb C}}
\def\bF{{\mathbb F}}
\def\I{{\mathcal I}}
\def\Q{{\mathbb Q}}
\def\Z{{\mathbb Z}}
\def\Q{{\mathbb Q}}
\def\O{{\mathcal O}}
\def\cI{{\mathcal I}}
\def\bQ{{\mathbb Q}}
\def\cV{{\mathcal V}}
\def\bZ{{\mathbb Z}}
\def\bF{{\mathbb F}}
\def\bQ{{\mathbb Q}}
\def\cO{{\mathcal O}}
\def\cI{{\mathcal I}}
\def\cS{{\mathcal S}}
\def\cW{{\mathcal W}}
\newtheorem{theorem}{Theorem}
\newtheorem{lemma}[theorem]{Lemma}
\newtheorem{corollary}[theorem]{Corollary}
\newtheorem{proposition}[theorem]{Proposition}
\newtheorem{example}[theorem]{Example}
\newtheorem{remark}[theorem]{Remark}
\newenvironment{proof}{\noindent {\bf Proof:}}{$\Box$ \vspace{2 ex}}
\begin{document}

\title{The mean number of $3$-torsion elements in the class groups and
  ideal groups of quadratic orders}

\author{Manjul Bhargava and Ila Varma}

\maketitle

\begin{abstract}
  We determine the mean number of 3-torsion elements in the class
  groups of quadratic orders, where
  the quadratic orders are ordered by their absolute discriminants.
  Moreover, for a quadratic order $\O$ 
we distinguish between the two groups: $\Cl_3(\O)$, the group of
{\it ideal classes of order~$3$}; and $\I_3(\O)$, the group of {\it
  ideals of order~$3$}.  We determine the mean values of
both $|\Cl_3(\O)|$ and $|\I_3(\O)|$, as $\O$ ranges over any
family of orders defined by finitely many (or in suitable cases, even infinitely many) local conditions.
  
As a consequence, we prove the surprising fact 
that the mean value of the difference
$|\Cl_3(\O)|-|\I_3(\O)|$ is equal to 1, regardless of
whether one averages over the maximal orders in complex quadratic fields  
or over all orders in such fields or, indeed, over any family of
complex quadratic orders defined by local conditions.  For any family of
real quadratic orders defined by local conditions, we prove similarly 
that the mean value of the difference 
$|\Cl_3(\O)|-\frac13|\I_3(\O)|$ is equal to 1, independent of the family. 
\end{abstract}

\section{Introduction}

In their classical paper~\cite{DH}, Davenport and Heilbronn proved the
following theorem.
\begin{theorem}\label{theoremdh}
When quadratic fields are ordered by their absolute
discriminants:
\begin{itemize}
\item[{\rm (a)}]
The average number of $3$-torsion elements in the class
groups of imaginary quadratic fields~is~$2$.
\item[{\rm (b)}]
The average number of $3$-torsion elements in the class
groups of real quadratic fields is~$\textstyle{\frac{4}{3}}$.
\end{itemize}
\end{theorem}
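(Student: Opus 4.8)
The plan is to convert the average of $|\Cl_3(K)|$ into a count of cubic fields of bounded discriminant, and then to carry out that count by the geometry of numbers.

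\textbf{Reduction to counting cubic fields.} First I would invoke class field theory. For a fundamental discriminant $D$ write $K=\Q(\sqrt D)$; since the nontrivial automorphism of $K/\Q$ acts as inversion on $\Cl(K)$, every index-$3$ subgroup of $\Cl(K)$ is stable under $\Gal(K/\Q)$ and hence cuts out an unramified cyclic cubic extension $M/K$ that is Galois over $\Q$ with group $S_3$. Passing to the cubic subfields of $M$ sets up a bijection between index-$3$ subgroups of $\Cl(K)$ and isomorphism classes of cubic fields $L$ with $\disc(L)=D$. Counting the former as $(|\Cl_3(K)|-1)/2$ gives the exact identity
\[
|\Cl_3(K)| \;=\; 1 + 2\,\#\{\,L : \disc(L)=D\,\}.
\]
Summing over fundamental discriminants and dividing by their number (which is $\sim(3/\pi^2)X$ for each sign) reduces the theorem to two density statements: the number of cubic fields $L$ of fundamental discriminant with $-X<\disc(L)<0$ should be $\sim\tfrac12\cdot\tfrac{3}{\pi^2}X$, giving average $2$ for imaginary quadratic fields, while the number with $0<\disc(L)<X$ should be $\sim\tfrac16\cdot\tfrac{3}{\pi^2}X$, giving average $\tfrac{4}{3}$ for real quadratic fields. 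The entire distinction between the two parts of the theorem thus comes down to the expected $3:1$ ratio between complex and real cubic fields.

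\textbf{Parametrization and geometry of numbers.} Next I would parametrize cubic rings by the Delone--Faddeev correspondence, which identifies $\GL_2(\Z)$-equivalence classes of integral binary cubic forms with isomorphism classes of cubic rings, preserving discriminants. Irreducible forms correspond to orders in cubic fields, maximal orders to the rings of integers, and the requirement that $\disc(L)$ be fundamental becomes a local condition (maximality together with a squarefreeness-type congruence) at each prime. The core analytic task is then to count $\GL_2(\Z)$-orbits of irreducible integral binary cubic forms of bounded discriminant. For this I would fix a fundamental domain for $\GL_2(\Z)$ acting on the real space of binary cubic forms via reduction theory, separate the forms according to their number of real roots (this is exactly what splits the positive- and negative-discriminant counts and produces the $3:1$ ratio of volumes), and count integer points of bounded discriminant by averaging the point count over a compact set of group elements and applying Davenport's lattice-point lemma.

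\textbf{Main obstacle and sieve.} The two delicate points, which I expect to be the heart of the argument, are the cusp of the fundamental domain and the passage to maximal orders of fundamental discriminant. In the cusp the forms degenerate toward reducibility, and one must show that the irreducible integer points there do not swamp the main term; this requires careful tail estimates on the number of points with a bounded coordinate. To isolate cubic fields of fundamental discriminant among all cubic rings I would run an inclusion--exclusion sieve over primes $p$, imposing at each $p$ the local conditions defining maximality and a fundamental discriminant, whereupon the local densities multiply to produce the constants above (the factors of $\zeta(2)$ and $\zeta(3)$ entering through the Euler product). The principal difficulty is establishing uniformity in this sieve: one needs an estimate, uniform in $p$, for the number of binary cubic forms of bounded discriminant that are nonmaximal at $p$, strong enough to let the prime cutoff tend to infinity and legitimize the infinite product. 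Granting these uniform bounds, assembling the local densities with the volume computation yields the two asymptotic counts, and hence the averages $2$ and $\tfrac{4}{3}$.
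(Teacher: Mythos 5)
Your proposal is correct, but it is not the proof this paper gives for Theorem~1 --- it is essentially the original Davenport--Heilbronn argument, which the paper deliberately avoids. You reduce the average of $|\Cl_3(K)|$ to counting cubic fields of fundamental discriminant via the class-field-theoretic duality (index-$3$ subgroups of $\Cl(K)$ are Galois-stable because conjugation acts as inversion, hence correspond to unramified cyclic cubic extensions with group $S_3$, hence to cubic fields $L$ with $\disc(L)=D$), and then count cubic fields by Delone--Faddeev plus Davenport's lattice-point methods and a uniformity-controlled sieve. The paper instead proves Theorem~1 \emph{without class field theory}: it uses the direct parametrization (Theorem~\ref{bcfideal}, from higher composition laws) of $\SL_2(\Z)$-orbits of \emph{integer-matrix} binary cubic forms by triples $(\O,I,\delta)$, so that the forms being counted correspond directly to $3$-torsion ideal classes of quadratic orders rather than to cubic rings; the analytic input (Davenport's counts, the uniformity estimate of \cite[Prop.~1]{DH}, the sieve to maximality) is then applied to the sets $\cV_p$ of forms whose associated \emph{quadratic} ring is maximal, and the identity $|H(\cO)|=|\Cl_3(\cO)|$ or $3|\Cl_3(\cO)|$ (Corollary~\ref{hr2}) converts the orbit count into the class-group average. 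The trade-off is instructive: your route is shorter if class field theory is taken as given, and it is exactly the strategy the paper later generalizes in Section~5 (via ring class fields, Proposition~\ref{rcf}) to handle non-maximal orders; the paper's route for Theorem~1 buys independence from class field theory and, more importantly, sets up the machinery in which reducible integer-matrix forms are meaningful --- they parametrize $3$-torsion in \emph{ideal groups} (Theorem~\ref{reducible}) --- which is what makes Theorems~\ref{thmorders}, \ref{sigmaid}, and \ref{diff} accessible, whereas in your framework reducible forms are simply discarded. One small caveat on your write-up: the $3{:}1$ ratio between complex and real cubic field counts does not come from fundamental-domain volumes alone but also from the orders of the relevant stabilizers (the $n_i$ factors), though this does not affect the correctness of your outline.
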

This theorem yields the only two proven cases of the 
Cohen-Lenstra heuristics for class groups of quadratic fields. 

In their paper~\cite[p.\ 59]{CL}, Cohen and Lenstra raise the
question as to what happens when one looks at class groups over {\it all
orders}, rather than just the maximal orders corresponding to fields.  
The heuristics formulated by Cohen and Lenstra for class groups of quadratic 
fields are based primarily on the assumption that,
in the absence of any known structure for these abelian groups beyond
genus theory, we may as well
assume that they are ``random'' groups in the appropriate sense.  

For orders, however, as pointed out by Cohen and Lenstra
themselves~\cite{CL}, when an imaginary quadratic 
order is not maximal there is an
additional arithmetic constraint on the class group coming from the
class number formula.  Indeed, if $h(d)$ denotes the class number of the imaginary quadratic order of discriminant $d$, and if $D$ is a (negative) fundamental discriminant,
then the class number formula gives
\begin{equation}\label{cnf}
h(Df^2) = 
\Bigl[f \cdot\prod_{{p|f}}\left(1-\frac{(D|p)}{p}\right)\Bigr]
h(D),
\end{equation}
where $(\cdot | \cdot)$ denotes the Kronecker symbol.
Thus, one would naturally expect that the
{percentage} of quadratic orders having class number divisible by 3
should be strictly larger than the corresponding percentage for
quadratic fields. Similarly, the {average number} of 3-torsion
elements across all quadratic orders would also be expected to be
strictly higher than the corresponding average for quadratic
fields.\footnote{Note that the class number formula does not give
complete information on the number of 3-torsion elements; indeed, extra 
factors of 3 in the class number may mean 
extra 3-torsion, but it could also mean 
extra 9-torsion or 27-torsion, etc.!}

In this article, we begin by proving the latter statement:

\begin{theorem}\label{thmorders}
When orders in quadratic fields are ordered by their absolute
discriminants:
\begin{itemize}
\item[{\rm (a)}]
The average number of $3$-torsion elements in the class
groups of imaginary quadratic orders~is $\displaystyle{1+\frac{\zeta(2)}{\zeta(3)}}$.
\item[{\rm (b)}]
The average number of $3$-torsion elements in the class
groups of real quadratic fields is $\displaystyle{1+\frac{1}{3}\cdot\frac{\zeta(2)}{\zeta(3)}}$.
\end{itemize}
\end{theorem}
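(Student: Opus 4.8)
The plan is to reduce the average of $|\Cl_3(\O)|$ to a count of cubic rings, via the Delone--Faddeev parametrization together with a resolvent construction, and then to perform that count by the geometry-of-numbers method of Davenport and Heilbronn, augmented by a sieve that sorts cubic rings by the conductor of their resolvent. Recall that isomorphism classes of cubic rings are in bijection with $\GL_2(\Z)$-orbits of integral binary cubic forms, the discriminant of the form equalling that of the ring, and that each cubic ring $R$ carries an associated quadratic ring, its resolvent. The first step is a local-to-global parametrization: for a fixed quadratic order $\O$ of discriminant $d$, I would identify the two flavors of $3$-torsion with isomorphism classes of cubic rings whose resolvent is $\O$, subject to explicit local conditions at each prime. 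Under such a correspondence the identity class matches the reducible cubic ring $\Z\times\O$, while nontrivial classes match irreducible forms (orders in cubic fields) in inverse pairs; the local conditions are precisely what distinguishes $\I_3(\O)$, matched with cubic rings maximal away from the conductor of $\O$, from $\Cl_3(\O)$, which imposes the stronger invertibility requirement at the conductor. Establishing these local conditions exactly is the conceptual core of the argument.

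Granting the parametrization, one writes
\[
\sum_{-X<d<0}|\Cl_3(\O_d)| \;=\; N^-(X) \;+\; 2\,C^-(X),
\]
where $d$ ranges over negative discriminants, $N^-(X)=\#\{-X<d<0\}$, and $C^-(X)$ counts the relevant irreducible cubic rings of discriminant in $(-X,0)$. Since discriminants have density $\tfrac12$, the term $N^-(X)$ contributes exactly the constant $1$ to the average --- this is the ``$+1$'' appearing in both parts of the theorem --- and the entire content of the theorem is the evaluation of $C^-(X)$ (and its real analogue $C^+(X)$). The plan for that is to count lattice points of bounded discriminant in a fundamental domain for $\GL_2(\Z)$ acting on real binary cubic forms, using the averaging-over-the-group technique to handle the cusps, and then to impose the local conditions by a congruence sieve.

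The sieve is where the zeta-quotient emerges. At each prime $p$ one sums the $p$-adic densities over the possible powers of $p$ dividing the conductor of the resolvent; for the ideal group these local sums combine into the Euler product
\[
\prod_p \frac{1-p^{-3}}{1-p^{-2}} \;=\; \frac{\zeta(2)}{\zeta(3)},
\]
which is archimedean-symmetric and therefore yields the \emph{same} average $\zeta(2)/\zeta(3)$ for $|\I_3(\O)|$ over both real and imaginary families. The real-versus-imaginary asymmetry enters only through the archimedean volume of the fundamental domain, where complex cubic rings outnumber totally real ones by a factor of $3$ per unit discriminant; it is this factor that, after translating from $\I_3$ to $\Cl_3$, produces the averages $1+\zeta(2)/\zeta(3)$ in the imaginary case and $1+\tfrac13\,\zeta(2)/\zeta(3)$ in the real case.

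The hard part will be the uniformity of the sieve. To impose infinitely many local conditions and to interchange the sieve with the limit $X\to\infty$, I would need a uniform upper bound on the number of irreducible cubic rings of bounded discriminant that are non-maximal at a prime $p$ --- equivalently, whose resolvent has conductor divisible by a large square --- strong enough that the tail of the product over $p$ is negligible. These tail estimates are the delicate point already present in the work of Davenport and Heilbronn, and the novelty here is that non-maximal cubic rings must not be discarded but instead sorted by conductor and counted, so the estimates must be both uniform in $p$ and summable over conductors. A secondary, more structural, difficulty is verifying that the local correspondence at the primes dividing the conductor genuinely separates $\Cl_3$ from $\I_3$ in the way claimed; by the class number formula~(\ref{cnf}) these primes carry the extra arithmetic that inflates the average, and pinning down their local factors exactly --- rather than up to the order of magnitude --- is what yields the clean constants $\zeta(2)/\zeta(3)$ and $\tfrac13\zeta(2)/\zeta(3)$.
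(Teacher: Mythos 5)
Your overall architecture---the constant $1$ coming from the trivial class, the reduction of the remainder of the average to a count of cubic objects of bounded discriminant, Davenport--Heilbronn lattice-point counting with a conductor-sorted sieve, and the insistence on tail estimates uniform in $p$---is sound, and is in fact a blend of the paper's two proofs. But the step you yourself identify as the conceptual core, the parametrization, is not merely missing: as you have set it up it cannot work. You propose to match $3$-torsion classes of a fixed $\cO$ with isomorphism classes of cubic rings of discriminant $\Disc(\cO)$ ``subject to explicit local conditions at each prime.'' Here is a concrete obstruction. Let $K$ be a non-Galois cubic field and $p$ a prime splitting completely in $K$. Then $\cO_K$ has exactly three subrings of index $p$ (the preimages of the three codimension-one subalgebras of $\cO_K/p\cO_K\cong\F_p^3$), and since $\Aut(K)$ is trivial these are pairwise non-isomorphic cubic rings whose localizations are isomorphic at \emph{every} prime. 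Hence any set of cubic rings defined by local conditions contains all three or none, i.e.\ counts $K$ with multiplicity $3$ or $0$ at this conductor; but the correct bookkeeping (the paper's Proposition \ref{rcf}) requires each cubic field to be counted exactly once per valid conductor in order that the count equal $\bigl(|\Cl_3(\cO)|-1\bigr)/2$. This is precisely why the paper works either with cubic \emph{fields} sorted by relative conductor via ring class field theory (Proposition \ref{rcf} and \S5.2), or else with the Eisenstein--Bhargava correspondence between \emph{integer-matrix} forms and triples $(\cO,I,\delta)$ (Theorem \ref{bcfideal})---which, as the paper stresses, is a genuinely different parametrization from the Delone--Faddeev one you invoke. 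Your proposal conflates these two dual correspondences, and the object you chose (cubic rings with prescribed resolvent, cut out by local conditions) is exactly the one for which neither bijection holds.

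The treatment of the ideal group is also internally inconsistent, and this matters because your final constants are obtained by ``translating from $\cI_3$ to $\Cl_3$.'' If nontrivial elements of $\cI_3(\cO)$ corresponded to orders in cubic fields selected by local conditions, then the $3{:}1$ archimedean disparity you correctly identify would apply to them too, forcing the average of $|\cI_3|$ over imaginary orders to be three times the average over real orders---contradicting the signature symmetry you assert (which is true: Theorem \ref{sigmaid}). In the correct direct correspondence, nontrivial $3$-torsion ideals correspond to \emph{reducible} forms (Theorem \ref{reducible}), i.e.\ to the cusp of the fundamental domain, and the symmetric value $\zeta(2)/\zeta(3)$ is the output of an explicit cusp count (Proposition \ref{redprop}); your plan instead ``handles the cusps'' by the averaging-over-the-group technique, which treats them as negligible error---exactly what they are not in this problem. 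Finally, the translation itself, i.e.\ why the imaginary average of $|\Cl_3|$ exceeds $1$ by the full ideal-group average while the real average exceeds $1$ by only a third of it, is the content of Theorem \ref{diff}; in any argument avoiding class field theory it rests on two separate factors of $3$---the archimedean one \emph{and} the unit-group identity $|H(\cO)|=3\,|\Cl_3(\cO)|$ for real orders (Lemma \ref{deltalemma}, Corollary \ref{hr2})---whereas a class-field-theoretic parametrization carries no unit factor but then must be proved by the ring-class-field argument you set out to avoid. Your sketch acknowledges only the archimedean factor, so even granting some parametrization, the real-case constant $\frac13\,\zeta(2)/\zeta(3)$ would not follow from what you have written.
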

Note that $\frac{\zeta(2)}{\zeta(3)}\approx  1.36843>1$.

More generally, we may consider the analogue of
Theorem~\ref{thmorders} when the average is taken not over all orders,
but over some subset of orders defined by local conditions.  More
precisely, for each prime $p$, let $\Sigma_p$ be any set of
isomorphism classes of orders in \'etale quadratic algebras over
$\Q_p.$ We say that the collection $(\Sigma_p)$ of local
specifications is {\it acceptable} if, for all sufficiently large $p$, the
set $\Sigma_p$ contains all the maximal quadratic rings over $\Z_p.$
Let $\Sigma$ denote the set of quadratic orders $\O$, up to
isomorphism, such that $\O\otimes\Z_p\in\Sigma_p$ for all $p$.  Then
we may ask what the mean number of 3-torsion elements in the class
groups of imaginary (resp.\ real) quadratic orders in $\Sigma$ is.

To state such a result for general acceptable $\Sigma$, we need a bit of
notation.  For an \'etale cubic algebra $K$ over $\Q_p$, we write
$D(K)$ for the unique quadratic algebra over $\Z_p$ satisfying
$\Disc(D(K))=\Disc(K)$.  Also, for an order $R$ in an \'etale
quadratic algebra over $\Q_p$, let $C(R)$ denote the weighted number
of \'etale cubic algebras $K$ over $\Q_p$ such that $R\subset D(K)$:
	\begin{equation}\label{Cdef}
	C(R) := \sum_{{\mbox{\scriptsize $K$ \'etale cubic$/ \bQ_p$}}\atop{\mbox{\scriptsize s.t.  $R \subset D(K)$}}} \frac1{\#\Aut(K)}.
	\end{equation}  
We define the ``cubic mass'' $M_\Sigma$ of the
family $\Sigma$ as a product of local masses: 
\begin{equation}\label{massdef}
M_{\Sigma} ~:= \quad 
\prod_p\,\frac{{\displaystyle\sum_{R\in\Sigma_p}\frac{C(R)}{\Disc_p(R)}}}{{\displaystyle \sum_{R\in\Sigma_p}\frac{1}{\#\Aut(R)}\cdot\frac1{\Disc_p(R)}}} = \prod_p\, \frac{{\displaystyle \sum_{R\in \Sigma_p}\frac{C(R)}{\Disc_p(R)}}}{\displaystyle{\sum_{R \in \Sigma_p} {\frac{1}{2\cdot \Disc_p(R)}}}}\,,
\end{equation}
where $\Disc_p(R)$ denotes the discriminant of $R$ viewed as a power
of $p$. We then prove the following generalization of~Theorem~\ref{thmorders}.

\begin{theorem}\label{gensigmaord}
Let $(\Sigma_p)$ be any acceptable collection of local specifications
as above, and let $\Sigma$ denote the set
of all isomorphism classes of quadratic orders $\O$  such that 
$\O\otimes\Z_p\in\Sigma_p$ for all $p$.  
  Then, when orders in $\Sigma$ are ordered by their absolute
  discriminants:
\begin{itemize}
\item[{\rm (a)}]
The average number of $3$-torsion elements in the class
groups of imaginary quadratic orders in $\Sigma$ is 
$\displaystyle{1+M_\Sigma}$.
\item[{\rm (b)}]
The average number of $3$-torsion elements in the class
groups of real quadratic orders \linebreak in~$\Sigma$~is~$\displaystyle{1+\frac{1}{3}M_\Sigma}$.
\end{itemize}
\end{theorem}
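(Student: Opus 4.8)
The plan is to count 3-torsion elements in class groups of quadratic orders via a parametrization by cubic rings, in the spirit of the Davenport–Heilbronn method (Theorem~\ref{theoremdh}) but adapted to non-maximal orders. The starting point should be the Delone–Faddeev/Gan–Gross–Savin correspondence between cubic rings and integral binary cubic forms, together with the class-field-theoretic fact that $3$-torsion ideal classes of a quadratic order correspond (up to the trivial class and a factor accounting for the $3$-torsion of the unit group) to cubic fields whose quadratic resolvent matches the order. Concretely, I would first establish the local-to-global count: for each quadratic order $\O$, the number of nontrivial $3$-torsion elements in $\Cl(\O)$ is related, via reflection/class field theory, to cubic rings $K$ with $D(K)$ containing $\O$, i.e.\ precisely the objects being weighted by $C(R)$ in~\eqref{Cdef}. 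The imaginary-versus-real dichotomy enters through the sign of the discriminant and the rank of the unit group, producing the factor $\frac13$ in case~(b).

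Next I would set up the counting of the relevant cubic rings ordered by discriminant, restricted to the family $\Sigma$. The main analytic input is an asymptotic count of binary cubic forms (equivalently cubic rings) lying in prescribed congruence classes modulo each prime, whose total discriminant matches that of a quadratic order in $\Sigma$. I expect to invoke an equidistribution/tail-estimate result that lets one pass from finitely many local conditions to the full acceptable family, using the acceptability hypothesis (that $\Sigma_p$ contains all maximal orders for large $p$) to control the tail and guarantee convergence of the Euler product defining $M_\Sigma$ in~\eqref{massdef}. The heart of the computation is to show that the leading asymptotic, after dividing by the count of orders in $\Sigma$ up to a given discriminant bound $X$, factors as an Euler product over primes, and that each local factor equals exactly the ratio appearing in~\eqref{massdef}: the numerator $\sum_{R\in\Sigma_p} C(R)/\Disc_p(R)$ records the local density of matching cubic rings, while the denominator $\sum_{R\in\Sigma_p}\frac{1}{2\Disc_p(R)}$ normalizes by the local density of orders in $\Sigma$ themselves.

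The ``$+1$'' in both parts of the statement accounts for the identity element of $\Cl_3(\O)$, which is always present and contributes $1$ to the average independently of $\Sigma$; everything nontrivial is captured by $M_\Sigma$ (resp.\ $\frac13 M_\Sigma$). I would therefore organize the proof as: (i) a purely local computation identifying the weighted cubic-ring count $C(R)$ with the local $3$-torsion contribution at $p$, (ii) a global sieve combining these local counts into the Euler product $M_\Sigma$, and (iii) a separate bookkeeping of the trivial class and the unit-group factor distinguishing the imaginary and real cases.

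The hard part will be step~(ii): controlling the uniformity of the cubic-form count across the infinitely many local conditions defining a general acceptable $\Sigma$, and justifying the interchange of the limit $X\to\infty$ with the infinite product over primes. This requires a quantitative equidistribution statement for cubic rings with a power-saving (or at least summable) error term uniform in the modulus, so that the tail contribution from primes where $\Sigma_p$ is forced to contain all maximal orders is negligible and the Euler product converges to $M_\Sigma$. The delicate reflection-theoretic matching in step~(i)---correctly pairing cubic rings $K$ with quadratic orders $\O\subset D(K)$ while tracking automorphisms and the weighting $1/\#\Aut(K)$---is the second source of technical difficulty, but it is essentially local and should follow from a careful analysis of the correspondence over $\Q_p$.
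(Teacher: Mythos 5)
Your proposal is correct in outline and follows essentially the same route as the paper's proof of this theorem: the ring-class-field duality (Prop.~\ref{rcf}) pairing the nontrivial $3$-torsion classes of $\O$ with cubic fields $K$ satisfying $\Disc(\O)=c^2\Disc(K)$, then counting such cubic fields via the Davenport--Heilbronn/BST density theorem for acceptable families (Thm.~\ref{gencubics}) with tail control both over the conductor $c$ and over large primes, and finally the Euler-product rearrangement identifying the limit with $M_\Sigma$ after dividing by the count of $\Sigma$-orders (Lemma~\ref{dht}). Two bookkeeping points that carrying out the details would force you to correct: the correction factor in the duality is $2$ (conjugate cubic fields, equivalently inverse ideal classes), not any unit-group contribution, and the factor $\frac13$ in the real case arises from the archimedean constants in the cubic-field count ($n_0=6$ versus $n_1=2$, i.e.\ totally real cubic fields are three times rarer), not from the rank of the unit group, which enters only in the paper's dual, binary-cubic-forms proofs (Lemma~\ref{deltalemma} and \S6).
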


If $\Sigma$ is
the set of all orders in Theorem~\ref{gensigmaord}, we recover
Theorem~\ref{thmorders}; if $\Sigma$ is the set of all maximal orders,
we recover Theorem~1. As would be expected, the mean number of 3-torsion 
elements in class groups of quadratic orders depends on which set of 
orders one is taking the average over.   However, a remarkable consequence 
of Theorem~\ref{gensigmaord} is the following generalization of Theorem~1:

\begin{corollary}\label{maxcase}
Suppose one restricts to just those quadratic fields satisfying any
specified set of local conditions at any finite set of primes.  Then,
when these quadratic fields are ordered by their absolute
discriminants:
\begin{itemize}
\item[{\rm (a)}]
The average number of $3$-torsion elements in the class
groups of such imaginary quadratic~fields is~$2$.
\item[{\rm (b)}]
The average number of $3$-torsion elements in the class
groups of such real quadratic fields is~$\textstyle{\frac{4}{3}}$.
\end{itemize}
\end{corollary}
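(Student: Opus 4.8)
The plan is to derive Corollary~\ref{maxcase} as a special case of Theorem~\ref{gensigmaord} by specializing to families of \emph{maximal} orders and then showing that the cubic mass $M_\Sigma$ always equals $1$ for such families. The key observation is that restricting to quadratic fields satisfying local conditions at a finite set of primes $S$ is precisely the situation in which $\Sigma_p$ consists only of maximal quadratic rings over $\Z_p$: for $p\in S$ we take $\Sigma_p$ to be whatever subcollection of maximal rings our local conditions specify, and for $p\notin S$ we take $\Sigma_p$ to be \emph{all} maximal quadratic rings over $\Z_p$. This collection is acceptable, so Theorem~\ref{gensigmaord} applies and gives the averages $1+M_\Sigma$ (imaginary) and $1+\frac13 M_\Sigma$ (real). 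Thus everything reduces to proving $M_\Sigma=1$, after which parts (a) and (b) follow immediately.

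To show $M_\Sigma=1$, I would compute each local factor in the product~\eqref{massdef} separately and verify that it equals $1$ whenever $\Sigma_p$ ranges only over maximal rings. For a fixed prime $p$, the maximal quadratic rings $R$ over $\Z_p$ are the ramified ring and the two unramified \'etale quadratic rings (the field extension and the split algebra $\Z_p\times\Z_p$), each with $\#\Aut(R)=2$ and known discriminant exponent. The numerator of the local factor is $\sum_{R}C(R)/\Disc_p(R)$, which by the definition~\eqref{Cdef} of $C(R)$ is a weighted count of \'etale cubic algebras $K$ over $\Q_p$ with $R\subset D(K)$, sorted by the discriminant of~$R$. The central computation is to show, prime by prime, that this numerator equals the denominator $\sum_{R}1/(2\,\Disc_p(R))$. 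I expect this to follow from the local theory relating cubic algebras to their quadratic resolvents $D(K)$: since $R$ is maximal, the condition $R\subset D(K)$ forces $D(K)=R$ (the maximal ring being the unique quadratic ring of its discriminant containing itself), so the numerator reorganizes into a sum over $R$ of $\frac{1}{\Disc_p(R)}\sum_{K:\,D(K)=R}\frac{1}{\#\Aut(K)}$. The heart of the matter is therefore the local mass identity $\sum_{K:\,D(K)=R}\frac{1}{\#\Aut(K)}=\frac12$ for each maximal $R$.

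The main obstacle will be establishing this local mass identity at each prime, including the subtle primes $p=2$ and $p=3$ where wild ramification complicates the classification of \'etale cubic algebras and their discriminants. I would handle it by invoking the standard mass formula for \'etale cubic algebras over a local field (equivalently, the count of index-$3$ subgroups or of cubic resolvent data), which asserts that for each \'etale quadratic algebra the total mass of cubic algebras with that resolvent is a universal constant independent of $p$; one then checks that this constant is exactly $\frac12$ so as to cancel the $\frac12$ in the denominator coming from $\#\Aut(R)=2$. Once the per-prime factor is confirmed to be $1$ for all $p$, the Euler product $M_\Sigma=\prod_p 1=1$ follows, and the corollary is proved; the finiteness of $S$ guarantees convergence is not an issue since all but finitely many factors are literally $1$.
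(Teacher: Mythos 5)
Your proposal is correct and follows essentially the same route as the paper's own proof (\S 5.4): specialize Theorem~\ref{gensigmaord} to an acceptable family consisting of maximal orders, and show $M_\Sigma=1$ by proving the local identity $C(R)=\sum_{K:\,D(K)=R}1/\#\Aut(K)=\tfrac12$ for every maximal quadratic ring $R$ over $\Z_p$. The only difference is that the paper proves this identity by direct enumeration (the four maximal rings for odd $p$, eight for $p=2$, with ramified cubic algebras excluded because their discriminants have $p$-adic valuation at least $2$), whereas you invoke a ``standard mass formula''; if you do so, take care that it is stated with the integral condition $D(K)=R$ rather than mere equality of resolvent quadratic algebras over $\Q_p$, since at $p=3$ the wildly ramified cyclic cubic algebras have square discriminant, so the coarser mass over a fixed resolvent square class is $\tfrac32$ rather than $\tfrac12$ --- the exclusion of such algebras by the valuation condition is precisely the content of the paper's explicit check.
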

Thus the mean number of 3-torsion elements in class groups of quadratic 
{fields} (i.e., of maximal quadratic orders) remains the
same even when one averages over families of quadratic fields
defined by any desired finite set of local conditions.
  
We turn next to 3-torsion elements in the {\it ideal group} of a 
quadratic order $\O$, i.e., the group $\I(\O)$ of invertible fractional ideals of $\O$, of which the class group
$\Cl(\O)$ is a quotient.  It may come as a surprise that, if a quadratic order
is not maximal, then it is
possible for an {\it ideal} to have order~$3$, i.e., $I$ can be 
a fractional ideal of a
quadratic order $\O$ satisfying $I^3=\O$, but $I\neq \O$.  We first illustrate this phenomenon with 
an example:

\begin{example}\label{ex1}
{\em Let $\O=\Z[\sqrt{-11}]$ and let $I=(2,\frac{1-\sqrt{-11}}{2})$.  Then
$I\subset\O\otimes\Q$ is a fractional ideal of $\O$ and has norm
one.  Since $I^3\subset\O$, and $I$ has norm one, we must
have $I^3=\O$, even though clearly $I\neq\O$.  
Hence $I$ has order 3 in the ideal
group of $\O$. 
It follows, in particular, that 
the {\it ideal class} represented by $I$ also has order 3 in the class group of $\O$!
}\end{example}

Example~\ref{ex1} shows that some elements of the ideal class group can have order 3 simply
because there exists a (non-principal) ideal representing them that has 
order 3 in the ideal group.  This raises 
the question as to how many 3-torsion elements exist in the ideal group on average in
quadratic orders.  For maximal orders, it is easy to show that any
3-torsion element (indeed, any torsion element) in the ideal group
must be simply the trivial ideal.  For all orders, we have the following theorem.

\begin{theorem}\label{sigmaid}
When orders in quadratic fields are ordered by their absolute discriminants,
the average number of $3$-torsion elements in the ideal groups of either imaginary or real quadratic orders is $\displaystyle\frac{\zeta(2)}{\zeta(3)}$.
\end{theorem}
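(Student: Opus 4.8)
The plan is to reduce the computation of $|\I_3(\O)|$ to a purely local problem, to evaluate the resulting local factors, and then to average them by means of an Euler product. First I would observe that the ideal group decomposes as a restricted direct product over the primes, $\I(\O)\cong{\bigoplus_p}'\,\I(\O\otimes\Z_p)$, and that an invertible fractional ideal has order dividing $3$ exactly when it does so at every prime; hence $|\I_3(\O)|=\prod_p|\I_3(\O\otimes\Z_p)|$, a product in which all but finitely many factors equal $1$ (the factor at $p$ is trivial whenever $\O$ is maximal at $p$, i.e.\ whenever $p$ does not divide the conductor). The key local simplification is that each $\O_p:=\O\otimes\Z_p$ is a \emph{local} ring -- even in the split case the two primes of the maximal order contract to a single maximal ideal of $\O_p$ -- so every invertible fractional $\O_p$-ideal is free, $\mathrm{Pic}(\O_p)=0$, and $\I(\O_p)=(K\otimes\Q_p)^*/\O_p^*$. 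Since any $\alpha$ with $\alpha^3$ a unit is itself a unit, and since the free quotient $(K\otimes\Q_p)^*/\O_{K,p}^*$ carries no torsion, this gives
\[
|\I_3(\O_p)|=\bigl|\,(\O_{K,p}^*/\O_p^*)[3]\,\bigr|,
\]
where $\O_{K,p}$ denotes the maximal order of $K\otimes\Q_p$.

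Next I would compute this local factor explicitly as a function of the splitting type of $p$ and the conductor exponent $k=v_p(\mathfrak f)$, using the standard identification $\O_{K,p}^*/\O_p^*\cong(\O_{K,p}/\mathfrak c)^*/(\O_p/\mathfrak c)^*$ with $\mathfrak c=p^k\O_{K,p}$ the conductor ideal. A short case analysis over the split, inert, and ramified types -- with the primes $2$ and $3$ singled out for separate treatment -- determines precisely when this finite abelian group acquires $3$-torsion and of what order; the example $\Z[\sqrt{-11}]$ is the $(p=2,\text{ inert})$ instance of this phenomenon. This is exactly the local input that, in the language of the earlier part of the paper, is encoded by the weighted cubic count $C(\O_p)$: the identity to aim for is that the local $3$-torsion factor agrees with the local cubic mass, which is consistent with the fact (visible from Theorem~\ref{thmorders}(a) and the difference formula) that the sought average of $|\I_3(\O)|$ must equal the cubic mass $M_\Sigma$ of the family of all orders.

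Finally I would carry out the averaging. Because $|\I_3(\O)|$ is a nonnegative multiplicative function of $\Disc(\O)$ and the local behaviour at distinct primes is independent -- discriminants being equidistributed modulo every prime power by the Chinese Remainder Theorem -- the mean value over orders ordered by $|\Disc|$ factors as an Euler product $\prod_p\rho_p$, where $\rho_p$ is the average of the local $3$-torsion factor against the natural $p$-adic density on discriminants (which assigns mass $(1-p^{-1})p^{-c}$ to discriminant valuation $c$, split among square classes). Summing the resulting geometric series in the conductor, one finds $\rho_p=(1-p^{-3})/(1-p^{-2})$ for every prime $p$, and the product telescopes to $\zeta(2)/\zeta(3)$. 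Since neither $\rho_p$ nor the densities involve the archimedean place, the same value emerges for real and for imaginary orders, which is precisely the asserted uniformity.

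I expect the main obstacle to be twofold. The first difficulty is the honest local $3$-torsion computation at the small primes $p=2$ and $p=3$, where the unit filtrations and the $p$-adic logarithm behave irregularly and one must control the full group structure -- not merely the order -- of $(\O_{K,p}/\mathfrak c)^*/(\O_p/\mathfrak c)^*$, matching it against $C(\O_p)$ to obtain the uniform local factor. The second, and more serious, difficulty is the analytic justification that the mean value truly equals the Euler product: this requires a Tauberian argument with uniform control of both the error term and the tail contributed by orders of large conductor, which is exactly where one must invoke the geometry-of-numbers counting machinery already developed for the class-group averages.
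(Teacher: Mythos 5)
Your proposal is correct, and it takes a genuinely different route from the paper. The paper never leaves its binary-cubic-forms framework: Theorem~\ref{reducible} identifies $\I_3(\O)$ with the group $H_{\red}(\O)$ of \emph{reducible} projective classes, and Proposition~\ref{redprop} counts reducible projective integer-matrix forms of bounded reduced discriminant by explicit reduction theory (Gauss-reduced quadratic covariant, resp.\ quadratic factor; a lattice-point count over representatives with $a=0$, $b>0$; and a M\"obius inclusion--exclusion over $n\mid(b,c)$ to impose projectivity), giving $\frac{\zeta(2)}{2\zeta(3)}X+o(X)$ for each sign of the discriminant; dividing by the order count $\frac{X}{2}+o(X)$ of (\ref{disc}) yields $\zeta(2)/\zeta(3)$. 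You instead exploit the fact that $|\I_3(\O)|$ is a purely local, conductor-supported quantity, $\I_3(\O)\cong\bigoplus_p \bigl(\O_{K,p}^\times/\O_p^\times\bigr)[3]$, compute the local factors, and average over pairs (fundamental discriminant, conductor) as an Euler product. I checked your claimed factor $\rho_p=(1-p^{-3})/(1-p^{-2})$ against the local unit computations: it holds for $p>3$ (factor $3$ exactly when $p$ is split with $p\equiv 1 \,(3)$ or inert with $p\equiv 2\,(3)$), for $p=2$ (only the inert case contributes, a factor $3$ for every positive conductor exponent; the ramified classes $D\equiv 8,12\ (16)$ have total density $1/8$), and for $p=3$ (split/inert contribute $3$ only from exponent $2$ on; the ramified quotient is cyclic of order $3^k$ when $\zeta_3\notin K_3$, and is $\mu_3\times\Z/3^{k-1}$ when $K_3=\Q_3(\zeta_3)$, contributing $9$ for exponents $\geq 2$); in each case the weighted average is $(1-p^{-3})/(1-p^{-2})$, and the product telescopes to $\zeta(2)/\zeta(3)$ for real and imaginary orders alike. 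Your observation that these local factors must match the local cubic masses $C(R)$ of (\ref{Cdef}) is also on target; that is exactly how they reappear in \S6.

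Two corrections to your own assessment of the difficulties. First, $\O\otimes\Z_p$ is local precisely when $p$ divides the conductor; in the split maximal case it is $\Z_p\times\Z_p$, which is not local. This does not matter: invertible fractional ideals are still principal there, so $\I(\O_p)\cong(K\otimes\Q_p)^\times/\O_p^\times$ holds in all cases. Second, you substantially overestimate the analytic difficulty: on your route no Tauberian theorem and no geometry of numbers is needed. The local factor is trivial off the conductor and bounded (by $3$ at $p\neq 3$, by $9$ at $p=3$), so $|\I_3(\O)|\ll 3^{\omega(f)}$, and $\sum_f 3^{\omega(f)}/f^2<\infty$; the standard truncation argument (finitely many conductors $f<Y$, each handled by the elementary count of fundamental discriminants in congruence classes, plus this convergent tail) suffices. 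That elementarity is what your approach buys: a self-contained proof of Theorem~\ref{sigmaid} independent of Davenport's counting theorems. What the paper's approach buys is coherence with everything else: its reducible-form count is exactly the cusp contribution complementing the irreducible count of \S3.2, so Theorems~\ref{thmorders} and~\ref{diff} drop out of the same computation, and the argument globalizes to acceptable families in \S6.
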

 
In the case of general sets 
of orders defined by any acceptable set of local conditions, we have
the following generalization of Theorem~\ref{sigmaid}: 

\begin{theorem}\label{gensigmaid}
Let $(\Sigma_p)$ be any acceptable collection of local specifications
as above, and let $\Sigma$ denote the set
of all isomorphism classes of quadratic orders $\O$  such that 
$\O\otimes\Z_p\in\Sigma_p$ for all $p$.  
  Then, when orders in $\Sigma$ are ordered by their absolute
  discriminants:
\begin{itemize}
\item[{\rm (a)}]
The average number of $3$-torsion elements in the ideal
groups of imaginary quadratic orders in $\Sigma$ is~$\displaystyle{M_\Sigma}$.
\item[{\rm (b)}]
The average number of $3$-torsion elements in the ideal
groups of real quadratic orders \linebreak in~$\Sigma$~is~$\displaystyle{M_\Sigma}$.
\end{itemize}
\end{theorem}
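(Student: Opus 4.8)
The plan is to reduce $|\I_3(\O)|$ to a product of local invariants, to average these through a factored Euler product, and to identify the result with $M_\Sigma$ via one explicit local computation; unlike the class-group theorems, no counting of binary cubic forms is needed, since $3$-torsion in the ideal group is concentrated at the conductor. Concretely, write $K_p=\O\otimes\Q_p$ and $\O_p=\O\otimes\Z_p$. Every invertible fractional ideal is locally principal, so $\I(\O)\cong\prod'_p K_p^\times/\O_p^\times$. Each $\O_p$ is semilocal, hence has trivial Picard group, and $K_p^\times/\tilde\O_p^\times$ (with $\tilde\O_p$ the integral closure) is a value group, so torsion-free; therefore the torsion of $K_p^\times/\O_p^\times$ is exactly $\tilde\O_p^\times/\O_p^\times$, and
\[ |\I_3(\O)| \;=\; \prod_p \big|(\tilde\O_p^\times/\O_p^\times)[3]\big|, \]
a finite product over the primes dividing the conductor. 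Each factor depends only on $\O_p$ and not on the archimedean type of $\O$, which already accounts for the imaginary and real averages being equal.

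Next I would average. Quadratic orders correspond bijectively to discriminants $D\equiv0,1\pmod 4$, and ordering by $|\Disc|$ amounts to summing over such $D$; the condition $\O_p\cong R$ is then a congruence condition of density proportional to $1/(\#\Aut(R)\,\Disc_p(R))$. Since $|\I_3(\O)|$ is a bounded multiplicative function of the local data, a standard Tauberian argument (applied within $\Sigma$) shows that the average of $|\I_3(\O)|$ over $\Sigma$ equals
\[ \prod_p\frac{\sum_{R\in\Sigma_p}|\I_3(R)|/(\#\Aut(R)\,\Disc_p(R))}{\sum_{R\in\Sigma_p}1/(\#\Aut(R)\,\Disc_p(R))}. \]
The crux is then the local identity $|\I_3(R)|=\#\Aut(R)\,C(R)=2\,C(R)$ for every order $R$ in an \'etale quadratic $\Q_p$-algebra, with $C(R)$ as in~\eqref{Cdef}. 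Granting it, the $p$-th factor collapses to $\big(\sum_R C(R)/\Disc_p(R)\big)\big/\big(\sum_R 1/(2\Disc_p(R))\big)$, so the product equals $M_\Sigma$ and both parts of Theorem~\ref{gensigmaid} follow simultaneously.

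To prove the local identity I would compute $\tilde R^\times/R^\times$ explicitly from the conductor exponent and the splitting type of $\tilde R$ (split, inert, or ramified), extract its $3$-torsion, and match this against the \'etale cubic algebras $K$ with $R\subseteq D(K)$ weighted by $1/\#\Aut(K)$, the latter count being supplied by local class field theory together with the Delone--Faddeev parametrization of cubic rings by their resolvent quadratic rings. As sanity checks one finds that both sides equal $1$ when $R$ is maximal (so that $|\I_3(R)|=1$ recovers the fact that torsion ideals of maximal orders are trivial), and both equal $3$ for a non-maximal split order at $p\equiv1\pmod3$.

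I expect two obstacles. The conceptual heart is the local identity $|\I_3(R)|=2C(R)$, which ties the elementary unit-index computation to the cubic mass $C(R)$ and must be verified uniformly, including at the wild primes $p=2,3$. The technical heart is the uniform tail estimate underlying the averaging step: because an \emph{acceptable} family may impose infinitely many local conditions, I must bound the contribution of orders that are non-maximal at large primes, so that the Euler product converges and may legitimately be interchanged with the limit in $X$. Here acceptability is exactly what is needed, since $\Sigma_p$ containing all maximal orders for large $p$ forces the $p$-th local factor to be $1+O(p^{-2})$.
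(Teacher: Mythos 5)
Your proposal takes a genuinely different route from the paper's, and its skeleton is sound. The paper never averages $|\I_3(\O)|$ directly: it counts irreducible integer-matrix forms whose triples $(\O,I,\delta)$ have $\O\in\Sigma$ and $I$ invertible, converts that count via Corollary~\ref{hr2} and Theorem~\ref{reducible} into $\sum\bigl(3|\Cl_3(\O)|-|\I_3(\O)|\bigr)$, resp.\ $\sum\bigl(|\Cl_3(\O)|-|\I_3(\O)|\bigr)$, thereby proving Theorem~\ref{diff}, and then obtains Theorem~\ref{gensigmaid} only by subtracting from the class-field-theoretic Theorem~\ref{gensigmaord}. You instead localize the invariant itself: the decomposition $\I_3(\O)\cong\prod_p(\tilde\O_p^\times/\O_p^\times)[3]$ is correct (invertible ideals are locally principal, and $K_p^\times/\tilde\O_p^\times$ is torsion-free), it makes the equality of the real and imaginary averages transparent, and it would replace the form-counting of Section~6 by the elementary order count of Lemma~\ref{dht}. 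If completed, this would prove Theorem~\ref{gensigmaid} independently of both class field theory and Section~6, and together with Theorem~\ref{diff} it would even yield a third proof of Theorem~\ref{gensigmaord}.

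Two of your steps, however, are currently gaps rather than proofs. The smaller one concerns the averaging: $|\I_3(\O)|$ is \emph{not} bounded (if the conductor $f$ is a product of $\omega(f)$ primes $\equiv 1\pmod 3$ that split in $\O\otimes\Q$, then $|\I_3(\O)|=3^{\omega(f)}$), so ``bounded multiplicative function'' is false as stated; the interchange of the limit in $X$ with the Euler product must instead rest on a uniform tail bound such as $\sum_{|\Disc(\O)|<X,\;p\mid f(\O)}|\I_3(\O)|=O(X/p^2)$, which does hold by an elementary divisor-type estimate (each local factor is absolutely bounded, and $\sum_{f:\,p\mid f}C^{\omega(f)}f^{-2}=O(p^{-2})$), but this needs to be stated and proved in place of the boundedness claim. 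The essential gap is the local identity $|\I_3(R)|=2\,C(R)$: in your approach the entire arithmetic content of the theorem is concentrated in this one statement, and you assert it with only two tame sanity checks. The identity is true, but it is far from routine at the wild prime: for $p=3$ one can have $(\tilde R^\times/R^\times)[3]\cong C_3\times C_3$ --- e.g.\ for the conductor-$9$ order $R$ in $\Z_3[\zeta_3]$, where $|\I_3(R)|=9$ --- and verifying $C(R)=9/2$ there requires knowing precisely how the discriminants $-3^3$ and $-3^5$ are distributed among the four non-Galois cubic extensions of $\Q_3$ with quadratic resolvent field $\Q_3(\sqrt{-3})$; by contrast, for the conductor-$9$ order in $\Z_3[\sqrt{3}]$ the corresponding unit quotient is cyclic of order $9$ and $|\I_3(R)|=3$. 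So the wild cases exhibit genuinely different structure from the tame ones, and a uniform proof of the identity is a substantive local computation (alternatively, it can be extracted from Theorems~\ref{gensigmaord} and~\ref{diff} by varying $\Sigma$ prime by prime --- but that would be circular for your purposes). Until that identity is established, your proposal defers rather than proves its central step.
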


In the preceding theorems, we have distinguished between the two
groups $\Cl_3(\O)$, the group of {\it ideal classes of order~$3$},
and $\I_3(\O)$, the group of {\it ideals of order~$3$}.
Theorems~\ref{gensigmaord} and \ref{gensigmaid} give the mean values
of $|\Cl_3(\O)|$ and $|\I_3(\O)|$ respectively, as $\O$ ranges over any 
family of orders defined by local conditions.  In both
Theorems~\ref{gensigmaord} and \ref{gensigmaid}, we have seen that 
unless the family consists entirely of maximal orders satisfying a finite number 
of local conditions, these averages depend on the particular families  
of orders over which the averages are taken over. However, we see that 
these two theorems together imply:

\begin{theorem}\label{diff}
Let $(\Sigma_p)$ be any acceptable collection of local specifications
as above, and let $\Sigma$ denote the set
of all isomorphism classes of quadratic orders $\O$  such that 
$\O\otimes\Z_p\in\Sigma_p$ for all $p$.  
  Then, when orders in $\Sigma$ are ordered by their absolute
  discriminants:
\begin{itemize}
\item[{\rm (a)}]
The mean size of 
$|\Cl_3(\O)|-|\I_3(\O)|$ across imaginary quadratic orders $\O$ 
in $\Sigma$ is $1$.
\item[{\rm (b)}]
The mean size of $|\Cl_3(\O)|-\frac13|\I_3(\O)|$ across real 
quadratic orders $\O$ in $\Sigma$ is $1$.
\end{itemize}
\end{theorem}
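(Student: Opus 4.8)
The plan is to deduce Theorem~\ref{diff} directly from Theorems~\ref{gensigmaord} and~\ref{gensigmaid} via linearity of the averaging operation. First I would recall that, for any arithmetic function $g$ defined on orders, the ``mean value of $g(\O)$ over $\Sigma$'' means the limit
\[
\lim_{X\to\infty}\frac{\sum_{\O\in\Sigma,\ |\Disc(\O)|<X} g(\O)}{\#\{\O\in\Sigma:|\Disc(\O)|<X\}}.
\]
The crucial structural point is that in both Theorems~\ref{gensigmaord} and~\ref{gensigmaid} this limit is taken over the \emph{same} family $\Sigma$, with the \emph{same} ordering by absolute discriminant, and hence against an identical normalizing denominator. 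This is what makes the two results directly combinable.

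For the imaginary case, Theorem~\ref{gensigmaord}(a) asserts that the mean of $|\Cl_3(\O)|$ equals $1+M_\Sigma$, while Theorem~\ref{gensigmaid}(a) asserts that the mean of $|\I_3(\O)|$ equals $M_\Sigma$. Since both limits exist and share a common denominator, the numerator attached to the function $\O\mapsto|\Cl_3(\O)|-|\I_3(\O)|$ is exactly the difference of the two individual numerators; dividing by the common count and passing to the limit, the mean of the difference equals the difference of the means, namely $(1+M_\Sigma)-M_\Sigma=1$. The real case is identical save for weighting the ideal-group term by $\tfrac13$: Theorem~\ref{gensigmaord}(b) gives mean $|\Cl_3(\O)|=1+\tfrac13 M_\Sigma$ and Theorem~\ref{gensigmaid}(b) gives mean $|\I_3(\O)|=M_\Sigma$, so the mean of $|\Cl_3(\O)|-\tfrac13|\I_3(\O)|$ is $(1+\tfrac13 M_\Sigma)-\tfrac13 M_\Sigma=1$.

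The only point requiring genuine care—and essentially the sole obstacle—is justifying the interchange ``mean of a difference $=$ difference of means.'' This is not automatic for limits of ratios in general, but it is valid here precisely because both defining limits are already known to exist, by the two cited theorems, and are computed with an identical numerator index set and an identical denominator; under these hypotheses the algebra of convergent limits applies term by term with no further analytic input. I would emphasize that the striking feature of the conclusion is that the family-dependent cubic mass $M_\Sigma$ cancels completely, so the answer $1$ is wholly independent of $\Sigma$—a fact that, although it falls out formally once the two averages are known, is genuinely surprising given that each of the means of $|\Cl_3(\O)|$ and $|\I_3(\O)|$ individually varies with the choice of family.
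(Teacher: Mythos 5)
Your algebra is sound as far as it goes: for each $X$ the sums defining the numerators are finite, so the numerator attached to $|\Cl_3(\O)|-|\I_3(\O)|$ (resp.\ $|\Cl_3(\O)|-\frac13|\I_3(\O)|$) is the corresponding combination of the individual numerators, and since both cited means exist against the same denominator, the mean of the combination is the combination of the means. The problem is one of your two inputs. In this paper, Theorem~\ref{gensigmaid} has \emph{no proof independent of Theorem~\ref{diff}}: the logical order in Section~6 is exactly the reverse of yours. The paper first proves Theorem~\ref{diff} directly, and only afterwards deduces Theorem~\ref{gensigmaid} by combining Theorem~\ref{diff} with Theorem~\ref{gensigmaord} (see the end of Section~6: ``yielding Theorem~\ref{diff}. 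In conjunction with Theorem~\ref{gensigmaord}, we also then obtain Theorem~\ref{gensigmaid}.''). So your derivation of Theorem~\ref{diff} from Theorems~\ref{gensigmaord} and~\ref{gensigmaid} is circular relative to the paper: the only available proof of one of your hypotheses already contains the conclusion. (You were perhaps misled by the introduction, which rhetorically presents Theorem~\ref{diff} as a consequence of the other two; but that is a statement about how the results fit together once all are proved, not the actual proof structure.)

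The paper's actual proof of Theorem~\ref{diff} is a direct count, and it explains why the combinations $|\Cl_3(\O)|-|\I_3(\O)|$ and $|\Cl_3(\O)|-\frac13|\I_3(\O)|$ are the natural quantities. One forms the set $S(\Sigma,i)$ of \emph{irreducible} integer-matrix binary cubic forms whose triple $(\O,I,\delta)$ has $\O\in\Sigma$ and $I$ invertible, and computes $\lim_{X\to\infty}N^\ast(S(\Sigma,i),X)/X$ via the counting theorem extended to infinitely many congruence conditions (\cite[Thm.~31]{BST}, using Proposition~\ref{unifest} for uniformity), together with the $\Z_p$-stabilizer description from Corollary~\ref{gl2bijection} and Lemma~\ref{weird}. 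The key point is that, by Corollary~\ref{hr2} and Theorem~\ref{reducible}, restricting to irreducible orbits subtracts exactly the reducible ones, which biject with $\I_3(\O)$; hence $N^\ast(S(\Sigma,i),X)$ equals $\sum\bigl(3|\Cl_3(\O)|-|\I_3(\O)|\bigr)$ in the real case and $\sum\bigl(|\Cl_3(\O)|-|\I_3(\O)|\bigr)$ in the imaginary case. Dividing by the count of $\Sigma$-orders (Lemma~\ref{dht}), the local masses cancel identically and the limit is $1$, with no appeal to either Theorem~\ref{gensigmaord} or Theorem~\ref{gensigmaid}. To salvage your approach you would need an independent proof of Theorem~\ref{gensigmaid}---for instance by generalizing the reducible-form count of Proposition~\ref{redprop} to arbitrary acceptable families---but that is genuinely new work, not a citation.
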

It is a remarkable fact, which begs for explanation, that the mean
values in Theorem~\ref{diff} do not depend on the family of orders
that one averages over!  In particular, the case of maximal orders
gives Corollary~\ref{maxcase}, because
the only 3-torsion element of the ideal group in a maximal order is the trivial ideal.

We end this introduction by describing the methods used in this paper.  Our approach combines the original methods of Davenport-Heilbronn
with techniques that are class-field-theoretically ``dual'' to those methods, which we explain now. First, recall that Davenport-Heilbronn proved Theorem 1 in~\cite{DH} by: 
\begin{enumerate}
\item[1)] counting appropriate sets of binary cubic forms to compute the number of cubic fields
of bounded discriminant, using a bijection (due to Delone and
Faddeev~\cite{DF}) between irreducible binary cubic forms and cubic
orders; 
\item[2)] applying a duality from class field theory between cubic fields and 3-torsion elements of class groups of
quadratic fields.  
\end{enumerate}

In Sections 2 and 3, we give a new proof of Theorem~1 without class
field theory, by using a direct correspondence between binary cubic
forms and 3-torsion elements of class groups of quadratic orders
proved in~\cite{Bhargava2}, in place of the Delone-Faddeev
correspondence.
We describe a very precise version of this correspondence in Section~2
(cf.~Thm.~\ref{bcfideal}).  In Section~3, we then show how 
the original counting results of Davenport~\cite{Davenport1,Davenport2}---as
applied in the asymptotic count of cubic fields in 
Davenport-Heilbronn~\cite{DH}---can also be used to extract Theorem~1, using the direct 
correspondence between integral binary cubic forms and $3$-torsion elements of class groups of quadratic orders.

To fully illustrate the duality between the original strategy of \cite{DH} and our strategy described above, we give two ``dual'' proofs of Theorem~2. In Section~4, we first generalize the proof of Theorem~1 given in Sections~2 and~3, and then in Section~5, we give a second proof of Theorem~2 via
ring class field theory, generalizing the original proof of Davenport--Heilbronn \cite{DH}.  Both methods involve counting \emph{irreducible} binary cubic forms
in fundamental domains for the action of either $\SL_2(\Z)$ or $\GL_2(\Z)$, as developed in the work
of Davenport~\cite{Davenport1,Davenport2}. However, in our direct method described in Section~4, one must also count points in the ``cusps'' of these fundamental regions!  The
points in the so-called cusp correspond essentially to reducible cubic forms.
We find that reducible cubic forms correspond to 3-torsion elements of
\emph{ideal groups} of quadratic orders (cf.\ Thm.~\ref{reducible}). In
the case of maximal orders, the only torsion element of the ideal
group is the identity,
and thus the points
in the cusps can be ignored 
when proving Theorem~1.  
However, in order to prove Theorems~2 and~3 (which do not restrict to
maximal orders), we must include reducible forms in our counts, and
this is the main goal of Section~4. Isolating the count of 
reducible forms in the fundamental domain for the action of
$\SL_2(\bZ)$ is also what allows us to deduce Theorem~\ref{sigmaid}.

On the other hand, in Section~5, we describe the duality between
nontrivial 3-torsion elements of class groups of a given quadratic
order and cubic fields whose Galois closure is a ring class field of
the fraction field of the quadratic order (cf.\ Prop.~\ref{rcf}).  To then count 3-torsion
elements in the class groups of quadratic orders, we
use the count of cubic fields of bounded discriminant proved by
Davenport--Heilbronn~\cite{DH},
but we allow a
given cubic field to be counted multiple times, as the Galois closure
of a single cubic field can be viewed as the ring class field (of
varying conductor) of multiple quadratic orders (cf.\ \S5.2). This
yields a second proof of Theorem~2; furthermore, it allows us to prove also 
Theorem~3 and Corollary~4, using a generalization of
Davenport and Heilbronn's theorem on the density of discriminants of cubic fields
established in \cite[Thm.~8]{BST}, which counts cubic
orders of bounded discriminant satisfying any acceptable collection of
local conditions.

Finally, in Section~6, we generalize the proof of Theorem~2 given in
Section~3 to general acceptable families of quadratic orders, 
which in combination with Theorem~\ref{gensigmaord} allows
us to deduce Theorems~\ref{gensigmaid} and~\ref{diff}. We note
that, in order to conclude Theorem~\ref{gensigmaid}, we use both of the
``dual'' perspectives provided in the two proofs of Theorem~2.

\section{Parametrization of order 3 ideal classes in quadratic orders}

In this section we recall the parametrization of elements in the
3-torsion subgroups of ideal class groups of quadratic orders in terms
of (orbits of) certain integer-matrix binary cubic forms as proven in
\cite{Bhargava2}. We also deduce various relevant facts that will
allow us to prove Theorems~1 and~2 
in \S3 and \S4, respectively, without using class field theory.

\subsection{Binary cubic forms and 3-torsion elements in class groups}

The key ingredient in the new proofs of Theorems~1 and~2 is a
parametrization of ideal classes of order~3 in quadratic rings by
means of equivalence classes of integer-matrix binary cubic forms,
which was obtained in~\cite{Bhargava2}.  We begin by briefly recalling this
parametrization.

Let $V_\bR$ denote the four-dimensional real vector space of binary cubic
forms $ax^3+bx^2y+cxy^2+dy^3$ where $a,b,c,d\in\R$, and let $V_\Z$ denote
the lattice of those forms for which $a,b,c,d\in\Z$ (i.e., the {\it
  integer-coefficient} binary cubic forms).  The group $\GL_2(\Z)$
acts on $V_\bR$ by the so-called ``twisted action,'' i.e., an element $\gamma \in \GL_2(\Z)$ acts on a binary cubic form $f(x,y)$ by
\begin{equation}
	(\gamma f)(x,y) := \frac{1}{\det(\gamma)}f((x,y)\gamma).
\end{equation}
Furthermore, the action preserves $V_\Z$.
We will be interested in the sublattice of binary
cubic forms of the form $f(x,y)=ax^3+3bx^2y+3cxy^2+dy^3$, called {\it classically
integral} or {\it integer-matrix} if $a,b,c,d$ are integral.  We
denote the lattice of all integer-matrix forms in $V_\bR$ by $V_\Z^\ast$.
Note that $V_\Z^\ast$ has index~9 in $V_\Z$ and is also preserved by $\GL_2(\bZ)$.
We also define the {\it reduced  discriminant} $\disc(\cdot)$ on $V_\Z^\ast$ by
\begin{equation}\label{discdef}
\disc(f) := -\frac{1}{27}\Disc(f) = -3b^2c^2 + 4ac^3 + 4b^3d + a^2d^2 - 6abcd
\end{equation}
where $\Disc(f)$ denotes the usual discriminant of $f$ as an element
of $V_\Z$. It is well-known and easy to check that the action of $\GL_2(\bZ)$ on binary cubic forms preserves (both definitions of) the discriminant.  

In \cite{Eisenstein}, Eisenstein proved a beautiful correspondence
between certain special $\SL_2(\Z)$-classes in $V_\Z^\ast$ and ideal classes
of order 3 in quadratic rings. 
We state here a refinement of Eisenstein's correspondence
obtained in~\cite{Bhargava2}, which gives an
exact interpretation for {\it all} $\SL_2(\Z)$-classes in $V_\Z^\ast$
in terms of ideal classes in quadratic rings.  

To state the theorem, we first require some terminology.  We define a
{\it quadratic ring} over~$\bZ$ (resp.~$\bZ_p$) to be any commutative
ring with unit that is free of rank 2 as a $\Z$-module
(resp. $\bZ_p$-module). An {\it oriented} quadratic ring $\cO$ over
$\bZ$ is then defined to be a quadratic ring along with a specific
choice of isomorphism $\pi: \O/\Z \rightarrow \Z$. Note that an
oriented quadratic ring has no nontrivial automorphisms. Finally, we
say that a quadratic ring (or binary cubic form) is {\it
  nondegenerate} if it has nonzero discriminant.

\begin{theorem}[{\bf \cite[Thm.~13]{Bhargava2}}]\label{bcfideal}  
  There is a natural bijection between the set of nondegenerate
  $\SL_2(\Z)$-orbits on the space $V_\Z^\ast$ of integer-matrix binary
  cubic forms and the set of equivalence classes of triples
  $(\O,I,\delta)$, where $\O$ is a nondegenerate oriented quadratic
  ring over $\bZ$, $I$ is an ideal of $\O$, and $\delta$ is an invertible element
  of $\O\otimes\Q$ such that $I^3\subseteq \delta\cdot \O$ and
  $N(I)^3=N(\delta)$.  $($Here two triples $(\O,I,\delta)$ and
  $(\O',I',\delta')$ are equivalent if there is an isomorphism
  $\phi:\O\rightarrow \O'$ and an element $\kappa\in \O'\otimes\Q$ such
  that $I'=\kappa\phi(I)$ and $\delta'=\kappa^3\phi(\delta)$.$)$  Under
  this bijection, the reduced discriminant of a binary cubic form is equal to
  the discriminant of the corresponding quadratic ring.
\end{theorem}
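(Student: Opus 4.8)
The plan is to construct the bijection explicitly in both directions and then verify compatibility with the two equivalence relations. I would begin with the forward map, sending a triple $(\O,I,\delta)$ to a binary cubic form. Choose a $\Z$-basis $I=\Z\alpha\oplus\Z\beta$ and consider the generic element $\xi=\alpha x+\beta y\in I\otimes\Z[x,y]$. Its cube $\xi^3$ lies in $I^3\otimes\Z[x,y]$, and since $I^3\subseteq\delta\O$ we may divide by $\delta$ to land in $\O\otimes\Z[x,y]$; applying the orientation projection $\O\to\O/\Z\xrightarrow{\pi}\Z$ coefficientwise then yields a binary cubic form
\begin{equation*}
f(x,y):=\pi\bigl(\delta^{-1}(\alpha x+\beta y)^3\bigr)\in\Z[x,y].
\end{equation*}
Expanding, the coefficients of $x^2y$ and $xy^2$ acquire a factor of $3$ from the multinomial expansion, so $f$ is automatically integer-matrix, i.e.\ $f\in V_\Z^\ast$; each coefficient $\pi(\delta^{-1}\alpha^i\beta^{3-i})$ is an integer because $\delta^{-1}I^3\subseteq\O$. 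The remaining routine checks are (i) that $\disc(f)$ equals $\disc(\O)$, where the norm condition $N(I)^3=N(\delta)$ is exactly what forces the discriminant to come out to $\disc(\O)$ rather than a square multiple of it, and (ii) that nondegeneracy of $\O$ guarantees $\delta$ is invertible and $\O\otimes\Q$ is \'etale, so that the whole construction is well defined.

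Next I would check that this map descends to equivalence classes and is $\SL_2(\Z)$-equivariant. Replacing $(\alpha,\beta)$ by $(\alpha,\beta)\gamma$ for $\gamma\in\SL_2(\Z)$ replaces $f$ by its image under the twisted action (the $\det$ factor being trivial since $\det\gamma=1$); the reason one obtains $\SL_2(\Z)$-orbits rather than $\GL_2(\Z)$-orbits is precisely that the orientation $\pi$ is fixed, so only orientation-preserving basis changes are permitted. The scaling equivalence $I'=\kappa I$, $\delta'=\kappa^3\delta$ leaves $f$ unchanged, since $\delta'^{-1}(\kappa\xi)^3=\delta^{-1}\xi^3$, and an orientation-preserving isomorphism $\phi:\O\to\O'$ carries the entire construction along. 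Thus we obtain a well-defined map from equivalence classes of triples to nondegenerate $\SL_2(\Z)$-orbits on $V_\Z^\ast$, together with the asserted discriminant identity.

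The heart of the proof---and the step I expect to be the main obstacle---is the construction of the inverse map and the verification that the two maps are mutually inverse. Given a nondegenerate $f\in V_\Z^\ast$, the quadratic ring is forced: $\O$ is the oriented quadratic ring of discriminant $\disc(f)$. The difficulty is to recover the ideal $I$ and the element $\delta$. I would realize $I$ as the lattice $\Z^2$ underlying $f$ and reconstruct the $\O$-module structure by reading off the multiplication-by-$\tau$ endomorphism $T\in\End_\Z(\Z^2)$ directly from the coefficients of $f$, generalizing Eisenstein's classical formulas: the symmetric trilinear polarization of $f$ supplies the $\O/\Z$-component of the cube multiplication $I\times I\times I\to\O$, and requiring this multiplication to be $\O$-balanced pins down the remaining $\Z$-component, hence $T$. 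One must then verify that $T$ is integral with the correct characteristic polynomial (so that $\Z^2$ becomes a faithful rank-$2$ $\O$-module, i.e.\ a fractional $\O$-ideal), that the resulting $\delta$ satisfies $I^3\subseteq\delta\O$ with $N(I)^3=N(\delta)$, and that applying the forward map returns $f$. The subtle part is handling all orbits uniformly, including those corresponding to \emph{non-invertible} ideals $I$ (the non-projective case, where $I^3=\delta\O$ need not hold as an equality of invertible ideals and the norm multiplicativity used above fails); this is exactly the range in which the refinement of Eisenstein's correspondence is nontrivial, and where I would expect to spend the most care. Injectivity then follows from the uniqueness of this reconstruction up to the allowed equivalences, completing the bijection.
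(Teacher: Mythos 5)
Your proposal is correct and takes essentially the same approach as the paper: your forward map $f(x,y)=\pi\bigl(\delta^{-1}(\alpha x+\beta y)^3\bigr)$ is exactly the paper's construction in (\ref{bcfdef}) and (\ref{cf}), and your inverse-map strategy of recovering $\O$ from $\disc(f)$ and reconstructing the $\O$-module structure of the underlying lattice from the coefficients of $f$ is precisely what the paper does via its explicit formulas for $e_1$, $e_2$ and the $\tau$-action given by (\ref{defABC}). Note that the paper itself gives only this sketch and defers the full verification, including the non-invertible case you rightly flag as the delicate one, to \cite[\S3.4]{Bhargava2}.
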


The proof of this statement can be found in~\cite[\S3.4]{Bhargava2}; here we simply sketch the map. Given a triple $(\O, I, \delta)$, we construct the corresponding binary cubic form as follows: Write $\O = \Z + \Z\tau$ where $\langle 1, \tau\rangle$ is a {\it positively oriented} basis for an oriented quadratic ring, i.e., $\pi(\tau) = 1$. Furthermore, we can write $I = \Z \alpha + \Z \beta$ when $\langle \alpha, \beta \rangle$ is a {\it positively oriented} basis for a $\Z$-submodule of $\cO \otimes \bQ$, i.e., the change-of-basis matrix from the positively-oriented $\langle 1, \tau\rangle$ to $\langle \alpha, \beta \rangle$ has positive determinant. We can then find integers $e_0$, $e_1$, $e_2$, $e_3$, $a$, $b$, $c$, and $d$ satisfying the following equations: 
\begin{equation}\label{bcfdef}
\begin{array}{rcl}
	\alpha^3 &=& \delta(e_0 + a \tau), \\
	\alpha^2\beta &=& \delta(e_1 + b \tau), \\
	\alpha\beta^2 &=& \delta(e_2 + c \tau), \\
	\beta^3 &=& \delta(e_3 + d \tau).
\end{array}
\end{equation}
Then the binary cubic form corresponding to the triple $(\O, I, \delta)$ is $f(x,y) = ax^3 + 3bx^2y + 3cxy^2 + dy^3$.  In basis-free terms, $f$ is the symmetric trilinear form 
\begin{equation}\label{cf}
I\times I\times I\to \Z \qquad \qquad (i_1,i_2,i_3) \mapsto \pi(\delta^{-1}\cdot i_1\cdot i_2\cdot i_3)
\end{equation}
given by applying multiplication in $\O$, dividing by $\delta$, and then applying $\pi$.

On the other hand, given a binary cubic form $f(x,y) = ax^3 + 3bx^2y +
3cxy^2 + dy^3$, we can explicitly construct the corresponding triple
as follows. The ring $\O$ is completely determined by having discriminant equal to $\disc(f)$. Examining the system of equations in (\ref{bcfdef}) shows that a positively oriented basis $\langle \alpha, \beta\rangle$ for $I$  must satisfy
	$$\alpha:\beta = (e_1 + b \tau): (e_2 + c \tau)$$
where 
\begin{equation}
	e_1 = \displaystyle \frac{1}{2}(b^2c - 2ac^2 + abd - \epsilon b), \quad \mbox{and} \quad
	e_2 = \displaystyle -\frac{1}{2}(bc^2 - 2b^2d + acd + \epsilon c).
\end{equation}
Here, $\epsilon = 0$ or $1$ in accordance with whether $\Disc(\cO)
\equiv 0$ or $1$ modulo $4$, respectively. This uniquely determines
$\alpha$ and $\beta$ up to a scalar factor in $\cO \otimes \bQ$, and
once $\alpha$ and $\beta$ are fixed, the system in (\ref{bcfdef})
determines $\delta$ uniquely. The $\cO$-ideal structure of the rank 2
$\bZ$-module $I$ is given by the following action of $\tau$ on the basis
elements of $I$: 
	$$\tau \cdot \alpha = \frac{B + \epsilon}{2} \cdot \alpha + A \cdot \beta \qquad \mbox{and} \qquad \tau \cdot \beta = -C\cdot\alpha + \frac{\epsilon - B}{2} \cdot \beta, \quad \mbox{where}$$
\begin{equation}\label{defABC}
	A = b^2 - ac, \quad B = ad - bc, \quad C = c^2 - bd.
\end{equation}
This completely (and explicitly) determines the triple $(\cO,I,\delta)$ from the binary cubic form $f(x,y)$. Note that the equivalence defined on triples in the statement of the theorem exactly corresponds to $\SL_2(\bZ)$-equivalence on the side of binary cubic forms.

We may also deduce from this discussion a description of the stabilizer in $\SL_2(\Z)$ of an element in $V_\Z^\ast$ in terms of the corresponding triple $(\O,I,\delta)$.  

\begin{corollary}\label{stab}  
The stabilizer in $\SL_2(\Z)$ of a
nondegenerate element $v\in V_\Z^\ast$ is naturally isomorphic to~$U_3(\O_0)$,
where $(\O,I,\delta)$ is the triple corresponding
to $v$ as in Theorem~$\ref{bcfideal}$,
$\O_0=\End_\O(I)$ is the endomorphism ring of~$I$, and
$U_3(\O_0)$ denotes the group of units of $\O_0$ having order
dividing $3$.
\end{corollary}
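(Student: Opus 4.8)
The plan is to identify $\Stab_{\SL_2(\Z)}(v)$ with the group of self-equivalences of the triple $(\O,I,\delta)$ attached to $v$ by Theorem~\ref{bcfideal}, and then to read off that this group is $U_3(\O_0)$. Unwinding the construction, a form $v\in V_\Z^\ast$ amounts to the data of a triple $(\O,I,\delta)$ together with a positively oriented basis $\langle\alpha,\beta\rangle$ of $I$, taken up to the scaling $(\O,I,\delta,\alpha,\beta)\mapsto(\O,\kappa I,\kappa^3\delta,\kappa\alpha,\kappa\beta)$ for $\kappa\in(\O\otimes\Q)^\ast$; this scaling leaves $v$ unchanged because $(\kappa^3\delta)^{-1}(\kappa i_1)(\kappa i_2)(\kappa i_3)=\delta^{-1}i_1i_2i_3$ in the trilinear description~\eqref{cf}. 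Under this dictionary the twisted $\SL_2(\Z)$-action on $v$ is realized by the basis change $\langle\alpha,\beta\rangle\mapsto\langle\alpha,\beta\rangle\gamma$, fixing $(\O,I,\delta)$. Thus $\gamma$ stabilizes $v$ exactly when $\langle\alpha,\beta\rangle\gamma$ and $\langle\alpha,\beta\rangle$ differ by a scaling, i.e.\ when there is $\kappa\in(\O\otimes\Q)^\ast$ with $\kappa\langle\alpha,\beta\rangle=\langle\alpha,\beta\rangle\gamma$, $\kappa I=I$, and $\kappa^3\delta=\delta$.

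First I would check that no nontrivial ring isomorphism $\phi$ can intervene: since $\O$ is oriented, its only orientation-preserving automorphism is the identity (conjugation negates $\pi$), so $\phi=\Id$ and every self-equivalence is of the purely scalar type above. Next I would translate the two conditions on $\kappa$: from $\kappa I=I$ we get that $\kappa$ lies in the unit group of the endomorphism ring $\End_\O(I)=\O_0$, and, $\delta$ being invertible, the relation $\kappa^3\delta=\delta$ forces $\kappa^3=1$; hence the admissible scalars $\kappa$ are precisely the elements of $U_3(\O_0)$. Conversely, any $\kappa\in U_3(\O_0)$ acts on $I\cong\Z^2$ by an integral matrix $M_\kappa$ whose determinant is $N(\kappa)$; since $N(\kappa)^3=N(\kappa^3)=1$ and $N(\kappa)=\pm1$, we have $N(\kappa)=1$ and so $M_\kappa\in\SL_2(\Z)$. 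Moreover $M_\kappa$ fixes $v$, because substituting $i_j\mapsto\kappa i_j$ in~\eqref{cf} replaces $\delta^{-1}i_1i_2i_3$ by $\delta^{-1}\kappa^3 i_1i_2i_3=\delta^{-1}i_1i_2i_3$. The assignment $\kappa\mapsto M_\kappa$ is a homomorphism, and it is injective because the action of $\O_0^\ast$ on the rank-one module $I$, which spans $\O\otimes\Q$, is faithful. The two implications together show that its image is exactly $\Stab_{\SL_2(\Z)}(v)$, yielding the natural isomorphism $\Stab_{\SL_2(\Z)}(v)\cong U_3(\O_0)$.

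The main obstacle is one of careful bookkeeping rather than of any deep new input: one must keep straight the two different group actions in play---the twisted $\SL_2(\Z)$-action on forms, realized as a change of basis of $I$, versus the scaling equivalence on triples---and verify that fixing $v$ is genuinely equivalent to the existence of a scalar $\kappa$ implementing $\gamma$. The accompanying technical checks, namely that the orientation kills the conjugation automorphism so that $\phi=\Id$, and that $N(\kappa)=1$ places $M_\kappa$ in $\SL_2(\Z)$ rather than merely $\GL_2(\Z)$, are what make the identification precise.
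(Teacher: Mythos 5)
Your proof is correct and follows essentially the same route as the paper's: the paper's (very terse) argument likewise identifies an element of $\Stab_{\SL_2(\Z)}(v)$, viewed as a change of basis of $I$, with multiplication by a cube root of unity in $\O_0=\End_\O(I)$, which is exactly your scalar $\kappa$ with $\kappa I=I$ and $\kappa^3\delta=\delta$. Your write-up simply makes explicit the bookkeeping (the triple-with-basis dictionary, the determinant computation $N(\kappa)=1$, and injectivity) that the paper leaves to the reader.
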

Indeed, let $v \in V_\bZ^\ast$ be associated to the triple $(\cO, I,
\delta)$ under Theorem~$\ref{bcfideal}$. Then an
$\SL_2(\bZ)$-transformation of the basis $\langle \alpha, \beta
\rangle$ for $I$ preserves the map in (\ref{cf}) precisely when
$\gamma$ acts by multiplication by a cube root of unity in the
endomorphism ring $\O_0$ of $I$.

We may also similarly describe the orbits of $V_\Z^\ast$ under the action of $\GL_2(\Z)$. This simply 
removes the orientation of the corresponding ring $\O$, thus
 identifying the triple $(\O,I,\delta)$ with its quadratic conjugate triple $(\O,\bar I,\bar\delta)$.  

\begin{corollary}\label{gl2bijection}
There is a natural bijection between the set of nondegenerate $\GL_2(\bZ)$-orbits on the space $V_{\bZ}^\ast$ of integer-matrix binary cubic forms and the set of equivalence classes of triples $(\cO, I, \delta)$ where $\cO$ is a nondegenerate $($unoriented$)$ quadratic ring, $I$ is an ideal of $\cO$, and $\delta$ is an invertible element of $\cO \otimes \bQ$ such that $I^3 \subseteq \delta \cdot \cO$ and $N(I)^3 = N(\delta)$. 
 Under this bijection, the reduced discriminant of a binary cubic form is equal to
  the discriminant of the corresponding quadratic ring.
 The stabilizer in $\GL_2(\Z)$ of a
nondegenerate element $v\in V_\Z^\ast$ is given by the 
semidirect product
\[\Aut(\O;I,\delta)\ltimes U_3(\O_0),\] 
where: $(\O,I,\delta)$ is the triple corresponding
to $v$ as in Theorem~$\ref{bcfideal}$; $\Aut(\O;I,\delta)$ 
is defined to be $C_2$ if there exists $\kappa\in(\O\otimes\Q)^\times$ such that 
$\bar I=\kappa I$ and $\bar \delta=\kappa^3\delta$, and 
is defined to be trivial otherwise; 
$\O_0=\End_\O(I)$ is the endomorphism ring of $I$; and
$U_3(\O_0)$ denotes the group of units of $\O_0$ having order
dividing $3$.
\end{corollary}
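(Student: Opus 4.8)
The plan is to deduce the corollary from Theorem~\ref{bcfideal} and Corollary~\ref{stab} by analyzing the action of the quotient $\GL_2(\Z)/\SL_2(\Z)\cong C_2$. First I would fix a representative $\sigma\in\GL_2(\Z)$ with $\det\sigma=-1$, so that $\GL_2(\Z)=\SL_2(\Z)\sqcup\sigma\SL_2(\Z)$; then every $\GL_2(\Z)$-orbit on $V_\Z^\ast$ is the union of the $\SL_2(\Z)$-orbit of $v$ and that of $\sigma v$. The crux of the argument is to identify, under the bijection of Theorem~\ref{bcfideal}, the $\SL_2(\Z)$-orbit of $\sigma v$ in terms of the oriented triple $(\O,I,\delta)$ attached to $v$: I claim it is the quadratic conjugate triple $(\O,\bar I,\bar\delta)$.

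To establish this claim I would work with the basis-free description~(\ref{cf}): the form attached to $(\O,I,\delta)$ is $(i_1,i_2,i_3)\mapsto\pi(\delta^{-1}i_1i_2i_3)$, where $\pi\colon\O/\Z\to\Z$ is the chosen orientation. Applying a transformation of determinant $-1$ reverses the orientation of the chosen basis of $I$, and the factor $\det(\gamma)^{-1}$ in the twisted action is precisely what records this reversal; the net effect on the attached data is to replace the orientation $\pi$ of $\O$ by $-\pi$. Since the conjugation map $c\colon x\mapsto\bar x$ on $\O$ satisfies $\pi\circ c=-\pi$ on $\O/\Z$, it furnishes an equivalence in the sense of Theorem~\ref{bcfideal} carrying the orientation-reversed triple $(\O,-\pi,I,\delta)$ to the conjugate triple $(\O,\pi,\bar I,\bar\delta)$. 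Hence the two $\SL_2(\Z)$-orbits comprising a $\GL_2(\Z)$-orbit correspond exactly to an oriented triple and its conjugate, and forgetting the orientation identifies these two; this yields the claimed bijection between nondegenerate $\GL_2(\Z)$-orbits and equivalence classes of unoriented triples. The discriminant statement is inherited verbatim from Theorem~\ref{bcfideal}.

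For the stabilizer, I would observe that $\Stab_{\GL_2(\Z)}(v)\cap\SL_2(\Z)=\Stab_{\SL_2(\Z)}(v)=U_3(\O_0)$ by Corollary~\ref{stab}, and that the determinant map sends $\Stab_{\GL_2(\Z)}(v)$ into $C_2$. Its image is all of $C_2$ exactly when some determinant $-1$ element fixes $v$, i.e.\ when $\sigma v$ lies in the $\SL_2(\Z)$-orbit of $v$, i.e.\ (by the previous paragraph) when the conjugate triple is equivalent to $(\O,I,\delta)$ --- which is precisely the condition $\bar I=\kappa I$, $\bar\delta=\kappa^3\delta$ defining $\Aut(\O;I,\delta)=C_2$. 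This produces a short exact sequence $1\to U_3(\O_0)\to\Stab_{\GL_2(\Z)}(v)\to\Aut(\O;I,\delta)\to 1$.

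It remains to split this sequence. I would first check that any determinant $-1$ stabilizing element $\gamma$ acts on $U_3(\O_0)\subset\SL_2(\Z)$ by conjugation as inversion: since $\gamma$ implements the quadratic conjugation $c$, it sends multiplication by a cube root of unity $\omega\in\O_0$ to multiplication by $\bar\omega=\omega^{-1}$, so $\gamma u\gamma^{-1}=u^{-1}$ for all $u\in U_3(\O_0)$. As $U_3(\O_0)$ is either trivial or $C_3$, the element $\gamma^2\in U_3(\O_0)$ satisfies $\gamma^2=\gamma\gamma^2\gamma^{-1}=(\gamma^2)^{-1}$, which forces $\gamma^2=1$; hence $\langle\gamma\rangle\cong C_2$ is a complement and $\Stab_{\GL_2(\Z)}(v)\cong\Aut(\O;I,\delta)\ltimes U_3(\O_0)$, with $\Aut(\O;I,\delta)$ acting by inversion, as claimed. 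I expect the main obstacle to be the second paragraph, namely the careful bookkeeping showing that the determinant $-1$ action corresponds exactly to quadratic conjugation of the triple: one must track the interplay between the reversal of basis orientation, the $\det(\gamma)^{-1}$ twist, and the orientation $\pi$, and verify that these combine to produce conjugation rather than some other involution of the data.
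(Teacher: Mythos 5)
Your proposal is correct and takes essentially the same approach as the paper's proof: both identify each nondegenerate $\GL_2(\Z)$-orbit as the union of two $\SL_2(\Z)$-orbits corresponding to an oriented triple $(\O,I,\delta)$ and its conjugate $(\O,\bar I,\bar\delta)$, and both split the stabilizer computation according to the determinant, with determinant~$1$ elements giving $U_3(\O_0)$ via Corollary~\ref{stab} and determinant~$-1$ elements existing precisely when $\Aut(\O;I,\delta)=C_2$. In fact your write-up supplies two details the paper leaves implicit --- the orientation bookkeeping (via the basis-free form~(\ref{cf}) and the identity $\pi\circ c=-\pi$) showing that a determinant~$-1$ transformation produces the conjugate triple, and the verification that the extension $1\to U_3(\O_0)\to\Stab_{\GL_2(\Z)}(v)\to\Aut(\O;I,\delta)\to 1$ splits because any determinant~$-1$ stabilizing element acts on $U_3(\O_0)$ by inversion and hence squares to the identity.
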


\begin{proof}
  Given Theorem~\ref{bcfideal}, it remains to check where the
  now-combined $\SL_2(\bZ)$-orbits of an integer-matrix binary cubic
  form $f$ and of $\gamma f$ where $\gamma = \left(\begin{smallmatrix}
      0 & 1 \\1 & 0\end{smallmatrix}\right)$ map to. If the
  $\SL_2(\bZ)$-orbit of $f$ corresponds to a triple $(\cO, I, \delta)$
  under the above bijection, then the $\SL_2(\bZ)$-orbit of $\gamma f$
  corresponds to the triple $(\cO, \bar{I}, \bar{\delta})$ where
  $\bar{\cdot}$ denotes the image under the non-trivial automorphism
  of the unoriented quadratic ring $\cO$. Thus we obtain a
  correspondence between $\GL_2(\bZ)$-orbits of integer-matrix binary
  cubic forms and triples $(\cO, I, \delta)$ as described above except
  that $\cO$ is viewed as a quadratic ring without orientation.

For the stabilizer statement, note that an element $g$ of $\GL_2(\Z)$ preserving $v$ must have determinant either
$+1$ or $-1$.  If $g$ has determinant~1, then when it acts on the basis
$\langle\alpha,\beta\rangle$ of $I$, it preserves the vector
$v=(a,b,c,d)$ in (\ref{bcfdef}) if
and only if $\alpha^3,\alpha^2\beta,\alpha\beta^2,\beta^3$ remain
unchanged; thus $g$ must act by multiplication by a unit $u$ in the unit group
$U(\O_0)$ of $\O_0$ whose cube is 1.
If $g$ has determinant $-1$, then the basis element $\tau$
gets replaced by its conjugate $\bar\tau$ in addition to
$\langle\alpha,\beta\rangle$ being transformed by $g$.  If this is to
preserve the vector $v=(a,b,c,d)$ in (\ref{bcfdef}), then this means 
that conjugation on $\O$ maps $I$ to $\kappa I$ 
and $\delta$ to $\kappa^3 \delta$ for some $\kappa\in(\O\otimes\Q)^\times$. 
The result follows.
\end{proof} 
\begin{remark}\label{rmkzp}{\em 
The statements in
Theorem~\ref{bcfideal}, Corollary~\ref{stab}, and Corollary~\ref{gl2bijection} also hold after base change to~$\bZ_p$, with the same proofs. 
In the case of Theorem~\ref{bcfideal}, in the proof, by a {\it
  positively oriented} basis $\langle\alpha,\beta\rangle$ of an ideal
$I$ of $R$,
we mean that the change-of-basis matrix
from  $\langle 1, \tau\rangle$ to $\langle \alpha, \beta \rangle$ has
determinant equal to a power of $p$; all other details remain identical.
Corollary~\ref{gl2bijection} and its
analogue over $\Z_p$ will be relevant in Section~6, during the proofs of
Theorems~\ref{gensigmaid} and \ref{diff}. 
}\end{remark}

\subsection{Composition of cubic forms and 3-class groups} 

Let us say that an integer-matrix binary cubic form $f$, or its corresponding triple $(\O,I,\delta)$ via the correspondence of Theorem~\ref{bcfideal}, is {\it projective} if $I$ is projective as an
$\O$-module (i.e., if $I$ is invertible as an ideal of $\O$); in such a 
case we have $I^3=(\delta)$.  The bijection of Theorem~\ref{bcfideal} allows us to describe a composition law on the set of projective integer-matrix binary cubic forms, up to $\SL_2(\bZ)$-equivalence, having the same reduced discriminant.  This turns the set of all $\SL_2(\Z)$-equivalence classes of projective integer-matrix binary cubic forms having given reduced discriminant~$D$ into a group,
which is closely related to the group $\Cl_3(\cO)$, if $\cO$ also has discriminant $D$. In this section, we describe this group law and establish some of its relevant properties.

Fix an oriented quadratic ring $\O$. 
Given such an $\O$, we obtain a natural law of composition on 
equivalence classes of triples $(\O,I,\delta)$, where $I$ is an invertible ideal of $\cO$ and $\delta \in (\cO \otimes\bQ)^\times$ such that $I^3 = \delta \cdot \cO$ and $N(I)^3 = N(\delta)$. It is defined by
\[
        (\O,I,\delta)\circ(\O,I',\delta') = (\O,II',\delta\delta').
\]
The equivalence classes of projective triples $(\O,I,\delta)$ thus
form a group under this composition law, which we denote by $H(\O)$
(note that two oriented quadratic rings $\cO$ and $\cO'$ of the same
discriminant are canonically isomorphic, and hence the groups $H(\cO)$ and
$H(\cO')$ are also canonically isomorphic).
By Theorem~\ref{bcfideal}, we also then obtain a corresponding
composition law on $\SL_2(\bZ)$-equivalence classes of integer-matrix
cubic forms $f$ having a given reduced discriminant
$D$ 
(a higher degree analogue of Gauss composition).  We say that such a
binary cubic form $f$ is {\it projective} if the corresponding
$(\cO,I,\delta)$ is projective.  We will sometimes view $H(\cO)$ as the
group consisting of the $\SL_2(\bZ)$-equivalence classes of integer-matrix
binary cubic forms having reduced discriminant equal to $\Disc(\cO)$.

In order to understand the relationship between $H(\cO)$ and $\Cl_3(\cO)$, we first establish a lemma describing the number of preimages
of an ideal class 
under the ``forgetful'' map $H(\cO) \rightarrow \Cl_3(\cO)$ defined by 
$(\O,I,\delta) \mapsto [I]$:

\begin{lemma}\label{deltalemma}
  Let $\O$ be an order in a quadratic field and $I$ an invertible
  ideal of $\O$ whose class has order $3$ in the class group of
  $\O$. Then the number of elements $\delta\in\O$ $($up to cube
  factors in $(\O\otimes\Q)^\times)$ yielding a valid triple
  $(\O,I,\delta)$ in the sense of Theorem~$\ref{bcfideal}$ is $1$ if
  $\Disc(\O)<-3$, and $3$ otherwise.
\end{lemma}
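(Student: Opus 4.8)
The plan is to reduce the count to a computation in the unit group of $\O$. Since $I$ is invertible and its class $[I]$ has order $3$ in $\Cl(\O)$, the ideal $I^3$ is principal, say $I^3=(\delta_0)$ with $\delta_0\in\O$. For $(\O,I,\delta)$ to be a valid projective triple we need $\delta\O=I^3$ together with the normalization $N(\delta)=N(I)^3$; thus the admissible $\delta$ are exactly the generators of $I^3$ whose field norm equals the positive rational $N(I)^3$. The generators of $I^3$ form a torsor under $\O^\times$, so writing $\delta=u\delta_0$ the norm condition becomes $N(u)N(\delta_0)=N(I)^3$, which isolates a coset of the subgroup $U:=\{u\in\O^\times:N(u)=1\}$ of norm-one units. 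First I would record that the equivalence on triples fixing $\O$ and $[I]$ identifies $\delta$ with $\kappa^3\delta$; to preserve both $\delta\O=I^3$ and the norm normalization this forces $N(\kappa)=1$, i.e. $\kappa\in U$, so that the number of admissible $\delta$ up to equivalence is the index $|U/U^3|$, where $U^3=\{u^3:u\in U\}$.

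The one point requiring care is nonemptiness of the coset of admissible $\delta$, i.e. the existence of at least one valid triple. In the imaginary case this is immediate, since $N(\delta_0)=|N(\delta_0)|=N(I)^3>0$ automatically. In the real case $N(\delta_0)=\pm N(I)^3$, and if $\O$ has no unit of norm $-1$ while $N(\delta_0)<0$, there is no admissible generator of $I^3$ for this particular representative $I$. This is precisely why the equivalence is taken up to cube factors in all of $(\O\otimes\Q)^\times$ rather than merely unit cubes: replacing $I$ by $\kappa I$ and $\delta_0$ by $\kappa^3\delta_0$ for some $\kappa$ of negative norm (for instance $\kappa=\sqrt{\Disc(\O)}$) reverses the sign of the norm and produces a valid triple in the same fiber of the forgetful map. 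I would therefore first observe that this fiber is always nonempty, after which the admissible set is genuinely a $U$-torsor and the count equals $|U/U^3|$.

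It then remains to compute $|U/U^3|$ order by order using the structure of $\O^\times$. If $\O$ is imaginary quadratic, every unit has norm $1$, so $U=\O^\times$ is cyclic of order $2$, $4$, or $6$ according as $\Disc(\O)<-4$, $=-4$, or $=-3$; since $|C_n/C_n^3|=\gcd(n,3)$, this gives $1$, $1$, and $3$ respectively, i.e. $1$ when $\Disc(\O)<-3$ and $3$ when $\Disc(\O)=-3$. If $\O$ is real quadratic, then by Dirichlet's unit theorem $U\cong C_2\times\Z$, regardless of whether the fundamental unit has norm $+1$ or $-1$; cubing acts as the identity on the $C_2$ factor and as multiplication by $3$ on $\Z$, so $U/U^3\cong\Z/3\Z$ has order $3$, and since $\Disc(\O)>0>-3$ this is again the value $3$. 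Collecting the cases yields $1$ when $\Disc(\O)<-3$ and $3$ otherwise. The main obstacle is the real-quadratic sign/existence issue flagged above; once that is dispatched, what remains is the elementary index computation $|U/U^3|$.
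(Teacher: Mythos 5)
Your overall route is the same as the paper's: realize the admissible $\delta$ (for a suitable representative ideal) as a torsor under the norm-one units $U=U^+(\O)$, conclude that the count is the index $|U/U^{\times 3}|$, and compute that index case by case. Your case analysis agrees with the paper's, and your explicit treatment of nonemptiness in the real case is a genuine improvement: the paper sidesteps it by fixing ``an invertible ideal $I$ that arises in some valid triple,'' even though the existence of such a representative in each class is exactly what surjectivity in the exact sequence (\ref{hr}) requires.

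However, there is a genuine gap at the identification step. You argue that if $\delta$ and $\kappa^3\delta$ both yield valid triples with the same $I$, then preserving $\delta\O=I^3$ and the norm normalization forces ``$N(\kappa)=1$, i.e.\ $\kappa\in U$.'' This is a non sequitur: those two conditions only force $\kappa^3\in\O^\times$ and $N(\kappa)=1$, and a norm-one element of $(\O\otimes\Q)^\times$ whose cube is a unit of $\O$ need not lie in $\O$ when $\O$ is non-maximal. Concretely, take $\O=\Z[\sqrt{5}]$ and $\kappa=(3+\sqrt{5})/2$: then $N(\kappa)=1$ and $\kappa^3=9+4\sqrt{5}\in\O^\times$, yet $\kappa\notin\O$. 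Since $9+4\sqrt{5}$ generates the norm-one units of $\Z[\sqrt5]$ modulo $\{\pm1\}$, identifying $\delta$'s by all such cube factors would collapse the count from $3$ to $1$; so the equivalence you actually justified is strictly coarser than the one your final count uses, and it gives the wrong answer. The repair is to invoke the equivalence of triples from Theorem~\ref{bcfideal} correctly: the \emph{same} $\kappa$ must satisfy $\kappa I=I$ as well as $\delta'=\kappa^3\delta$, and since $I$ is invertible, $\kappa I=I$ forces $\kappa\O=\O$, i.e.\ $\kappa\in\O^\times$. Only then does $N(\kappa)=1$ put $\kappa\in U$, making the identification subgroup exactly $U^{\times3}$ and the count $|U/U^{\times3}|$, after which your computation of that index is correct.
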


\begin{proof}
  Fix an invertible ideal $I$ of $\O$ that arises in some valid triple. 
The number of elements~$\delta$ 
  having norm equal to $N(I)^3$ and yielding distinct elements of $H(\cO)$ is then $|U^+(\O)/U^+(\O)^{\times3}|$, where
  $U^+(\O)$ denotes the group of units of $\O$ having norm~1. In fact, we have an exact sequence
\begin{equation}\label{hr}
1 \to \frac{U^{+}(\cO)}{U^{+}(\cO)^{\times 3}} \to H(\cO) \to \Cl_3(\cO) \to 1.
\end{equation}
We see that for all orders $\O$ in imaginary quadratic fields other
than the maximal order $\bZ[\sqrt{-3}]$, the unit group has
cardinality 2 or 4, and therefore $|U^+(\O)/U^+(\O)^{\times3}|=1$.
For real quadratic orders $\O$, the unit group has rank one and 
torsion equal to $\{\pm 1\}$, and so $|U^+(\O)/U^+(\O)^{\times3}|=3$. 
Finally, for  $\O=\bZ[\sqrt{-3}]$, we have $|U^+(\O)/U^+(\O)^{\times3}|=3$ as well.
\end{proof}

Equation (\ref{hr}) thus make precise the relationship between
$H(\cO)$ and $\Cl_3(\cO)$. With regard to the sizes of these groups,
we obtain:

\begin{corollary}\label{hr2}
We have $|H(\cO)|= |\Cl_3(\cO)|$ when $\cO$ has discriminant $\Disc(\cO) < -3$, and $|H(\cO)|= 3\cdot|\Cl_3(\cO)|$ otherwise.
\end{corollary}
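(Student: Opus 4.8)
The plan is to read the corollary straight off the short exact sequence~(\ref{hr}) by a cardinality count. All three groups in~(\ref{hr}) are finite: $\Cl_3(\cO)$ because the class group of a quadratic order is finite, and the kernel $U^+(\cO)/U^+(\cO)^{\times3}$ because it is a quotient of a finitely generated abelian group by its cubes; hence $H(\cO)$ is finite as well. Exactness then gives $|H(\cO)| = |U^+(\cO)/U^+(\cO)^{\times3}|\cdot|\Cl_3(\cO)|$, so the entire statement reduces to recording the order of the kernel in each case.

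That order has already been computed in the proof of Lemma~\ref{deltalemma}, and I would simply invoke it. For an order $\cO$ in an imaginary quadratic field with $\Disc(\cO)<-3$, every unit has norm $1$, so $U^+(\cO)$ is the full unit group, of order $2$ or $4$; in either case the cubing map is a bijection (it is multiplication by $3$ on $\Z/2\Z$ or $\Z/4\Z$), whence $|U^+(\cO)/U^+(\cO)^{\times3}|=1$ and $|H(\cO)|=|\Cl_3(\cO)|$. In the complementary cases the quotient has order $3$: for any real quadratic order $U^+(\cO)$ has rank one with torsion $\{\pm1\}$, so $U^+(\cO)/U^+(\cO)^{\times3}\cong\Z/3\Z$; and for the one remaining imaginary order, of discriminant $-3$, the norm-one units are the six roots of unity and again the quotient has order $3$. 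Substituting these two values into the displayed cardinality identity yields exactly the two cases of the corollary.

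Because~(\ref{hr}) is already established in the proof of Lemma~\ref{deltalemma}, I regard the corollary itself as a formal consequence, with no genuinely new obstacle. If one wanted to justify the argument from scratch, the substantive point is the exactness of~(\ref{hr}), and in particular the surjectivity of the forgetful map $H(\cO)\to\Cl_3(\cO)$: given a class $[I]$ of order $3$ one has $I^3=(\delta)$, and to assemble a valid triple in the sense of Theorem~\ref{bcfideal} one must normalize so that $N(I)^3=N(\delta)$. Since $I^3=(\delta)$ forces $N(\delta)=\pm N(I)^3$, this is precisely the question of correcting the sign by a unit, which is controlled by $U^+(\cO)$ and dovetails with the unit computation above; identifying the fibres of the forgetful map with cosets of $U^+(\cO)/U^+(\cO)^{\times3}$ is the other half. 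Granting that, the count is immediate.
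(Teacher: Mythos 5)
Your proposal is correct and follows essentially the same route as the paper: the paper states Corollary~\ref{hr2} as an immediate consequence of the exact sequence~(\ref{hr}) together with the computation of $|U^+(\cO)/U^+(\cO)^{\times 3}|$ carried out in the proof of Lemma~\ref{deltalemma}, which is precisely your cardinality count. (One minor caveat in your supplementary remark: when a real quadratic order has no unit of norm $-1$, the sign in $N(\delta)=\pm N(I)^3$ is corrected not by a unit but by rescaling $I$ to $\kappa I$ and $\delta$ to $\kappa^3\delta$ for some $\kappa\in(\cO\otimes\Q)^\times$ of negative norm; this does not affect your main argument, which invokes~(\ref{hr}) as already established.)
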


\subsection{Projective binary cubic forms and invertibility}\label{projsection}

We now wish to explicitly describe the projective binary cubic
forms. Recall that the \emph{quadratic Hessian covariant} of $f(x,y) = ax^3 + 3bx^2y + 3cxy^2 + dy^3$ is given by
$Q(x,y)=Ax^2+Bxy+Cy^2$, where $A$, $B$, $C$ are defined by
(\ref{defABC}); then $Q$ also describes the norm form on $I$ mapping into~$\bZ$.  It is well-known, going back to the work of
Gauss, that $I$ is invertible if and only if $Q(x,y)$ is {\it primitive},
i.e., $(A,B,C)=(b^2-ac,ad-bc,c^2-bd)=1$ (see,
e.g., \cite[Prop.~7.4 \& Thm.~7.7(i)--(ii)]{Cox}). Thus, 
	\begin{equation}\label{projbcf}
	f(x,y)=ax^3+3bx^2y+
3cxy^2+dy^3 \mbox{ is {projective} } \Leftrightarrow
~(b^2-ac,ad-bc,c^2-bd)=1.
	\end{equation}

Let $\mathcal{S}$ denote the set of all projective forms 
$f(x,y)=ax^3+3bx^2y+3cxy^2+dy^3$ in $V_\Z^\ast$.
Let~$V^\ast_{\bZ_p}$ denote the set of all forms
$f(x,y)=ax^3+3bx^2y+3cxy^2+dy^3$ such that $a,b,c,d \in \bZ_p$, and let $\mu_p^\ast(\mathcal{S})$ denote the $p$-adic density of the
$p$-adic closure of $\mathcal{S}$ in $V_{\Z_p}^\ast$, where we normalize the
additive measure $\mu_p^\ast$ on $V_{\Z_p}^\ast = \bZ_p^4$ so that
$\mu_p^\ast(V_{\Z_p}^\ast)=1$.  The following lemma gives the value of~$\mu_p^\ast(\mathcal{S})$:

\begin{lemma}\label{primdensity}
We have $\mu_p^\ast(\mathcal{S})=1-\displaystyle{\frac{1}{p^2}}.$
\end{lemma}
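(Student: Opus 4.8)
The plan is to reduce the computation of the $p$-adic density to a finite count modulo $p$. By~\eqref{projbcf}, projectivity is the condition $(A,B,C)=1$, and over $\bZ_p$ this is simply the requirement that $A,B,C$ are not all divisible by $p$, i.e.\ that the reductions $\bar A,\bar B,\bar C\in\F_p$ do not all vanish. First I would identify the $p$-adic closure of $\mathcal{S}$ inside $V_{\bZ_p}^\ast$ with the clopen set $\mathcal{S}_p:=\{(a,b,c,d)\in\bZ_p^4:(\bar A,\bar B,\bar C)\neq(0,0,0)\}$. One inclusion is immediate, since every globally projective form is projective at $p$; for the reverse, $\mathcal{S}_p$ is open (a preimage under reduction mod $p$), so it suffices to note that any prescribed residue in $\mathcal{S}_p$ modulo a power of $p$ is realized by a form that is projective at all primes, which follows from a routine Chinese Remainder Theorem argument. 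Since $\mathcal{S}_p$ is a union of residue discs modulo $p$, its measure depends only on the count of solutions modulo $p$, and $\mu_p^\ast(\mathcal{S})=\mu_p^\ast(\mathcal{S}_p)$.

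It then suffices to show that the complementary condition $\bar A=\bar B=\bar C=0$ cuts out exactly $p^2$ points of $\F_p^4$, each contributing measure $p^{-4}$, so that $\mu_p^\ast(V_{\bZ_p}^\ast\setminus\mathcal{S}_p)=p^2\cdot p^{-4}=1/p^2$. The key observation is that $A=b^2-ac$, $B=ad-bc$, $C=c^2-bd$ are, up to sign, precisely the three $2\times 2$ minors of the Hankel matrix $\left(\begin{smallmatrix} a & b & c \\ b & c & d \end{smallmatrix}\right)$. Hence $\bar A=\bar B=\bar C=0$ is equivalent to this matrix having rank at most $1$ over $\F_p$, i.e.\ to its two rows $(a,b,c)$ and $(b,c,d)$ being proportional.

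The count is now elementary. If $a\neq 0$, the relations force $c=b^2/a$ and $d=b^3/a^2$ (and one checks $c^2=bd$ holds automatically), so each of the $p-1$ choices of $a$ and $p$ choices of $b$ determines $(c,d)$ uniquely, yielding $(p-1)p$ points; if $a=0$, the equations collapse to $b=c=0$ with $d$ arbitrary, yielding $p$ further points. The total is $(p-1)p+p=p^2$, whence $\mu_p^\ast(\mathcal{S})=1-1/p^2$.

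I expect the only genuine subtlety to be the closure step, namely verifying that the $p$-adic closure of the globally defined set $\mathcal{S}$ fills out the entire clopen set $\mathcal{S}_p$ rather than a proper subset; this is where strong approximation is used, and it is routine. The conceptual heart of the lemma is the reinterpretation of the three Hessian coefficients $A,B,C$ as minors of a Hankel matrix, after which the rank-at-most-$1$ count is immediate.
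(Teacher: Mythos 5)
Your proof is correct and is essentially the paper's: the paper likewise reduces to counting solutions of $b^2-ac\equiv ad-bc\equiv c^2-bd\equiv 0\pmod{p}$, solving for $(c,d)$ in terms of $(a,b)$ (your Hankel rank-at-most-$1$ computation in different language, with the same case split $a\neq 0$ versus $a=0$) to get $p^2$ points and hence density $1-1/p^2$. One caveat: the closure step you flag is also glossed over by the paper, but it is not quite a routine finite CRT argument, since a naive lift has bad primes that move when you adjust it and infinitely many primes must ultimately be controlled; it does work if you first fix representatives $a,b,c$ of their residue classes so that $A=b^2-ac\neq 0$ and $\gcd(a,b,c)$ is a power of $p$, after which only the finitely many primes dividing $A$ need to be handled by a congruence choice of $d$ (alternatively, one can invoke the sieve machinery of Section 3 together with the observation that non-projectivity at $q$ forces non-maximality at $q$).
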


\begin{proof}
  Suppose \begin{equation}\label{primeq}
b^2-ac\,\equiv\, bc-ad\,\equiv\,
  c^2-bd\,\equiv\, 0 \pmod{p}.
\end{equation}
Then the pair $(a,b)$ can take any value except
$(0,r)$, where $r\not\equiv 0$ (mod $p$).  Given any such nonzero
pair $(a,b)$, the variables $c$ and $d$ are then clearly
determined modulo $p$ from $(a,b)$.  If
$(a,b)\equiv(0,0)$~(mod~$p$), then $c$ must also vanish
(mod~$p$), while $d$ can be arbitrary (mod~$p$).  We conclude that
the total number of solutions (mod~$p$) to (\ref{primeq}) for
the quadruple 
$(a,b,c,d)$ is $(p^2-(p-1))+(p-1)=p^2$.  Thus $\mu_p^\ast(\mathcal{S})=
(p^4-p^2)/p^4$, as claimed.
  \end{proof}

\subsection{Reducible forms}

As summarized in the introduction, the correspondence of
Delone-Faddeev in~\cite{DF} between irreducible binary cubic forms and
orders in cubic fields was used by Davenport--Heilbronn~\cite{DH} to
determine the density of discriminants of cubic fields.
Theorem~\ref{bcfideal}, however, gives a different correspondence than
the one due to Delone-Faddeev~\cite{DF}; in particular, it does
\emph{not} restrict to irreducible forms. The question then arises: which
elements of $H(\cO)$ correspond to the integer-matrix binary cubic
forms that are reducible, i.e., that factor over $\Q$ (equivalently,
$\Z$)?  We answer this question here, first by establishing which
triples $(\cO,I,\delta)$ correspond to reducible binary cubic
forms.

\begin{lemma}\label{lemma2}
  Let $f$ be an element of $V^\ast_\Z$, and let
  $(\O,I,\delta)$ be a representative for the corresponding equivalence class of triples as given by
  Theorem~$\ref{bcfideal}$.  Then $f$ has a rational zero as a binary
  cubic form if and only if 
  $\delta$ is a cube in $(\O\otimes\Q)^\times$.
\end{lemma}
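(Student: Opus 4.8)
The plan is to work directly with the explicit trilinear description of $f$ recorded in~(\ref{cf}). Writing $I = \Z\alpha + \Z\beta$ and extending $\pi$ to the $\Q$-linear surjection $\pi : \O \otimes \Q \to \Q$ whose kernel is exactly $\Q \cdot 1$, the form becomes $f(x,y) = \pi(\delta^{-1}(\alpha x + \beta y)^3)$. Thus a pair $(x_0, y_0) \in \Q^2$ is a zero of $f$ precisely when $\gamma := \alpha x_0 + \beta y_0 \in \O \otimes \Q$ satisfies $\delta^{-1}\gamma^3 \in \ker \pi = \Q$. Since $\{\alpha,\beta\}$ is a $\Q$-basis of $I \otimes \Q = \O \otimes \Q$, the map $(x,y) \mapsto \alpha x + \beta y$ is a $\Q$-linear isomorphism $\Q^2 \to \O \otimes \Q$, so rational zeros of $f$ correspond bijectively to nonzero $\gamma \in \O \otimes \Q$ with $\gamma^3 \in \Q \cdot \delta$.

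The converse implication is then immediate: if $\delta = \mu^3$ for some $\mu \in (\O \otimes \Q)^\times$, I would take $(x_0, y_0)$ to be the unique rational pair with $\alpha x_0 + \beta y_0 = \mu$; this pair is nonzero, and $f(x_0, y_0) = \pi(\mu^{-3}\mu^3) = \pi(1) = 0$, so $f$ has a rational zero.

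For the forward implication, suppose $(x_0, y_0) \neq (0,0)$ is a rational zero and set $\gamma = \alpha x_0 + \beta y_0 \neq 0$. By the reduction above, $\gamma^3 = r\delta$ for some $r \in \Q$, and $r \neq 0$ since $\gamma, \delta \neq 0$. Because $\delta$ is invertible, $\gamma^3 = r\delta$ is invertible, hence so is $\gamma$ and $N(\gamma) \neq 0$ (this is what rules out the zero-divisor issue in the split case $\O \otimes \Q \cong \Q \times \Q$). The crux is to show $r$ is a cube in $\Q^\times$; for this I would take algebra norms of $\gamma^3 = r\delta$ and invoke the normalization $N(\delta) = N(I)^3$ built into the triple, obtaining $N(\gamma)^3 = r^2 N(I)^3$, that is $(N(\gamma)/N(I))^3 = r^2$. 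An elementary $p$-adic valuation argument on this equality in $\Q^\times$, together with the observation that $-1 = (-1)^3$, then forces $r = t^3$ for some $t \in \Q^\times$, whence $\delta = (t^{-1}\gamma)^3$ is a cube in $(\O \otimes \Q)^\times$.

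The main obstacle is exactly this last step: a priori a rational zero only yields $\delta = r^{-1}\gamma^3$ up to the rational scalar $r$, and a rational scalar need not be a cube inside a quadratic field. Everything hinges on the norm condition $N(\delta) = N(I)^3$ from Theorem~\ref{bcfideal}, which pins down $r^2$ as a cube and thereby forces $r$ itself to be a cube in $\Q^\times$. I expect the remaining bookkeeping to be routine once this observation is in place.
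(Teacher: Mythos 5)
Your proof is correct and takes essentially the same route as the paper's: in the forward direction you take norms of $\gamma^3 = r\delta$ and use the normalization $N(\delta)=N(I)^3$ to force the rational scalar $r$ to be a cube (the paper does exactly this after clearing denominators so that the scalar is an integer $n$), and in the converse direction you produce the zero from the coordinates of a cube root of $\delta$ (the paper instead normalizes $\delta=1$ and picks a basis of $I$ with $\alpha\in\Z$, which is the same idea). The only cosmetic differences are that you keep rational scalars throughout and spell out the invertibility of $\gamma$, which handles the split case $\O\otimes\Q\cong\Q\times\Q$ explicitly.
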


\begin{proof}
  Suppose $\delta=\xi^3$ for some invertible $\xi\in \O\otimes\Q$.
  Then by replacing $I$ by $\xi^{-1}I$ and $\delta$ by~$\xi^{-3}\delta$ if necessary, we may assume that $\delta=1$.  Let
  $\alpha$ be the smallest positive element in $I\cap \Z$, and extend
  to a basis $\langle \alpha,\beta\rangle$ of $I$.  Then the binary
  cubic form $f$ corresponding to the basis $\langle
  \alpha,\beta\rangle$ of $I$ via
  Theorem~\ref{bcfideal} 
evidently has a zero, since $\alpha\in\Z$, $\delta=1$, and so 
  $a=0$ in (\ref{bcfdef}).

  Conversely, suppose $(x_0,y_0)\in\Q^2$ with $f(x_0,y_0) = 0$. Without loss of generality, we may assume that $(x_0,y_0) \in \bZ^2$.  If $(\O,I,\delta)$ is the corresponding triple and $I$ has positively oriented basis $\langle \alpha, \beta\rangle$, then by~(\ref{bcfdef}) or (\ref{cf}) we obtain
    $$(x_0 \alpha + y_0 \beta)^3 =  n\delta \quad \mbox{for some  } n \in \bZ.$$
If $\xi = x_0 \alpha + y_0\beta$, then we have $\xi^3 =n\delta$, and taking norms to $\Z$ on both sides reveals that $N(\xi)^3=n^2N(\delta)=n^2N(I)^3$.  Thus $n=m^3$ is a cube.
This then implies  that $\delta$ must be a cube in $(\O\otimes\Q)^\times$ as well,
namely, $\delta=(\xi/m)^3$, as desired.
\end{proof}

The reducible forms thus form a subgroup of $H(\O)$, which we denote
by $H_{\red}(\cO)$; by the previous lemma, it is the subgroup
consisting of those triples $(\O,I,\delta)$, up to equivalence, for
which~$\delta$ is a cube. As in the introduction,
let~$\cI_3(\cO)$ denote the 3-torsion subgroup of the ideal group of
$\cO$, i.e.~the set of invertible ideals $I$ of $\cO$ such that $I^3 =
\cO$. We may then define a map
\begin{equation}
\varphi: \cI_3(\cO) \longrightarrow H(\cO) \qquad \qquad \varphi: I \mapsto (\cO,I,1).
\end{equation}
It is evident that $\im(\cI_3(\cO)) \subseteq H_{\red}(\cO)$. In fact, we show that $\varphi$ defines an isomorphism between $\cI_3(\cO)$ and $H_{\red}(\cO)$: 

\begin{theorem}\label{reducible}
The image of $\cI_3(\cO)$ under $\varphi$ is isomorphic to $H_{\red}(\cO)$.
\end{theorem}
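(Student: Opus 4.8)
The plan is to show that $\varphi$ is a group homomorphism whose image is exactly $H_{\red}(\cO)$; since $\im(\varphi)$ is then a subgroup of $H_{\red}(\cO)$ that exhausts it, the two groups are in particular isomorphic, which is what the theorem asserts. First I would record that $\varphi$ is a homomorphism: for $I,J\in\cI_3(\cO)$ the composition law on $H(\cO)$ gives
\[
\varphi(I)\circ\varphi(J) = (\cO,I,1)\circ(\cO,J,1) = (\cO,IJ,1) = \varphi(IJ).
\]
One must also check that each $(\cO,I,1)$ is a genuine projective triple in the sense of Theorem~\ref{bcfideal}: since $I$ is invertible with $I^3=\cO$, and $N(I)^3=N(\cO)=1=N(1)$ forces $N(I)=1$, all the defining conditions hold. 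Thus $\im(\varphi)$ is a subgroup of $H(\cO)$, and, as already noted, it lands inside $H_{\red}(\cO)$ by Lemma~\ref{lemma2}, because $\delta=1$ is a cube.

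The heart of the argument is surjectivity onto $H_{\red}(\cO)$. I would take an arbitrary class in $H_{\red}(\cO)$, represented by a projective triple $(\cO,I,\delta)$. By Lemma~\ref{lemma2}, reducibility is equivalent to $\delta$ being a cube, say $\delta=\xi^3$ with $\xi\in(\cO\otimes\bQ)^\times$. Applying the equivalence of triples with scaling factor $\kappa=\xi^{-1}$ yields
\[
(\cO,I,\delta)\ \sim\ (\cO,\xi^{-1}I,\xi^{-3}\delta) = (\cO,\xi^{-1}I,1).
\]
Setting $J:=\xi^{-1}I$, projectivity of the original triple guarantees that $J$ is again an invertible fractional $\cO$-ideal, and the relation $J^3=\xi^{-3}I^3=\xi^{-3}(\delta)=\cO$ together with $N(J)^3=N(1)=1$ shows $J\in\cI_3(\cO)$. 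Hence $\varphi(J)=(\cO,J,1)$ represents the chosen class, so $\im(\varphi)=H_{\red}(\cO)$.

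I would expect the only genuine subtlety to be confirming that the normalization $\delta\mapsto 1$ really produces an element of the \emph{ideal} group $\cI_3(\cO)$ and not merely a lattice: one needs that $J=\xi^{-1}I$ remains an invertible $\cO$-ideal and that $J^3=\cO$ holds exactly (not just $J^3\subseteq\cO$), both of which follow from the projectivity hypothesis $I^3=(\delta)$ noted for projective triples at the start of this section. I would also remark that $\varphi$ need not be injective—when $\cO\otimes\bQ=\bQ(\sqrt{-3})$ and $\cO$ does not itself contain a primitive cube root of unity $\zeta$, the ideals $I$ and $\zeta I$ yield equivalent triples $(\cO,I,1)$—so the statement is correctly phrased in terms of the image. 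The asserted isomorphism $\im(\varphi)\cong H_{\red}(\cO)$ is then immediate from the equality of the two groups established above.
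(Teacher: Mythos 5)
Your proposal is correct, and its essential content---surjectivity of $\varphi$ onto $H_{\red}(\cO)$, obtained by writing $\delta=\xi^3$ via Lemma~\ref{lemma2} and rescaling by $\kappa=\xi^{-1}$---is precisely the paper's argument; the verifications that $\varphi$ is a well-defined homomorphism landing in $H_{\red}(\cO)$ are routine and consistent with the paper's setup. Where you genuinely depart from the paper is in your closing remark that $\varphi$ need not be injective, and there you are right while the paper is not. The paper's proof goes beyond the stated theorem and claims injectivity, via the chain ``$(\kappa\cO)^3=\cO$ implies $\kappa^3\in\cO^\times$ and so $\kappa\in\cO^\times$''; the last implication is false in general, since $\kappa^3\in\cO^\times$ only forces $\kappa$ to be a unit of the \emph{maximal} order of $\cO\otimes\bQ$. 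Your counterexample is exactly the one that breaks it: for a non-maximal order $\cO$ in $\bQ(\sqrt{-3})$ and $\zeta$ a primitive cube root of unity, $\zeta\cO$ is an invertible fractional ideal with $(\zeta\cO)^3=\cO$ and $\zeta\cO\neq\cO$ (an invertible ideal has multiplier ring exactly $\cO$, and $\zeta\notin\cO$), while $(\cO,\zeta\cO,1)\sim(\cO,\cO,1)$ via $\kappa=\zeta^2$. Hence $\ker\varphi=\{\cO,\zeta\cO,\zeta^2\cO\}$ has order $3$ for exactly these orders, and the theorem is then best proved the way you prove it: $\im(\varphi)=H_{\red}(\cO)$, with no claim of injectivity.

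This matters beyond aesthetics, because the paper later invokes Theorem~\ref{reducible} in the form $|H_{\red}(\cO)|=|\cI_3(\cO)|$, in (\ref{irredcountorders}) and (\ref{irredcountgen}); for non-maximal orders in $\bQ(\sqrt{-3})$ the correct relation is $|H_{\red}(\cO)|=\frac13|\cI_3(\cO)|$. Fortunately the discrepancy is asymptotically negligible: all exceptional orders lie in the single field $\bQ(\sqrt{-3})$, so there are only $O(\sqrt{X})$ of them with $|\Disc(\cO)|<X$, and a local computation bounds $|\cI_3(\cO)|$ by $\,\ll_\epsilon |\Disc(\cO)|^{\epsilon}$, so their total contribution to those sums is $o(X)$ and all of the paper's asymptotic theorems survive unchanged. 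Still, your formulation---working with the image and explicitly flagging the failure of injectivity---is the correct reading of the statement, and the paper's injectivity claim should be qualified to exclude non-maximal orders in $\bQ(\sqrt{-3})$ (equivalently, handled by noting the kernel has order $3$ there).
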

\begin{proof}
  The preimage of the identity $(\cO,\cO,1) \in H(\cO)$ can only
  contain 3-torsion ideals of the form $\kappa\cdot\O$ for $\kappa\in
(\O\otimes \bQ)^\times$. 
To be a 3-torsion ideal, we must have $(\kappa \cO)^3 = \cO$ which implies that $\kappa^3 \in \cO^\times$ and so $\kappa \in \cO^\times$. 
Therefore, the preimage of the identity is simply the ideal $\cO$, and the map is injective. 
It remains to show surjectivity onto $H_{\red}(\cO)$. Assume $(\cO,I,\delta) \in H_{\red}(\cO)$. Since~$\delta$ is a cube by definition, let $\delta = \xi^3$ and recall that $(\cO,I,\delta) \sim (\cO,\xi^{-1}I,1)$. 
Thus $\xi^{-1}I \in \cI_3(\cO)$.
\end{proof} 

\begin{corollary}\label{identity} Assume that $\cO$ is maximal. Then $H_{\red}(\cO)$ contains only the identity element of $H(\cO)$, which can be represented by $(\cO,\cO,1)$. \end{corollary}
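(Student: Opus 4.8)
The plan is to reduce everything to Theorem~\ref{reducible}, which provides an isomorphism $H_{\red}(\cO)\cong\cI_3(\cO)$, where $\cI_3(\cO)$ is the 3-torsion subgroup of the ideal group of $\cO$. It therefore suffices to show that, when $\cO$ is maximal, the only invertible ideal $I$ with $I^3=\cO$ is the trivial ideal $I=\cO$; equivalently, that $\cI_3(\cO)$ is trivial. Since the identity of $H(\cO)$ for the composition law $(\O,I,\delta)\circ(\O,I',\delta')=(\O,II',\delta\delta')$ is exactly the class of $(\cO,\cO,1)$, establishing this triviality immediately yields the corollary.

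To prove that $\cI_3(\cO)$ is trivial, I would invoke the fact that a maximal order in a quadratic field is the full ring of integers, hence a Dedekind domain, in which every nonzero fractional ideal factors uniquely as a product of prime-ideal powers. Consequently the group of invertible fractional ideals is free abelian on the set of nonzero primes, and in particular is torsion-free. First I would write $I=\prod_{\mathfrak{p}}\mathfrak{p}^{e_{\mathfrak{p}}}$; the relation $I^3=\cO$ then reads $\prod_{\mathfrak{p}}\mathfrak{p}^{3e_{\mathfrak{p}}}=\cO$, and unique factorization forces every $e_{\mathfrak{p}}=0$, whence $I=\cO$. Thus $\cI_3(\cO)=\{\cO\}$, and under the isomorphism of Theorem~\ref{reducible} the subgroup $H_{\red}(\cO)$ collapses to the single class $(\cO,\cO,1)$.

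I do not anticipate any genuine obstacle here: the entire mathematical content is the torsion-freeness of the ideal group of a Dedekind domain, and Theorem~\ref{reducible} has already done the work of translating reducible forms into that language. The only point I would take care to emphasize is that \emph{maximality} is precisely what guarantees the invertibility of every nonzero ideal, and hence the Dedekind property on which the argument rests; for non-maximal orders this torsion-freeness fails (as Example~\ref{ex1} shows), which is exactly why the corollary is restricted to the maximal case.
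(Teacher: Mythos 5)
Your proposal is correct and follows essentially the same route as the paper: the paper's own proof also rests on Theorem~\ref{reducible} together with the observation that a maximal order is a Dedekind domain, whose ideal group is torsion-free, so the only ideal with $I^3=\cO$ is $\cO$ itself. You have merely spelled out the torsion-freeness via unique factorization into prime ideals, which the paper leaves implicit.
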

\begin{proof} Since maximal orders are Dedekind domains, the only ideal that is 3-torsion in the ideal group is~$\cO$. 
\end{proof}

\section{A proof of Davenport and Heilbronn's theorem on class numbers without class field theory}

Using the direct correspondence of Theorem~\ref{bcfideal}, we can now deduce Theorem~\ref{theoremdh} by counting the relevant binary cubic forms. To do so, we need the following result of Davenport describing the asymptotic behavior of the number of binary cubic forms of bounded reduced discriminant in subsets of $V_\bZ^\ast$ defined by finitely many congruence conditions: 

\begin{theorem}[{\bf \cite{Davenport1}, \cite{Davenport2}, \cite[\S5]{DH}, \cite[Thm.~26]{BST}}]\label{thmdensity}
  Let $S$ denote a set of integer-matrix binary cubic forms 
  in $V_{\bZ}^\ast$ defined by finitely many congruence conditions
  modulo prime powers. Let $V_\bZ^{\ast (0)}$ denote the set of
elements in~$V_{\bZ}^\ast$
  having positive reduced discriminant, and $V_\bZ^{\ast (1)}$ the
  set of elements in~$V_\bZ^\ast$ having reduced negative
  discriminant. For $i = 0$ or $1$, let $N^\ast(S \cap V_\bZ^{\ast
    (i)}, X)$ denote the number of {\em irreducible}
  $\SL_2(\bZ)$-orbits on $S \cap V_\bZ^{\ast (i)}$ having absolute
  reduced discriminant $|\disc|$ less than $X$.  Then
	\begin{equation}\label{ramanujan}
\lim_{X \rightarrow \infty} \frac{N^\ast(S \cap V_\bZ^{\ast (i)},X)}{X} = \frac{\pi^2}{4 \cdot n_i^\ast}\prod_p \mu_p^\ast(S),
	\end{equation}
where $\mu_p^\ast(S)$ denotes the $p$-adic density of $S$ in $V_{\bZ_p}^\ast$, and $n_i^\ast = 1$ or $3$ for $i = 0$ or $1$, respectively. 

\end{theorem}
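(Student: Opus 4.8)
The statement is precisely Davenport's geometry-of-numbers count of integer-matrix binary cubic forms of bounded reduced discriminant, refined by a congruence sieve, and the plan is to follow the classical argument of Davenport~\cite{Davenport1,Davenport2} as packaged in~\cite[\S5]{DH} and~\cite[Thm.~26]{BST}. First I would fix, for each sign $i\in\{0,1\}$, a fundamental domain $\FF_i$ for the action of $\SL_2(\bZ)$ on the set of real forms in $V_\bR$ whose reduced discriminant has the corresponding sign and absolute value in $(0,1]$. Since $\disc$ is homogeneous of degree $4$ in the four coordinates $(a,b,c,d)$, dilating $\FF_i$ by the factor $X^{1/4}$ in each coordinate produces a region $\RR_i(X)$ that is a fundamental domain for the forms with $0<|\disc|<X$. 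The quantity $N^\ast(S\cap V_\bZ^{\ast(i)},X)$ is then, up to the negligible contribution of the finitely many forms with extra real or rational symmetry, the number of \emph{irreducible} points of the shifted lattice cut out by the congruence conditions defining $S$ that lie in $\RR_i(X)$.

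The main term arises from a volume computation. For the body of $\FF_i$ away from its cusp, the number of points of $V_\bZ^\ast$ in the expanding region $\RR_i(X)$ satisfies the lattice-point estimate $\#(\Lambda\cap\RR)=\Vol(\RR)+o(\Vol(\RR))$, and because the dilation is by $X^{1/4}$ in each of the four coordinates, $\Vol(\RR_i(X))$ grows linearly in $X$. To make this rigorous despite the non-compactness of $\FF_i$, I would use Davenport's averaging trick: rather than counting in a single fundamental domain, one averages the count over a fixed compact set of $\SL_2(\bR)$-translates of $\FF_i$, so that the boundary of the counting region becomes regular enough for Davenport's lattice-point theorem to apply. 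Evaluating the resulting volume integral against the reduced-discriminant measure produces the constant $\pi^2/4$, while the dependence on $i$ through the factor $1/n_i^\ast$ reflects the different structure (number of $\SL_2(\bR)$-orbits and sizes of stabilizers) of real forms of positive versus negative reduced discriminant, giving $n_0^\ast=1$ and $n_1^\ast=3$.

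The crux of the argument, and where I expect the real work to lie, is controlling the cusp of $\FF_i$. The fundamental domain is not compact, and its cuspidal region contains a large number of points of $V_\bZ^\ast$; however these are overwhelmingly \emph{reducible} forms (forms possessing a rational linear factor), exactly as anticipated by Lemma~\ref{lemma2} and Theorem~\ref{reducible}, which identify reducible triples with cubes $\delta$ and hence with $\cI_3(\cO)$. Following Davenport, I would show that the number of \emph{irreducible} points of $V_\bZ^\ast$ in the cusp with $|\disc|<X$ is $o(X)$, so that the cusp contributes nothing to the main term for the irreducible count $N^\ast$. This is precisely the step that prevents the argument from reducing to a routine volume computation, and it is where the restriction to irreducible orbits is essential.

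Finally, to incorporate the defining congruence conditions of $S$, which involve only finitely many primes, I would run a finite inclusion-exclusion over residue classes modulo the relevant prime-power moduli. Each congruence condition is a union of translates of a finite-index sublattice of $V_\bZ^\ast$, so the volume and lattice-point estimates above apply verbatim to each residue class; summing over the classes comprising $S$ multiplies the main term by the local densities $\mu_p^\ast(S)$ at the finitely many primes where $\mu_p^\ast(S)\neq 1$, yielding exactly the factor $\prod_p\mu_p^\ast(S)$. With the uniform cusp estimate of the previous paragraph in hand, the remaining steps are the standard Davenport lattice-point machinery, and the asymptotic~(\ref{ramanujan}) follows.
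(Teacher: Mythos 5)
Your outline is mathematically sound, and the constants you quote ($n_0^\ast=1$, $n_1^\ast=3$, main term $\pi^2/4$) are the right ones; but you should know that the paper itself does \emph{not} re-run the geometry-of-numbers argument you describe. Its entire ``proof'' of Theorem~\ref{thmdensity} is a deduction from the already-established count of irreducible $\GL_2(\bZ)$-orbits on $V_\bZ$ with the ordinary discriminant (Davenport, \cite[\S5]{DH}, \cite[Thm.~26]{BST}), via three pieces of bookkeeping: (a) $V_\bZ^\ast$ has index $9$ in $V_\bZ$, so the local densities rescale; (b) $\Disc = -27\cdot\disc$, so bounding $|\disc|<X$ corresponds to bounding $|\Disc|<27X$ (and flips the sign convention, which is why $n_0^\ast$ pairs with the \emph{negative}-$\Disc$ constant $n_1=2$ and $n_1^\ast$ with $n_0=6$); and (c) every irreducible $\GL_2(\bZ)$-orbit splits into exactly two $\SL_2(\bZ)$-orbits, contributing a factor of $2$. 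Multiplying $\frac{\pi^2}{12 n_i}$ by $\frac{1}{9}\cdot 27\cdot 2$ gives $\frac{\pi^2}{4 n_i^\ast}$. What your route buys is self-containedness: you reconstruct the machinery (fundamental domain, $X^{1/4}$-homogeneous dilation, Davenport's lattice-point lemma with averaging, the cusp estimate showing irreducible points in the cusp are $o(X)$, and the finite inclusion-exclusion for the congruence conditions) directly in the $\SL_2$/integer-matrix/reduced-discriminant setting, so no translation of normalizations is ever needed. What it costs is that the two hardest ingredients---the evaluation of the volume constant $\pi^2/4$ and the uniform cusp estimate---are asserted rather than derived in your write-up, and you would also need the companion estimate that \emph{reducible} points in the body of the fundamental domain are negligible (Davenport's $O(X^{3/4+\epsilon})$ bound) before equating the volume count with the irreducible count there; the paper's citation-plus-conversion route makes exactly these points someone else's theorem and isolates the only genuinely new content, namely the conversion factors, of which the subtlest is (c): for an \emph{irreducible} form the $\GL_2(\bZ)$-stabilizer can contain no determinant $-1$ element (such an element would force, in the language of Corollary~\ref{gl2bijection}, $\bar I=\kappa I$ and $\bar\delta = \kappa^3\delta$, hence $[I]$ trivial and $\delta$ a cube, making the form reducible by Lemma~\ref{lemma2}), so the factor of $2$ is exact and not merely a density-one statement.
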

Note that in both \cite{BST} and \cite{DH}, this theorem is expressed
in terms of $\GL_2(\bZ)$-orbits of binary cubic forms in $V_\bZ$ with
discriminant $\Disc(\cdot)$ defined by $-27\cdot\disc(\cdot)$. Here,
we have stated the theorem for $\SL_2(\bZ)$-orbits of integer-matrix
binary cubic forms, and the $p$-adic measure is normalized so that
$\mu_p^\ast(V^\ast_{\bZ_p})$ = 1. This version is proved in exactly
the same way as the original theorem, but since: 
\begin{enumerate}
\item[(a)] $V_\bZ^\ast$ has index
9 in $V_\bZ$; 
\item[(b)] we use the reduced discriminant $\disc(\cdot)$ instead of $\Disc(\cdot)$; and
\item[(c)] there are two $\SL_2(\bZ)$-orbits in every irreducible
$\GL_2(\bZ)$-orbit,
\end{enumerate} the constant on the right hand side of
(\ref{ramanujan}) changes from $\frac{\pi^2}{12n_i}$ as in \cite{BST}
to $\frac{\pi^2}{4n_i^\ast}$, where $n_i = 6$ or $2$ for $i = 0$ or
$1$, respectively. 

Our goal then is to count the $\SL_2(\bZ)$-orbits of forms in $V_\bZ^{\ast
  (i)}$ that correspond, under the bijection described in
Theorem~\ref{bcfideal}, to equivalence classes of triples $(\cO, I,
\delta)$ where $\cO$ is a maximal quadratic ring and $I$ is
projective. However, if $\cO$ is a maximal 
quadratic ring, then 
all ideals of~$\cO$ are projective,
and so our only restriction on elements $f \in V_\bZ^{\ast (i)}$ then is that
$\disc(f)$ be the discriminant of a maximal quadratic ring.
It is well known that a quadratic
ring $\cO$ is maximal if and only if the odd part of the discriminant of $\cO$ is
squarefree, and $\disc(\cO) \equiv 1$, $5$, $8$, $9$, $12$, or $13
\pmod{16}$. We therefore define for every prime $p$:
	$$\cV_p := \begin{cases} \{ f \in V_{\bZ}^{\ast} : \disc(f) \equiv 1,5,8,9,12,13 \pmod{16}\} & \mbox{ if $p = 2$;} \\
						  \{ f \in V_{\bZ}^{\ast} : \disc_p(f) \mbox{ is squarefree}\} & \mbox{ if $p \neq 2$.} 
			  \end{cases}$$
Here, $\disc_p(f)$ is the $p$-part of $\disc(f)$. If we set $\cV := \cap_p \cV_p$, then $\cV$ is the set of forms in $V_{\bZ}^\ast$ for which the ring $\O$ in the associated triple $(\O,I,\delta)$ is a maximal quadratic ring. The following lemma describes the $p$-adic densities of $\cV$ (here, we are using the fact that the $p$-adic closure of $\cV$ is $\cV_p$):

\begin{lemma}[{\bf\cite[Lem. 4]{DH}}]\label{lemdensity} We have $\mu_p^\ast(\cV_p)= \displaystyle\frac{(p^2 - 1)^2}{p^4}$.
\end{lemma}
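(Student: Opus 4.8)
The plan is to compute each local density $\mu_p^\ast(\cV_p)$ directly from the definition, treating odd $p$ and $p=2$ separately. For odd $p$, the set $\cV_p$ is defined by $v_p(\disc(f))\le 1$, so it is equivalent to compute the complementary density $\rho_p:=\mu_p^\ast(\{f: p^2\mid \disc(f)\})$ and verify $1-\rho_p=(p^2-1)^2/p^4$. Since $\disc(f)\bmod p^2$ depends only on $f\bmod p^2$, the condition $p^2\mid\disc(f)$ is a union of cosets in $(\Z/p^2\Z)^4$, so $\rho_p=N/p^8$ with $N:=\#\{f\in(\Z/p^2\Z)^4:\disc(f)\equiv 0\ (\mathrm{mod}\ p^2)\}$. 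The entire odd-$p$ case thus reduces to establishing the count $N=2p^6-p^4$.

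First I would stratify the reduction-mod-$p$ hypersurface $S:=\{f_0\in\F_p^4:\disc(f_0)\equiv 0\}$ according to whether the gradient of $\disc$ vanishes at $f_0$. For any lift $f_0+pg$ of $f_0$, the first-order expansion
\[
\disc(f_0+pg)\equiv \disc(f_0)+p\,\nabla\disc(f_0)\cdot g \pmod{p^2}
\]
controls the fibre. On the smooth locus $S_{\mathrm{sm}}$, where $\nabla\disc(f_0)\not\equiv 0$, this is a nontrivial affine-linear condition on $g\in\F_p^4$, so exactly $p^3$ of the $p^4$ lifts satisfy $p^2\mid\disc$. On the singular locus $S_{\mathrm{sing}}$, where $\nabla\disc(f_0)\equiv 0$, the quantity $\disc$ is constant modulo $p^2$ along the fibre, so $f_0$ contributes either $0$ or all $p^4$ of its lifts, according to whether that constant value is $\equiv 0\pmod{p^2}$.

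The counting inputs are then $|S|$ and $|S_{\mathrm{sing}}|$. Classifying binary cubics over $\F_p$ by factorization type (three rational roots, a rational root plus a conjugate quadratic pair, an irreducible cubic) counts the separable forms as $p(p-1)^2(p+1)$, whence $|S|=p^4-p(p-1)^2(p+1)=p^3+p^2-p$. A direct computation of $\nabla\disc$ identifies its zero locus with the perfect cubes $c\,\ell^3$ (together with $f_0=0$), of which there are $p^2$, so $|S_{\mathrm{sing}}|=p^2$ and $|S_{\mathrm{sm}}|=p^3-p$. The key simplification is that each $f_0\in S_{\mathrm{sing}}$, being a perfect cube mod $p$, lifts to a perfect cube mod $p^2$, whose discriminant vanishes identically; hence every singular point contributes its full $p^4$ lifts. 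Assembling,
\[
N=p^3|S_{\mathrm{sm}}|+p^4|S_{\mathrm{sing}}|=p^3(p^3-p)+p^4\cdot p^2=2p^6-p^4,
\]
giving $\mu_p^\ast(\cV_p)=1-(2p^6-p^4)/p^8=(p^2-1)^2/p^4$ for odd $p$. At $p=2$, where $\cV_2$ is defined by $\disc(f)\in\{1,5,8,9,12,13\}\pmod{16}$ (the residues cutting out the $2$-maximal discriminants, as recalled above), I would instead verify the target value $9/16=(2^2-1)^2/2^4$ by a finite computation: $\disc(f)\bmod 16$ depends only on $f\bmod 16$, so one tabulates its distribution over $(\Z/16\Z)^4$ and checks that the six admissible residues occur with total density $9/16$.

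I expect the main obstacle to lie in the singular-locus analysis: showing that the zero locus of $\nabla\disc$ is \emph{exactly} the set of perfect cubes, with no extra components at the special prime $p=3$ (where $\partial_b\disc$ and $\partial_c\disc$ vanish identically modulo $3$, so the locus must be checked by hand to still have size $p^2$ and still consist of cubes), and confirming that each such point genuinely lifts rather than being obstructed mod $p^2$. This is precisely what pins down $|S_{\mathrm{sing}}|=p^2$ and forces $S_{\mathrm{sing}}$ to contribute in full, and it is the step where an inattentive count of lifts would produce the wrong constant.
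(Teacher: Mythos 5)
Your argument is correct, but it takes a genuinely different route from the paper: the paper gives no proof of this lemma at all, quoting it as Lemma 4 of Davenport--Heilbronn, with the understanding (cf.\ the remarks following Theorem~12) that the DH computations transfer to the integer-matrix space $V_\Z^\ast$ equipped with the reduced discriminant $\disc=-\Disc/27$. Your Hensel-style stratification --- splitting $S=\{f_0\in\F_p^4:\disc(f_0)\equiv 0 \bmod p\}$ into the smooth locus (each point contributing $p^3$ of its $p^4$ lifts modulo $p^2$) and the cuspidal locus of cubes $c\,\ell^3$ (each contributing all $p^4$ lifts, since such a point lifts to an actual cube, whose discriminant vanishes identically, and $\disc$ is constant mod $p^2$ on fibres over singular points) --- is sound, and the arithmetic $N=p^3(p^3-p)+p^4\cdot p^2=2p^6-p^4$, hence $\mu_p^\ast(\cV_p)=1-N/p^8=(p^2-1)^2/p^4$, is right. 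A concrete advantage of your self-contained computation is that it establishes the lemma in the normalization the paper actually uses: DH work with $V_\Z$ and $\Disc$, which differ from $V_\Z^\ast$ and $\disc$ precisely at the prime $3$, so the citation requires exactly the kind of transfer-of-argument your proof carries out explicitly.

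Two caveats, both of which resolve favorably. First, at $p=3$ it is not only the singular locus that needs a hand check, contrary to what your last paragraph suggests: the count $|S|=p^3+p^2-p$ rests on identifying points of $\F_p^4$ with binary cubic forms over $\F_p$ and invoking factorization types, and this identification collapses mod $3$ (the form $ax^3+3bx^2y+3cxy^2+dy^3$ reduces to $ax^3+dy^3$). The check does succeed: modulo $3$ one has $\disc\equiv a^2d^2+ac+bd$ (using $x^3=x$ on $\F_3$), whose zero locus has $9+6+6+12=33=p^3+p^2-p$ points, while the locus $\nabla\disc\equiv 0$ is $\{(a,a^2d,ad^2,d):a,d\in\F_3\}$, which is exactly the set of reductions of the cubes $(ax+dy)^3$ and has $9=p^2$ elements; so your formula, and hence the lemma, holds at $p=3$. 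Second, you leave $p=2$ as an unexecuted tabulation over $(\Z/16\Z)^4$; this is legitimate in principle, but it can be cut down substantially: since $\disc\equiv(ad+bc)^2\pmod 4$, the density of $\disc\equiv 1\pmod 4$ equals the probability that $ad+bc$ is odd, namely $3/8=6/16$, so only the residues $8,12\pmod{16}$ (which must contribute density $3/16$) require the finer mod-$16$ count, and that count can be restricted to the set where $ad+bc$ is even.
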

We define $N^\ast(\cV \cap V_\bZ^{\ast (i)}, X)$ analogously, as the
number of irreducible orbits in $\cV \cap V_\bZ^{\ast (i)}$ having
absolute reduced discriminant between $0$ and $X$ (for $i =
0,1$). Since we are restricting to irreducible orbits, 
$N^\ast(\cV \cap V_\bZ^{\ast (i)}, X)$ counts
those (equivalence classes of) triples $(\cO,I,\delta)$ where $\cO$ is
maximal with $|\Disc(\cO)| < X$, but by Corollary \ref{identity}, 
the identity of $H(\cO)$ 
is {\it not} included 
in this count.

We cannot immediately apply Theorem~\ref{thmdensity} to compute
$N^\ast(\cV \cap V_\bZ^{\ast (i)}, X)$, as the set $\cV$ is defined by
infinitely many congruence conditions. However, the following
uniformity estimate for the complement of $\cV_p$ for all $p$ will
allow us in \S3.1 
to strengthen (\ref{ramanujan}) to also hold when $S = \cV$:

\begin{proposition}[{\bf\cite[Prop.~1]{DH}}]\label{unifest} Define $\cW_p^\ast = V_{\bZ}^\ast \backslash \cV_p$ for all primes $p$. Then $N(\cW_p^\ast;X) = O(X/p^2)$ where the implied constant is independent of $p$. 
\end{proposition}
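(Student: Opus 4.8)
The statement is the classical Davenport–Heilbronn uniformity estimate, and my plan is to reprove it by a direct lattice-point count in the fundamental domain, organized around the reduction of $f$ modulo $p$. For $p=2$ the condition defining $\cW_2^\ast$ is a fixed finite set of residues modulo $16$, so $N(\cW_2^\ast;X)=O(X)=O(X/2^2)$ is immediate from the total count; likewise the finitely many small odd primes (in particular $p=3$) are absorbed into the implied constant. So fix an odd prime $p\geq 5$. Unwinding the definition, $\cW_p^\ast$ is the set of forms with $p^2\mid\disc(f)$. The first step is to record the identity $\disc(f)=B^2-4AC$, which one checks directly from (\ref{discdef}) and (\ref{defABC}); that is, the reduced discriminant of $f$ equals the discriminant of its Hessian covariant $Q(x,y)=Ax^2+Bxy+Cy^2$. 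Using this, I partition $\cW_p^\ast$ into $\cW_p^{\mathrm I}$, the forms with $p\mid(A,B,C)$ (equivalently, by (\ref{projbcf}), the forms that are non-projective modulo $p$, equivalently $\bar f:=f\bmod p$ is a scalar times the cube of a linear form), and $\cW_p^{\mathrm{II}}:=\cW_p^\ast\setminus\cW_p^{\mathrm I}$, where $\bar f$ has a double but not triple linear factor. Note $\cW_p^{\mathrm I}\subseteq\cW_p^\ast$ automatically, since $p\mid(A,B,C)$ forces $p^2\mid B^2-4AC=\disc(f)$; this is the density-$1/p^2$ set from Lemma~\ref{primdensity}.

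For $\cW_p^{\mathrm I}$: since $\SL_2(\bZ)\twoheadrightarrow\SL_2(\F_p)$ acts transitively on $\P^1(\F_p)$, I would apply a suitable $\SL_2(\bZ)$-transformation moving the triple root of $\bar f$ to $[1:0]$, so that a representative of each orbit satisfies $p\mid b,\ p\mid c,\ p\mid d$. Substituting $(b,c,d)=(pb',pc',pd')$ into (\ref{discdef}) yields $\disc(f)=p^2\bigl(a^2d'^2+p\,(\cdots)\bigr)$, an integral multiple of $p^2$, so the constraint $|\disc(f)|<X$ becomes a bound of size $X/p^2$ on an integral quartic expression in $(a,b',c',d')$. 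Counting $\SL_2(\bZ)$-orbits of such forms is thereby reduced, up to a bounded factor coming from the stabilizer of the normalized root, to counting forms of discriminant at most $X/p^2$, which is $O(X/p^2)$; this bound is uniform in $p$.

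For $\cW_p^{\mathrm{II}}$: here $\bar f$ has a simple double root, so $p\,\|\,\disc$ to leading order and the requirement $p^2\mid\disc$ cuts out exactly one further congruence modulo $p$. Thus $\cW_p^{\mathrm{II}}$ is a union of $O(p^2)$ residue classes modulo $p^2$, of $p$-adic density $O(1/p^2)$, consisting of forms whose leading behaviour places them in the main body of a Julia-reduced fundamental domain $\FF$. The count of orbits in $\cW_p^{\mathrm{II}}$ with $|\disc|<X$ then splits as a main-body contribution, estimated by $\Vol$ of the relevant region times the density $O(1/p^2)$ — giving $O(X/p^2)$ — together with a boundary (cusp) contribution.

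The main obstacle is the uniformity of this cusp estimate: one must show that the number of forms of $\cW_p^\ast$ lying in the cusps of $\FF$ is also $O(X/p^2)$ with constant independent of $p$, since naive volume bounds degenerate there. By Lemma~\ref{lemma2} and Theorem~\ref{reducible}, the cusp is populated by the reducible forms, which correspond to $3$-torsion ideals; concretely these are the forms with small leading coefficient $a$. I would bound their number by slicing according to $a$ and counting integer triples $(b,c,d)$ on the sublevel set $\{|\disc|<X\}$ subject to the defining congruences of $\cW_p^\ast$: the mod-$p$ congruence saves one factor of $p$ and the $p^2\mid\disc$ condition a second, after which Davenport's explicit description of the reduced domain furnishes the uniform $O(X/p^2)$ bound. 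Assembling the contributions of $\cW_p^{\mathrm I}$, the main body of $\cW_p^{\mathrm{II}}$, and the cusp gives $N(\cW_p^\ast;X)=O(X/p^2)$ with implied constant independent of $p$, as asserted.
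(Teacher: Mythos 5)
The first thing to say is that the paper does not prove this proposition at all: it is quoted verbatim from Davenport--Heilbronn (\cite[Prop.~1]{DH}), with the accompanying remark only explaining how the one class-field-theoretic ingredient in DH's proof (\cite[Lem.~7]{DH}) could be replaced by Theorems~\ref{bcfideal} and~\ref{thmdensity}. So your proposal is an attempt to reprove DH's estimate from scratch. Its skeleton --- stratify $\cW_p^\ast$ by the factorization type of $f \bmod p$, count lattice points in Davenport's fundamental domain, treat the cusp separately --- is indeed the shape of the classical argument, but each step where the actual difficulty lives is either wrong as stated or asserted rather than proved.

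Concretely: (1) For $\cW_p^{\mathrm I}$ the reduction fails. The subgroup of $\SL_2(\bZ)$ preserving the sublattice $\{p \mid b,c,d\}$ is the preimage of the stabilizer of $[1{:}0]$ in $\SL_2(\F_p)$, of index $p+1$ --- not a ``bounded factor'' --- so passing between $\SL_2(\bZ)$-orbit counts and counts in the sublattice introduces a $p$-dependent factor that must be tracked (heuristically the answer is $(p+1)\cdot X/p^3$, which is where $X/p^2$ comes from). Worse, after substituting $(b,c,d)=(pb',pc',pd')$ the quantity $a^2d'^2 + p(\cdots)$ is \emph{not} the discriminant of any integral binary cubic form, so there is no ``count of forms of discriminant at most $X/p^2$'' to invoke; what remains is counting points of an index-$p^3$ sublattice in the region $\{|\disc|<X\}$, and showing the error terms are uniform in $p$ --- which is precisely the content of the proposition. (2) The same issue sinks the ``main body'' estimate for $\cW_p^{\mathrm{II}}$: volume-times-density is valid only up to lattice-point error terms, and for congruence conditions modulo $p^2$ these are not uniformly negligible; once $p$ exceeds a small power of $X$, boxes of side $p^2$ contain at most one lattice point and the volume heuristic says nothing. (Also, density $O(1/p^2)$ corresponds to roughly $p^6$ residue classes modulo $p^2$, not $O(p^2)$.) (3) The cusp, which you correctly identify as the main obstacle, is then dispatched with ``the mod-$p$ congruence saves one factor of $p$ and the $p^2\mid\disc$ condition a second'' --- a restatement of the desired conclusion, not an argument. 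Moreover, the appeal to Lemma~\ref{lemma2} and Theorem~\ref{reducible} is a red herring: $N(\cW_p^\ast;X)$ counts \emph{irreducible} orbits, so reducible forms are excluded from the count at the outset; what must be bounded in the cusp are irreducible forms with small nonzero leading coefficient, and the cube-$\delta$/$3$-torsion-ideal dictionary says nothing about them. To close these gaps one needs the explicit $p$-uniform estimates of \cite{Davenport1,Davenport2} and \cite[Prop.~1]{DH} (see also the treatment in \cite{BST}), or else the class-field-theoretic input of \cite[Lem.~7]{DH} that this paper explains how to circumvent.
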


\begin{remark}{\em 
None of the proofs of the quoted results in this section use class
field theory except for \cite[Prop.~1]{DH}, which invokes one lemma
(namely, \cite[Lem.~7]{DH}) that is proved in \cite{DH} by class field
theory; however, this lemma immediately follows from our
Thms.~\ref{bcfideal} and~\ref{thmdensity}, which
do not appeal to class field theory.} 
\end{remark}


\subsection{The mean number of 3-torsion elements in the class groups
  of quadratic fields 
without class
field theory (Proof of Theorem~\ref{theoremdh})}
\label{thm1pf}

We now complete the proof of Theorem~\ref{theoremdh}. Suppose $Y$ is any
positive integer. It follows from Theorem~\ref{thmdensity} and Lemma~\ref{lemdensity} that 
	$$\lim_{X \rightarrow \infty} \frac{N^\ast(\cap_{p<Y} \cV_p \cap V_\bZ^{\ast (i)}, X)}{X} = \frac{\pi^2}{4n_i^\ast}\cdot\prod_{p<Y} \left(1-\frac{1}{p^2}\right)^2.$$
Letting $Y$ tend to $\infty$, we obtain 
	$$\limsup_{X \rightarrow \infty} \frac{N^\ast(\cV \cap
          V_\bZ^{\ast (i)}, X)}{X} \leq
        \frac{\pi^2}{4n_i^\ast}\cdot\prod_{p}
        \left(1-\frac{1}{p^2}\right)^2 =
        \frac{3}{2n_i^\ast\zeta(2)}.$$

To obtain a lower bound for $N^\ast(\cV \cap V_\bZ^{\ast (i)}, X)$, we use the fact that
	\begin{equation}\label{above}
	\bigcap_{p < Y} \cV_p \subset (\cV \cup \bigcup_{p \geq Y} \cW_p^\ast).
	\end{equation}
By Proposition~\ref{unifest} and (\ref{above}), we have 
	$$\liminf_{X \rightarrow \infty} \frac{N^\ast(\cV \cap V_\bZ^{\ast (i)}, X)}{X} \geq \frac{\pi^2}{4n_i^\ast}\cdot\prod_{p} \left(1-\frac{1}{p^2}\right)^2 - O(\sum_{p \geq Y} p^{-2}).$$
Letting $Y$ tend to $\infty$ again, we obtain
	$$\liminf_{X \rightarrow \infty} \frac{N^\ast(\cV \cap V_\bZ^{\ast (i)}, X)}{X} \geq \frac{\pi^2}{4n_i^\ast}\cdot\prod_{p} \left(1-\frac{1}{p^2}\right)^2 = \frac{3}{2n_i^\ast\zeta(2)}.$$
Thus,
	$$\lim_{X \rightarrow \infty} \frac{N^\ast(\cV \cap
          V_\bZ^{\ast (i)}, X)}{X} = \frac{9}{n_i^\ast\pi^2}.$$

Finally, we use Corollaries~\ref{hr2} and \ref{identity} to relate $N^\ast(\cV \cap V_\bZ^{\ast (i)}, X)$ and 3-torsion ideal classes in maximal quadratic rings with discriminant less than $X$:
\begin{eqnarray*}
\sum_{{\mbox{\scriptsize $0 <\Disc(\cO) < X$,}}\atop{\mbox{\scriptsize{$\cO$ maximal}}}} \bigl(3\cdot|\Cl_3(\cO)|-1\bigr) &=& 
N^\ast(\cV \cap V_\bZ^{\ast (0)}, X)
 ; \\
\sum_{{\mbox{\scriptsize $0 <-\Disc(\cO) < X$,}}\atop{\mbox{\scriptsize{$\cO$ maximal}}}} \bigl(|\Cl_3(\cO)|-1\bigr) &=& N^\ast(\cV \cap V_\bZ^{\ast (1)}, X).
\end{eqnarray*}
Since \begin{equation}\label{maxordcount}
\displaystyle{\lim_{X\rightarrow\infty}\frac{\displaystyle{\sum_{{\mbox{\scriptsize $0 <\Disc(\cO) < X$,}}\atop{\mbox{\scriptsize{$\cO$ maximal}}}} 1}}{X}} = \displaystyle{\frac{3}{\pi^2}} \qquad \mbox{and} \qquad
\displaystyle{\lim_{X\rightarrow\infty}\frac{\displaystyle{\sum_{{\mbox{\scriptsize $0 <-\Disc(\cO) < X$,}}\atop{\mbox{\scriptsize{$\cO$ maximal}}}} 1}}{X}} 
=  \displaystyle{\frac{3}{\pi^2}},
\end{equation}
we conclude
\[
\begin{array}{rcccl}
\displaystyle{
\lim_{X\rightarrow\infty}\frac{\displaystyle{\sum_{{\mbox{\scriptsize $0 <\Disc(\cO) < X$,}}\atop{\mbox{\scriptsize{$\cO$ maximal}}}} |\Cl_3(\cO)|}}
 {\displaystyle{\sum_{{\mbox{\scriptsize $0 <\Disc(\cO) < X$,}}\atop{\mbox{\scriptsize{$\cO$ maximal}}}} 1}} }
& = &
\displaystyle{\frac{1}{3}\left(1+\lim_{X\rightarrow\infty}\frac{N^\ast(\cV\cap V_\bZ^{\ast (0)};X)}
{{\displaystyle\sum_{{\mbox{\scriptsize $0 <\Disc(\cO) < X$,}}\atop{\mbox{\scriptsize{$\cO$ maximal}}}} 1}}\right)}
& = & 
\displaystyle{\frac{1}{3}\left(1+\frac{9/n_0^\ast}{3}\right)}=
\displaystyle{\frac{4}{3}},\\[.5in]
\displaystyle{
\lim_{X\rightarrow\infty}\frac{\displaystyle \sum_{{\mbox{\scriptsize $0 <-\Disc(\cO) < X$,}}\atop{\mbox{\scriptsize{$\cO$ maximal}}}}|\Cl_3(\cO)|}
 {\displaystyle{\sum_{{\mbox{\scriptsize $0 <-\Disc(\cO) < X$,}}\atop{\mbox{\scriptsize{$\cO$ maximal}}}} 1}} }
&=&
\displaystyle{
1+ \lim_{X\rightarrow\infty}\frac{N^\ast(\cV\cap V_\bZ^{\ast (1)};X)}{\displaystyle \sum_{{\mbox{\scriptsize $0 <-\Disc(\cO) < X$,}}\atop{\mbox{\scriptsize{$\cO$ maximal}}}} 1}}
&=& 
\displaystyle{1+\frac{9/n_1^\ast}{3}} = 
\displaystyle{2}. 
\end{array}
\]

\subsection{Generalization to orders}\label{noncftorders}

The above proof of Theorem~\ref{theoremdh} can be generalized to
orders to yield the special case of Theorem~\ref{diff} where we
average over all quadratic orders. This will also explain why the quantities
being averaged in Theorem~\ref{diff} arise naturally.  All the
ingredients remain the same as in the previous subsection, except that
we now replace $\cV \subset \cV^{\ast}_{\bZ}$ with the set
$\mathcal{S}$ of all {projective} integer-matrix binary cubic forms as
defined in \S2.3. 
Recall that projective forms correspond under
Theorem~\ref{bcfideal} to valid triples with an invertible
ideal. However, since $N^\ast(S,X)$ only counts irreducible orbits, by
Corollary~\ref{hr2} and Theorem~\ref{reducible}, we obtain
\begin{equation}\label{irredcountorders}
N^\ast(\mathcal{S}\cap V_{\bZ}^{\ast (i)}, X) = 
	 \begin{cases} \displaystyle\sum_{0 < \Disc(\cO) < X} 3\cdot|\Cl_3(\cO)| - \sum_{0 < \Disc(\cO) < X} |\cI_3(\cO)| & \mbox{if } i = 0, \\[.35in]
	 \displaystyle\sum_{0 < -\Disc(\cO) < X} |\Cl_3(\cO)| - \sum_{0 < -\Disc(\cO) < X} |\cI_3(\cO)| & \mbox{if } i = 1.
	 \end{cases}
\end{equation}

As before, let $Y$ be any positive integer and let $\cS_p$ denote the
$p$-adic closure of $\cS$ in $V_{\bZ_p}^\ast$, so that $\cap_p \cS_p =
\cS$. It follows from Lemma~\ref{primdensity} and Theorem~\ref{thmdensity} that 
	$$\lim_{X \rightarrow \infty} \frac{N^\ast(\cap_{p<Y} \cS_p \cap V_\bZ^{\ast (i)}, X)}{X} = \frac{\pi^2}{4n_i^\ast}\cdot\prod_{p<Y} \left(1-\frac{1}{p^2}\right).$$
Letting $Y$ tend to $\infty$ gives
	$$\limsup_{X \rightarrow \infty} \frac{N^\ast(\cS \cap V_\bZ^{\ast (i)}, X)}{X} \leq \frac{\pi^2}{4n_i^\ast}\cdot\prod_{p} \left(1-\frac{1}{p^2}\right) = \frac{3}{2n_i^\ast}.$$
Using again $\cW_p^\ast$ to denote $V_\bZ^\ast \backslash \cV_p$, we still have that
	$$\bigcap_{p<Y} \cS_p \subset (\cS \cup \bigcup_{p\geq Y} \cW_p^\ast).$$
Thus, it follows from Theorem~\ref{unifest} that 
	$$\liminf_{X \rightarrow \infty} \frac{N^\ast(\cS \cap V_\bZ^{\ast (i)}, X)}{X} \geq \frac{\pi^2}{4n_i^\ast}\cdot\prod_{p} \left(1-\frac{1}{p^2}\right) - O(\sum_{p \geq Y} p^{-2}),$$
and letting $Y$ tend to $\infty$ gives
	$$\liminf_{X \rightarrow \infty} \frac{N(\cS \cap V_\bZ^{\ast (i)}, X)}{X} \geq \frac{\pi^2}{4n_i^\ast}\cdot\prod_{p} \left(1-\frac{1}{p^2}\right) = \frac{3}{2n_i^\ast}.$$
Thus
	$$\lim_{X\rightarrow\infty} \frac{N^\ast(\cS \cap V_{\bZ}^{\ast (i)},X)}{X} = \frac{3}{2n_i^\ast}.$$
Since
\begin{equation}\label{disc}
\displaystyle{\lim_{X\rightarrow\infty}\frac{\displaystyle \sum_{{0<\Disc(\O)<X}} 1}{X}} 
=  \displaystyle{\frac{1}{2}} \qquad \mbox{and} \qquad
\displaystyle{\lim_{X\rightarrow\infty}\frac{\displaystyle\sum_{{0<-\Disc(\O)<X}} 1}{X}} =  \displaystyle{\frac{1}{2}},
\end{equation}
by (\ref{irredcountorders}) we conclude that
\begin{equation}\label{difforders}
\begin{array}{ccccl}
\displaystyle{
\lim_{X\rightarrow\infty}\frac{\displaystyle{\sum_{{0<\Disc(\O)<X}} |\Cl_3(\cO)| - \frac{1}{3}|\cI_3(\cO)|}}
 {\displaystyle{\sum_{{0<\Disc(\O)<X}} 1}} }
&\!\!=\!\!&
\displaystyle{\frac{1}{3}\left(\frac{\displaystyle \frac{3}{2n_0^\ast}}{\displaystyle\frac{1}{2}}\right)}&\!\!=\!\!&
\displaystyle{1}, \quad \mbox{ and}\\[.5in]
\displaystyle{
\lim_{X\rightarrow\infty}\frac{\displaystyle\sum_{{0<-\Disc(\O)<X}}|\Cl_3(\cO)| - |\cI_3(\cO)|}
 {\displaystyle\sum_{{0<-\Disc(\O)<X}} 1} }
&\!\!=\!\!&
\displaystyle{\frac{\displaystyle\frac{3}{2n_1^\ast}}{\displaystyle\frac{1}{2}}} &\!\!=\!\!& 
\displaystyle{1}.
\end{array}
\end{equation}
This proves Theorem~\ref{diff} 
in the case that $\Sigma$ 
is the set of all isomorphism classes of quadratic
orders.

In the next section, we will count also the reducible
$\SL_2(\bZ)$-orbits of $\cS \cap V_{\bZ}^{\ast (i)}$ having bounded
reduced discriminant, which will establish the mean total number of
3-torsion elements in the class groups of imaginary quadratic
orders and of real quadratic orders, as stated in Theorem~\ref{thmorders}.

\section{The mean number of 3-torsion elements in the ideal 
  groups of quadratic orders (Proofs of Theorems~\ref{thmorders} and~\ref{sigmaid})}

We have seen in \S\ref{noncftorders} that counting irreducible orbits
of integer-matrix binary cubic forms and using the correspondence described in
Theorem~\ref{bcfideal} is not enough to conclude Theorem~2.  In addition, 
Theorem \ref{reducible} shows that in order to establish
Theorem~\ref{sigmaid}, we must compute the number of {\em reducible}
integer-matrix binary cubic forms, up to the action of $\SL_2(\bZ)$,
having bounded reduced discriminant. In \cite{Davenport1,Davenport2},
Davenport computed the number of $\SL_2(\bZ)$-equivalence classes of
irreducible integer-coefficient binary cubic forms of bounded
non-reduced discriminant. In this section, we similarly count
reducible integer-matrix forms with bounded reduced discriminant and
establish the following result, from which both
Theorems~\ref{thmorders} and \ref{sigmaid} follow.

\begin{proposition} \label{redprop}
	Let $h_{\proj,\red}(D)$ denote the number of
        $\SL_2(\bZ)$-equivalence classes of projective and reducible
        integer-matrix binary cubic forms of reduced discriminant $D$. Then
\[
	\begin{array}{ccccl}
	\displaystyle{\sum_{0 < \Disc(\cO) < X}} |H_\red(\cO)| &=& \displaystyle{\sum_{0<D<X}} h_{\proj,\red}(D) &=& \displaystyle{\frac{\zeta(2)}{2\zeta(3)}}\cdot X + o(X) \quad \mbox{and} \\
	\displaystyle{\sum_{0 < -\Disc(\cO) < X}} |H_\red(\cO)| &=& \displaystyle{\sum_{0<-D<X} h_{\proj,\red}(D)} &=& \displaystyle{\frac{\zeta(2)}{2\zeta(3)}}\cdot X + o(X).
	\end{array}
\]
\end{proposition}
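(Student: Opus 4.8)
The plan is to prove the proposition by counting projective reducible integer-matrix forms directly, in the spirit of Davenport's analysis of the cusp of the fundamental domain. The two asserted equalities with $\sum h_{\proj,\red}(D)$ and $\sum|H_\red(\cO)|$ are formal: by the $\SL_2(\bZ)$-correspondence of Theorem~\ref{bcfideal}, $h_{\proj,\red}(D)$ is exactly the number of reducible classes in $H(\cO)$ for the order $\cO$ of discriminant $D$, i.e.\ $|H_\red(\cO)|$, and by Theorem~\ref{reducible} this equals $|\cI_3(\cO)|$. So the entire content is to show that the number $N_\red^{(i)}(X)$ of $\SL_2(\bZ)$-classes of projective reducible forms with $0<|\disc|<X$ and the sign of $\disc$ determined by $i$ satisfies $N_\red^{(i)}(X)=\frac{\zeta(2)}{2\zeta(3)}X+o(X)$ for each $i$.

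First I would normalize. A reducible nondegenerate form has at least one rational zero; acting by $\SL_2(\bZ)$ I may move a chosen primitive zero to $(0\!:\!1)$, forcing $d=0$ and writing the form as $f=x(ax^2+3bxy+3cy^2)$ with $c\neq0$. In these coordinates $\disc(f)=c^2(4ac-3b^2)$, and the projectivity criterion (\ref{projbcf}) reduces to $\gcd(b,c)=1$. The residual symmetry is precisely the stabilizer of $(0\!:\!1)$, the ``cusp group'' generated by $(a,b,c)\mapsto(a+3bq+3cq^2,\,b+2cq,\,c)$ for $q\in\bZ$ together with $(a,b,c)\mapsto(-a,-b,-c)$; the first acts on $b$ by translation in $2c\bZ$, and the second can be used to fix the sign of $c$. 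Thus $\SL_2(\bZ)$-orbits of \emph{root-marked} projective reducible forms biject (up to the negligible set of forms with nontrivial stabilizer, i.e.\ $U_3(\cO_0)\neq1$, essentially $\cO=\bZ[\zeta_3]$ by Corollary~\ref{stab}) with integer triples $(a,b,c)$ satisfying $c>0$, $0\le b<2c$, $\gcd(b,c)=1$, with $a$ free.

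Next I would carry out the lattice-point count. For fixed $b,c$ the condition $0<|\disc|<X$ confines $a$ to an interval of length $X/(2c^3)$ centred at $3b^2/(4c)$, on which $\disc$ takes each sign on half the interval; hence the number of admissible $a$ of a given sign is $X/(4c^3)+O(1)$. Summing over the $2\phi(c)$ admissible residues $b\bmod 2c$ and over $c$ gives, for each sign,
\[
\widetilde N^{(i)}(X)=\sum_{c\ge1}2\phi(c)\cdot\frac{X}{4c^3}+o(X)=\frac{X}{2}\sum_{c\ge1}\frac{\phi(c)}{c^3}+o(X)=\frac{\zeta(2)}{2\zeta(3)}\,X+o(X),
\]
using $\sum_{c}\phi(c)c^{-3}=\zeta(2)/\zeta(3)$; note that the coprimality condition is exactly what produces the factor $1/\zeta(3)$. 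Truncating the $c$-sum at $c\le X^{1/3}$ balances the tail $O(X/c)$ against the accumulated $O(1)$ errors $O(\sum_{c\le X^{1/3}}c)$, keeping everything within $o(X)$.

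The main obstacle is the passage from this root-marked count $\widetilde N^{(i)}$ to the honest class count $N_\red^{(i)}$, i.e.\ the overcounting of forms possessing several rational zeros. When $\disc>0$ (the real quadratic case $i=0$) one has $\Disc(f)=-27\disc(f)<0$, so $f$ has a unique real, hence unique rational, zero, and $N_\red^{(0)}=\widetilde N^{(0)}=\frac{\zeta(2)}{2\zeta(3)}X+o(X)$. When $\disc<0$ (the imaginary case $i=1$) the form has three real zeros, hence one or three rational ones, so $\widetilde N^{(1)}=N_\red^{(1)}+2N_{\mathrm{split}}^{(1)}$, where $N_{\mathrm{split}}^{(1)}$ counts completely split forms; I would show this error is negligible. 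Indeed, for a completely split form in the normalized shape the quadratic factor splits rationally, which forces $3b^2-4ac=3t^2$ and hence $|\disc|=3c^2t^2$; counting $(c,t)$ with $3c^2t^2<X$ together with the $O(c^{\epsilon})$ residues $b\bmod 2c$ for which $a=3(b^2-t^2)/(4c)$ is integral yields $N_{\mathrm{split}}^{(1)}=O(X^{1/2+\epsilon})=o(X)$. Combining gives $N_\red^{(1)}=\frac{\zeta(2)}{2\zeta(3)}X+o(X)$, completing the proof. As anticipated in the introduction, the genuine difficulty is entirely concentrated in the cusp: estimating the lattice count uniformly in $c$ and correctly discarding the multiply-rooted forms.
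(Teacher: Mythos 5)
Your proposal is correct in outline and takes a genuinely different route from the paper. The paper follows Davenport: it attaches to each form a \emph{positive-definite} quadratic form (the Hessian covariant when $\disc<0$, and the ``definite quadratic factor'' $(y-\alpha x)(Px^2+Qxy+Ry^2)$ when $\disc>0$), reduces that form to the Gauss fundamental domain, invokes Davenport's lemmas to discard reduced representatives with $a\neq 0$ (an $O(X^{3/4+\epsilon})$ error) and the boundary cases $A=C$, counts the $a=0$, $b>0$ representatives, and finally imposes projectivity by M\"obius inversion over $n\mid(b,c)$. This forces two parallel treatments, one for each sign of the discriminant, but the reduction-theoretic inequalities automatically confine the free parameters (e.g.\ $b\ll X^{1/4}$), so all error terms are trivially summable, and each orbit is counted exactly once, so no root-multiplicity issue ever arises. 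Your route instead marks a rational root, moves it to the cusp, and quotients by the cusp stabilizer; this is uniform in the sign of $\disc$, avoids reduction theory of quadratic forms entirely, bakes the coprimality into the Euler-$\phi$ sum $\sum\phi(c)/c^3=\zeta(2)/\zeta(3)$, and is arguably more transparent. Its costs are exactly the two points you identify: the passage from root-marked orbits to honest orbits (your split-form estimate $O(X^{1/2+\epsilon})$, which is fine, granting the standard divisor bound on square roots modulo $4c$), and the unbounded range of $c$.

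The one genuine gap is the tail of the $c$-sum. Your per-pair count $X/(4c^3)+O(1)$ is valid for every $(b,c)$, but the $O(1)$'s cannot be summed over all $c$, and your remark that truncation at $c\le X^{1/3}$ leaves a tail ``$O(X/c)$'' silently bounds the \emph{actual number of lattice points} with $c>X^{1/3}$ by the tail of the \emph{main term}, which is not justified: a priori the residues $3b^2 \bmod 4c$ could cluster near $0$, and indeed the tail is not empty (e.g.\ the forms $3bx^2y+3cxy^2$ with $\gcd(b,c)=1$, $3b^2c^2<X$ have $c$ as large as $\sqrt{X/3}$). What is true is that for $c>X^{1/3}$ each pair $(b,c)$ contributes at most one $a$, and it contributes at all only when $3b^2\bmod 4c$ lies in an interval of length $X/c^2$; applying the same $O(c^\epsilon)$ bound on the number of $b\in[0,2c)$ with $3b^2$ in a prescribed residue class mod $4c$ that you already invoke for the split count, and noting $c\le\sqrt{X}$ since $|\disc|\ge c^2$, the tail is $\ll \sum_{X^{1/3}<c\le \sqrt X}\,(X/c^2+1)\,c^\epsilon\ll X^{2/3+\epsilon}=o(X)$. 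So the gap is fixable by a tool you already use; it is simply not optional, and it is precisely the step that the paper's fundamental-domain inequalities render unnecessary.
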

Recall that by definition, if $\cO$ is the quadratic ring of discriminant $D$, then $|H_{\red}(\cO)| = h_{\proj,\red}(D).$ 

By Theorem~\ref{reducible}, (\ref{disc}), and 
Proposition~\ref{redprop}, we then obtain:

\begin{corollary}[Theorem~\ref{sigmaid}] Let $\cI_3(\cO)$ denote the $3$-torsion subgroup of the ideal group of the quadratic order $\cO$. Then
	\[
\displaystyle{
\lim_{X\rightarrow\infty}\frac{\displaystyle{\sum_{{0<\Disc(\O)<X}} |\cI_3(\cO)|}}
 {\displaystyle{\sum_{{0<\Disc(\O)<X}} 1}} }
=
\displaystyle{\frac{\zeta(2)}{\zeta(3)}} \qquad \mbox{and} \qquad
\displaystyle{
\lim_{X\rightarrow\infty}\frac{\displaystyle\sum_{{0<-\Disc(\O)<X}}|\cI_3(\cO)|}
 {\displaystyle\sum_{{0<-\Disc(\O)<X}} 1} }
=
\displaystyle{\frac{\zeta(2)}{\zeta(3)}}.
\]
\end{corollary}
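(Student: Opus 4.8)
The plan is to prove Proposition~\ref{redprop} by counting projective reducible integer-matrix forms directly, following Davenport's treatment of the cusp of a fundamental domain. By Lemma~\ref{lemma2} and Theorem~\ref{reducible} the forms we must count are exactly those whose associated $\delta$ is a cube, and $|H_{\red}(\cO)|=|\cI_3(\cO)|$; but for the count itself the operative fact is simply that a reducible nondegenerate form has a rational zero. Since $\SL_2(\bZ)$ acts transitively on $\bP^1(\bQ)$, I would move a chosen rational root to $[1:0]$, normalizing $a=0$, so that $f=3bx^2y+3cxy^2+dy^3$. A nondegenerate reducible form has either one rational root or three; the fully split forms have square reduced discriminant and hence number $o(X)$, so I would set them aside and count the generic single-root forms, each of which carries a canonical marked root. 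The stabilizer of $[1:0]$ in $\SL_2(\bZ)$ is $\{\pm u_r\}$ with $u_r=\left(\begin{smallmatrix}1&0\\r&1\end{smallmatrix}\right)$, and a direct computation shows $u_r$ fixes $b$ and sends $c\mapsto c+2br$, $d\mapsto d+3cr+3br^2$, while $-I$ negates $f$ and so flips the sign of $b$.

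Next I would record the invariant theory of this action. From (\ref{discdef}), setting $a=0$ gives $\disc(f)=b^2(4bd-3c^2)$, so $m:=4bd-3c^2$ is a stabilizer invariant and $\disc(f)=b^2m$. The Hessian (\ref{defABC}) becomes $Q=b^2x^2-bc\,xy+(c^2-bd)y^2$ with $\disc(Q)=\disc(f)$, and a short gcd computation shows that, for $a=0$, the primitivity condition (\ref{projbcf}) is equivalent to $\gcd(b,c)=1$. Thus, away from the split forms and the single exceptional discriminant $D=-3$ (where Corollary~\ref{stab} supplies an extra automorphism), the projective reducible $\SL_2(\bZ)$-orbits with $0<|\disc|<X$ are in bijection with triples $(b,c,d)$ having $b>0$, $\gcd(b,c)=1$, $c$ taken modulo $2b$, and $d\in\bZ$ free, subject to $0<\pm\,b^2(4bd-3c^2)<X$ in the two sign cases.

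The count then proceeds by summing over $b$. For each $b>0$ there are $2\phi(b)$ residues $c\bmod 2b$ with $\gcd(b,c)=1$, and for each such $c$ the number of admissible $d$ is the number of integers in an interval of length $\tfrac{X}{4b^3}$ (the same length for both signs of the discriminant). The main term is therefore
\[
\frac{X}{2}\sum_{b\ge 1}\frac{\phi(b)}{b^3}=\frac{X}{2}\cdot\frac{\zeta(2)}{\zeta(3)}=\frac{\zeta(2)}{2\zeta(3)}\,X,
\]
identically in the real and imaginary cases, and I would then combine this with the identity $|H_{\red}(\cO)|=h_{\proj,\red}(\Disc(\cO))$ to obtain both displayed asymptotics. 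The $\pm$ and orientation factors, together with the negligible exceptional loci, are routine constant-bookkeeping.

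The main obstacle is the error analysis, exactly as in Davenport's original count. Counting $d$ in its interval incurs an $O(1)$ error for each pair $(b,c)$, and summed trivially over the $\sum_{b\le\sqrt X}\phi(b)$ pairs this is $O(X)$, the same size as the main term. To do better I would (i) truncate the $b$-sum at some $B_0$, bounding the dropped main-term tail by $O(X/B_0)$, and (ii) for $b\le B_0$ exploit cancellation in the fractional-part errors: as $c$ runs over residues modulo $2b$ the quadratic phase $3c^2/(4b)$ equidistributes, and a Gauss-sum (Weyl) estimate yields square-root cancellation of the shape $\sum_{c\bmod 2b}\theta_c=O(b^{1/2+\epsilon})$, so the accumulated error is $O\!\left(\sum_{b\le B_0}b^{1/2+\epsilon}\right)=O(B_0^{3/2+\epsilon})$; optimizing $B_0$ then gives a total error $o(X)$. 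Separately, I must confirm that the fully split forms (square discriminant) and the $D=-3$ contribution are genuinely $o(X)$. This cuspidal cancellation argument, rather than the invariant-theoretic setup, is where the real work lies.
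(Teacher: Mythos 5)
Your algebraic setup is correct, and it is genuinely different from the paper's. The paper proves Proposition~\ref{redprop} via Davenport's reduction theory: up to $O(X^{3/4+\epsilon})$ exceptions, each reducible class has a unique representative with $a=0$, $b>0$ whose Hessian covariant (negative discriminant) or definite quadratic factor (positive discriminant) is Gauss-reduced; those inequalities force $b\ll X^{1/4}$ and $|c|\le b$, so there are only $O(X^{1/2})$ pairs $(b,c)$, the trivial $O(1)$ error per pair sums to $O(X^{3/4})$, and projectivity $(b,c)=1$ is sieved by M\"obius exactly as you propose. Your cusp parametrization --- orbits of reducible forms with a marked root $\leftrightarrow$ triples $(b,\,c \bmod 2b,\,d)$ with $b>0$, $\disc=b^2(4bd-3c^2)$, projectivity $\Leftrightarrow\gcd(b,c)=1$, main term $\tfrac{X}{2}\sum_b\phi(b)/b^3=\tfrac{\zeta(2)}{2\zeta(3)}X$ --- is correct, and it has the attractive feature of treating both signs of the discriminant simultaneously, where the paper must run two separate reductions. (Incidentally, since you use an orbit-set bijection rather than a fundamental-domain volume count, stabilizers are irrelevant and the $D=-3$ caveat is unnecessary.) The trade-off is that your fundamental domain for the cusp is long and thin, and that is where your write-up has a genuine gap.

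The gap is step (i). In your parametrization the only constraint on $b$ is $b^2\le b^2|4bd-3c^2|=|\disc|<X$, so $b$ runs up to $X^{1/2}$, not $X^{1/4}$. For $b>X^{1/3}$ the $d$-interval has length $X/(4b^3)<1$, and the count for a given $(b,c)$ becomes the $0/1$ question of whether $3c^2 \bmod 4b$ lies in a prescribed interval of length about $X/b^2<4b$; trivially every $c$ mod $2b$ might contribute, so the range $X^{1/3}<b<X^{1/2}$ contributes $O\bigl(\sum_b 2b\bigr)=O(X)$ by trivial bounds --- the same size as the main term. Thus ``bounding the dropped main-term tail by $O(X/B_0)$'' is not bookkeeping: without an equidistribution input, the true count in the tail is not dominated by its main term, and this deep-cusp range is precisely where the difficulty of your approach is concentrated, not the range $b\le B_0$. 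The repair is to run your Gauss-sum estimate uniformly over all $b<X^{1/2}$ (the sawtooth identity for counting integers in an interval is valid for intervals of length less than $1$, so nothing breaks), or to bound the tail by a divisor-type estimate for the number of roots of $3c^2\equiv -m \pmod{4b}$ summed over $0<|m|<X/b^2$; either gives $O(X^{3/4+\epsilon})$ total error, and then your argument closes. A second, smaller lacuna: the totally split forms, which your marked-root bijection counts up to three times, must be bounded as a count of \emph{classes}, not of discriminants; sparsity of discriminants with $-27\disc$ a square says nothing about how many classes each such discriminant carries. That count is again a congruence count ($s^2\equiv 9c^2 \pmod{12b}$ with $|s|<\sqrt{3X}/b$) and is $O(X^{3/4+\epsilon})$, but it needs an argument, not an appeal to sparsity.
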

Finally, combining Theorem~\ref{sigmaid} with (\ref{difforders}), we conclude: 
\begin{corollary}[Theorem~\ref{thmorders}]
\[
\lim_{X \rightarrow \infty} \frac{\displaystyle\sum_{0<\Disc(\O)<X} |\Cl_3(\cO)|}{\displaystyle\sum_{0<\Disc(\O)<X} 1} 
	= \displaystyle 1 + \frac{1}{3}\cdot\frac{\zeta(2)}{\zeta(3)}, \quad \mbox{and} \quad
\lim_{X \rightarrow \infty} \frac{\displaystyle\sum_{0<-\Disc(\O)<X} |\Cl_3(\cO)|}{\displaystyle\sum_{0<-\Disc(\O)<X} 1} =  \displaystyle 1 + \frac{\zeta(2)}{\zeta(3)}.
\]
\end{corollary}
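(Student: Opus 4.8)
The plan is to obtain Theorem~\ref{thmorders} purely by combining the two averages already established: the mean value of $|\cI_3(\cO)|$ from Theorem~\ref{sigmaid}, and the mean values of the differences $|\Cl_3(\cO)|-\tfrac13|\cI_3(\cO)|$ and $|\Cl_3(\cO)|-|\cI_3(\cO)|$ recorded in~(\ref{difforders}). The key observation is simply the linearity of the averaging operation over a fixed family of orders: since the denominators $\sum 1$ are identical in all three averages, I may split the numerator $\sum|\Cl_3(\cO)|$ into two pieces whose averages are separately known, and then add the resulting limits.

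For real quadratic orders (positive discriminant), I would write
\[
\sum_{0<\Disc(\cO)<X}|\Cl_3(\cO)| = \sum_{0<\Disc(\cO)<X}\Bigl(|\Cl_3(\cO)|-\tfrac13|\cI_3(\cO)|\Bigr) + \tfrac13\!\!\sum_{0<\Disc(\cO)<X}\!\!|\cI_3(\cO)|,
\]
divide through by $\sum_{0<\Disc(\cO)<X}1$, and let $X\to\infty$. By the first line of~(\ref{difforders}) the average of the first summand tends to $1$, and by Theorem~\ref{sigmaid} the average of the second tends to $\tfrac13\cdot\frac{\zeta(2)}{\zeta(3)}$; since both limits exist, the left-hand average converges to their sum $1+\tfrac13\cdot\frac{\zeta(2)}{\zeta(3)}$. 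The imaginary case is identical except that the coefficient is $1$ rather than $\tfrac13$: decomposing $|\Cl_3(\cO)|=\bigl(|\Cl_3(\cO)|-|\cI_3(\cO)|\bigr)+|\cI_3(\cO)|$ and invoking the second line of~(\ref{difforders}) together with Theorem~\ref{sigmaid} yields the average $1+\frac{\zeta(2)}{\zeta(3)}$.

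There is essentially no obstacle here beyond bookkeeping, since the substantive analytic input has already been isolated: the irreducible-form count feeds~(\ref{difforders}), while the reducible-form count of Proposition~\ref{redprop} feeds Theorem~\ref{sigmaid}. The one point worth flagging is that the two sign cases carry different coefficients ($\tfrac13$ versus $1$), which traces back to the factor $n_i^\ast\in\{1,3\}$ distinguishing positive and negative reduced discriminants in Theorem~\ref{thmdensity}; one must therefore pair the correct line of~(\ref{difforders}) with the matching coefficient in the decomposition. Because Theorem~\ref{sigmaid} returns the same value $\frac{\zeta(2)}{\zeta(3)}$ for both signs, this coefficient is the sole source of the asymmetry between the real and imaginary answers, and the desired limits follow immediately.
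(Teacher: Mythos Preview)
Your proposal is correct and matches the paper's own argument exactly: the paper simply states that combining Theorem~\ref{sigmaid} with~(\ref{difforders}) yields the corollary, and your write-up is nothing more than a careful spelling-out of that combination via linearity of the average.
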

We now turn to the proof of Proposition~\ref{redprop}.

\subsection{Counting reducible forms of negative reduced discriminant}

We first consider the case of negative reduced discriminant, when the quadratic
Hessian covariant of a binary cubic form is positive-definite.
Gauss described a fundamental domain for
the action of~$\SL_2(\bZ)$ on positive-definite real binary quadratic forms
in terms of inequalities on their coefficients. This allows us to
describe an analogous fundamental domain for the action of~$\SL_2(\bZ)$ on real binary cubic forms
of negative reduced discriminant. Bounding the reduced discriminant
cuts out a region of the fundamental domain which can be described
via suitable bounds on the coefficients of the binary cubic forms  (cf.\ Lemma
\ref{funddomain}). Within this region, we show that the number of
$\SL_2(\bZ)$-classes of reducible integer-matrix cubic forms of
bounded reduced discriminant can be computed, up to a negligible error
term, by counting the number of integer-matrix binary cubic forms $f(x,y)$ in the region whose
$x^3$-coefficient is zero and $x^2y$-coefficient is positive.
We then carry out the latter count explicitly.

\subsubsection{Reduction theory}

Recall that if $f(x,y) = ax^3 + 3bx^2y + 3cxy^2 + dy^3$ is a binary cubic form where $a$, $b$, $c$, $d \in \bZ$, then there is a canonically associated quadratic form $Q$, called the {\em quadratic $($Hessian$)$ covariant} of $f$, with coefficients defined by (\ref{defABC}): 
	\begin{equation}\label{quadcov}
	 Q(x,y) = Ax^2 + Bxy + Cy^2 \quad \mbox{where} \quad A = b^2 - ac, \quad B = ad - bc, \quad \mbox{and} \quad C = c^2 - bd.
	\end{equation}
Note that $\Disc(Q) = \disc(f)$, so if $\disc(f)$ is negative, then its
quadratic covariant is definite. 
The group $\SL_2(\bZ)$ acts on the set of positive-definite real binary quadratic forms, and it is well known that a fundamental domain for this action consists of those quadratic forms whose coefficients satisfy 
	\begin{equation}\label{reduced}
	-A < B \leq A < C \qquad \mbox{or} \qquad 0 \leq B \leq A = C.
	\end{equation}
We call a binary quadratic form whose coefficients satisfy (\ref{reduced}) {\em reduced}. Any binary cubic form of negative reduced discriminant is $\SL_2(\bZ)$-equivalent to one whose quadratic covariant is \emph{reduced}. Furthermore, if two such binary cubic forms are equivalent under $\SL_2(\bZ)$ and both have quadratic covariants that are reduced, then their quadratic covariants are equal. The automorphism group of a reduced quadratic form always includes the identity matrix $\Id_2$ and its negation $-1\cdot\Id_2$. In all but two cases, this is the full automorphism group (the binary quadratic form $x^2 + y^2$ has two more distinct automorphisms while $x^2 + xy + y^2$ has 4 more distinct automorphisms). 

We now describe bounds on the coefficients of a binary cubic form $f$ with reduced quadratic covariant $Q$ satisfying $0 < -\Disc(Q) < X$.
\begin{lemma}[{\bf\cite[Lem.~1]{Davenport1}}]\label{funddomain} Let $a$, $b$, $c$, $d$ be real numbers, and let $A$, $B$, $C$ be defined as in $(\ref{quadcov})$. Suppose that
	\begin{equation}\label{reduced2}
	-A < B \leq A \leq C \qquad \mbox{and} \qquad  0 < 4AC - B^2 < X .
	\end{equation}
Then $$|a| < \frac{\sqrt{2}}{\sqrt[4]{3}}\cdot X^{1/4} \qquad |b| < \frac{\sqrt{2}}{\sqrt[4]{3}}\cdot X^{1/4}$$
	$$|ad| < \frac{2}{\sqrt{3}}\cdot X^{1/2} \qquad |bc| < \frac{2}{\sqrt{3}}\cdot X^{1/2}$$
	$$|ac^3| < \frac{4}{3}\cdot X \qquad |b^3d| < \frac{4}{3}\cdot X$$
	$$|c^2(bc - ad)| < X.$$
\end{lemma}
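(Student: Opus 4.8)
The plan is to read the lemma as a reduction-theory estimate for the positive-definite quadratic covariant $Q(x,y)=Ax^2+Bxy+Cy^2$ of~(\ref{quadcov}), and to transfer size information about $A,B,C$ to the coefficients $a,b,c,d$ by means of two algebraic identities. Throughout I would set $\Delta:=4AC-B^2=-\disc(f)$, so that the hypothesis~(\ref{reduced2}) reads $|B|\le A\le C$ (forcing $A>0$, hence $C>0$) together with $0<\Delta<X$; note also $B^2\le A^2\le AC$, whence $\Delta\ge 3A^2$ and $\Delta\ge 3AC$. These two inequalities already yield the ``global'' bounds $A<X^{1/2}/\sqrt{3}$ and $AC<X/3$ that drive everything.

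The key observation I would isolate first is that $(C,B,A)$ annihilates the Hankel matrix of $f$, i.e.\ the identities
\[ aC+bB+cA=0,\qquad bC+cB+dA=0 \]
hold identically, as one checks by expanding $A=b^2-ac$, $B=ad-bc$, $C=c^2-bd$. I would then feed these back into the definitions of $A$ and $C$: solving the first identity for $c$ and substituting into $A=b^2-ac$ gives $Ca^2+Bab+Ab^2=A^2$, that is $Q(b,a)=A^2$; solving the second for $b$ and substituting into $C=c^2-bd$ gives $Cc^2+Bcd+Ad^2=C^2$, that is $Q(d,c)=C^2$.

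With these identities in hand, the coefficient bounds follow from positive-definiteness. Completing the square gives $Q(x,y)\ge\frac{\Delta}{4A}y^2$ and $Q(x,y)\ge\frac{\Delta}{4C}x^2$; applying these to $Q(b,a)=A^2$ and $Q(d,c)=C^2$ produces the four ``diagonal'' estimates
\[ a^2\le\frac{4A^3}{\Delta},\qquad b^2\le\frac{4A^2C}{\Delta},\qquad c^2\le\frac{4AC^2}{\Delta},\qquad d^2\le\frac{4C^3}{\Delta}. \]
Now $\Delta\ge 3A^2$ turns the first two into $a^2,b^2\le\frac43 A<\frac{4}{3\sqrt3}X^{1/2}$, which is below the claimed $X^{1/4}$-bound on $|a|,|b|$. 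Multiplying the estimates for $a^2,d^2$ (resp.\ $b^2,c^2$) and using $\Delta\ge 3AC$ gives $|ad|,|bc|\le\frac43\sqrt{AC}<\frac43\sqrt{X/3}$, which is below the claimed $\frac{2}{\sqrt3}X^{1/2}$. Finally $\Delta\ge 3AC$ also gives $c^2\le\frac43 C$; combined with $|ac|=|b^2-A|\le A$ and $|bd|=|c^2-C|\le C$ (immediate from $0\le b^2\le\frac43 A$ and $0\le c^2\le\frac43 C$) and with $|bc-ad|=|B|\le A$, I get $|ac^3|=|ac|\,c^2\le\frac43 AC$, $|b^3d|=b^2|bd|\le\frac43 AC$, and $|c^2(bc-ad)|=c^2|B|\le\frac43 AC$, each at most $\frac49 X$, hence below $\frac43 X$ and $X$ respectively.

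The only genuinely non-mechanical step is the first one: recognizing and verifying the linear syzygies $aC+bB+cA=bC+cB+dA=0$ and converting them into the quadratic identities $Q(b,a)=A^2$, $Q(d,c)=C^2$. Once these are available, the remaining work is just the completing-the-square inequality for a positive-definite form together with the elementary reduction bounds $\Delta\ge 3A^2$ and $\Delta\ge 3AC$, and all seven displayed estimates drop out by bookkeeping. I therefore expect no real obstacle beyond locating the identities; since the constants in the statement are not sharp, there is ample slack to absorb the various $\le$-versus-$<$ comparisons.
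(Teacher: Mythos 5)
Your proposal is correct. Note that the paper itself gives no proof of this lemma at all: it is imported verbatim (with the constants adjusted to the integer-matrix/reduced-discriminant normalization) from Davenport's paper \cite{Davenport1}, so the only comparison available is with Davenport's original argument --- and your proof is essentially that classical argument, reconstructed from scratch. The two syzygies $aC+bB+cA=0$ and $bC+cB+dA=0$ do hold identically (direct expansion), the resulting quadratic identities $Q(b,a)=Ab^2+Bab+Ca^2=(b^2-ac)^2=A^2$ and $Q(d,c)=Ad^2+Bcd+Cc^2=(c^2-bd)^2=C^2$ check out, and your reduction inequalities $\Delta\ge 3A^2$, $\Delta\ge 3AC$, $A>0$, $C>0$ are exactly right (Davenport's own proof likewise completes the square in these identities, e.g.\ in the form $4A^3=(2Ab+Ba)^2+\Delta a^2$, which is your inequality $a^2\le 4A^3/\Delta$ made explicit). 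Two small points worth recording: first, your phrase ``solving the first identity for $c$'' presupposes $A\ne 0$, but since you have already shown $A>0$ this is harmless, and in any case the identities $Q(b,a)=A^2$, $Q(d,c)=C^2$ are polynomial identities verifiable by expansion with no division; second, your constants are strictly stronger than the ones stated --- you get $|a|,|b|\le \frac{2}{3^{3/4}}X^{1/4}$ versus the claimed $\frac{\sqrt2}{3^{1/4}}X^{1/4}$, and $|ad|,|bc|\le\frac{4}{3\sqrt3}X^{1/2}$ versus $\frac{2}{\sqrt3}X^{1/2}$ --- and your constant for $|a|$ is in fact sharp (e.g.\ $(a,b,c,d)=(2,-1,-1,2)$ gives $A=B=C=3$, $\Delta=27=3A^2$, $a^2=\frac43A$), so the lemma as stated follows with room to spare.
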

 	

Note that in the previous lemma, we have included some non-reduced quadratic forms, specifically when $A = C$. However, such cases are negligible by the following lemma:
	
\begin{lemma}[{\bf\cite[Lem.~2]{Davenport1}}] 
The number of integral binary cubic forms satisfying 
	$$-A < B \leq A \leq C \quad \mbox{and} \quad 0 < 4AC - B^2 < X$$ such that $A = C$ is $O(X^{\frac{3}{4}}\log X).$
\end{lemma}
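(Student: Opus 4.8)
The plan is to eliminate the coefficient $d$ using the defining relation $A=C$, thereby reducing the problem to counting integer triples $(a,b,c)$, and then to read off the stated bound from a single harmonic sum. I would begin by recording the crude size constraints that the hypotheses impose. Since the quadratic covariant is reduced we have $-A<B\le A=C$, whence $B^2\le A^2$ and therefore $3A^2\le 4A^2-B^2=4AC-B^2<X$; this gives $0<A<\sqrt{X/3}$. Moreover $A=b^2-ac>0$, and by Lemma~\ref{funddomain} (specialized to the present case) the outer coefficients satisfy $|a|,|b|<c_1X^{1/4}$ for an absolute constant $c_1$.

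The key observation is that when $b\neq 0$ the equation $A=C$, namely $b^2-ac=c^2-bd$, determines $d=(c^2+ac-b^2)/b$ uniquely in terms of $(a,b,c)$. Hence each form counted with $b\neq 0$ is pinned down by its triple $(a,b,c)$, and it suffices to bound the number of such triples; crucially, I would \emph{not} impose integrality of the resulting $d$, since counting every $(a,b,c)$ once can only overcount and is exactly what yields a clean estimate. In the main range $a,b\neq 0$, the inequality $0<A=b^2-ac<\sqrt{X/3}$ forces $ac$ into an interval of length $\sqrt{X/3}$, so for each pair $(a,b)$ the value $c$ lies in an interval of length $\sqrt{X/3}/|a|$, giving at most $O(1+\sqrt X/|a|)$ admissible integers $c$. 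Summing over the $O(X^{1/4})$ choices of $b$ and over $a$ then gives
\[
\sum_{1\le|a|\le c_1X^{1/4}}\ \sum_{|b|\le c_1X^{1/4}}O\Bigl(1+\frac{\sqrt X}{|a|}\Bigr)=O(X^{1/2})+O\Bigl(X^{1/4}\sqrt X\sum_{1\le|a|\le c_1X^{1/4}}\frac1{|a|}\Bigr)=O(X^{3/4}\log X),
\]
where the factor $\log X$ arises precisely from the harmonic sum $\sum_{|a|\le c_1X^{1/4}}1/|a|=O(\log X)$.

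It then remains to dispose of the two degenerate strips where the elimination of $d$ breaks down. If $a=0$, then $A=b^2$ forces $|b|=O(X^{1/4})$, and $|B|=|bc|\le A=b^2$ gives $|c|\le|b|$, so there are $O(X^{1/2})$ triples in total (with $d$ determined). If $b=0$ with $a\neq 0$, then $A=C$ forces $c=-a$, and $|B|=|ad|\le A=a^2$ gives $|d|\le|a|=O(X^{1/4})$, again $O(X^{1/2})$ in total. Adding the three contributions yields the asserted bound $O(X^{3/4}\log X)$. The main obstacle, as this outline shows, is not the dominant range but the bookkeeping on these boundary loci $a=0$ and $b=0$: the relation $A=C$ only solves for $d$ when $b\neq 0$, so each must be treated separately, and one must verify that each is genuinely lower-dimensional and hence negligible. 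The one subtlety to get right in the main term is to resist imposing the integrality of $d$, since it is exactly the crude count of triples $(a,b,c)$, combined with the harmonic sum over $a$, that produces the single factor of $\log X$.
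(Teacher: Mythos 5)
Your proof is correct. There is, in fact, no internal proof to compare it to: the paper quotes this lemma directly from Davenport \cite[Lem.~2]{Davenport1} without reproducing the argument, so your write-up supplies a genuinely self-contained proof, and it is in the same spirit as Davenport's original lattice-point count. All the key steps check out: from $A=C$ and $|B|\le A$ you get $3A^2\le 4AC-B^2<X$, hence $0<A<\sqrt{X/3}$; the bounds $|a|,|b|=O(X^{1/4})$ are indeed available from Lemma~\ref{funddomain}, whose hypotheses ($-A<B\le A\le C$ and $0<4AC-B^2<X$) are satisfied here; for $b\ne0$ the relation $b^2-ac=c^2-bd$ determines $d$, so dropping the integrality of $d$ and counting triples $(a,b,c)$ is a legitimate overcount, and for fixed $(a,b)$ with $a\ne0$ the constraint $0<b^2-ac<\sqrt{X/3}$ confines $c$ to $O(1+\sqrt{X}/|a|)$ values, whence the harmonic sum over $a$ produces exactly the stated $O(X^{3/4}\log X)$. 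The boundary strips are also handled correctly: for $a=0$, reducedness gives $|bc|\le b^2$, i.e.\ $|c|\le|b|$; for $b=0$, $a\ne0$, the equation $A=C$ reads $c(c+a)=0$, so $c=-a$ (the option $c=0$ being excluded since $A>0$), and then $|ad|\le a^2$ gives $|d|\le|a|$; both strips contribute only $O(X^{1/2})$.
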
 


Finally, the following lemma implies that the number of reducible integer-matrix binary cubic forms with reduced quadratic covariant and bounded reduced discriminant is asymptotically the same as the number of binary cubic forms with $a = 0$, reduced quadratic covariant, and bounded reduced discriminant.

\begin{lemma}[{\bf\cite[Lem.~3]{Davenport1}}] The number of reducible integral binary cubic forms $f$ with $a \neq 0$ that satisfy $-A < B \leq A \leq C$ and for which $0 < -\Disc(Q) < X$ is $O(X^{\frac{3}{4} + \epsilon})$, for any $\epsilon > 0$.
\end{lemma}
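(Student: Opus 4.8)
The final statement to prove is Lemma~\cite[Lem.~3]{Davenport1}: the number of \emph{reducible} integer-matrix binary cubic forms $f$ with $a \neq 0$ satisfying the reduction inequalities $-A < B \leq A \leq C$ and $0 < -\disc(f) < X$ is $O(X^{3/4+\epsilon})$.

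The plan is to use the factorization structure of reducible forms over $\bZ$ together with the coefficient bounds already established in Lemma~\cite[Lem.~1]{Davenport1} (our Lemma~\ref{funddomain}). A reducible binary cubic form $f(x,y)=ax^3+3bx^2y+3cxy^2+dy^3$ has a rational (hence, after clearing denominators, an integral primitive) linear factor $px - qy$ with $\gcd(p,q)=1$. Since $a \neq 0$, the root $q/p$ is a genuine rational number with $p \neq 0$. The key idea is that specifying this linear factor, together with a bounded amount of additional data, essentially determines $f$, so one counts by summing over the possible linear factors. First I would extract from the reduction-domain bounds in Lemma~\ref{funddomain} the control we have on the coefficients: in particular $|a| \ll X^{1/4}$, $|b| \ll X^{1/4}$, $|ad| \ll X^{1/2}$, $|bc| \ll X^{1/2}$, $|ac^3| \ll X$, and $|c^2(bc-ad)| < X$. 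These are exactly the inequalities that cut the count down to the claimed exponent.

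The main steps are as follows. Since $f$ is reducible with $a\neq 0$, it factors over $\bZ$ as $f(x,y) = (px - qy)\cdot g(x,y)$ (up to content), where $g$ is a binary quadratic form and the leading behavior forces $p \mid a$. I would first bound the number of choices for the leading coefficient $a$: by $|a|\ll X^{1/4}$ there are $O(X^{1/4})$ such values, and for each the divisor $p\mid a$ ranges over $O(X^{\epsilon})$ possibilities (by the standard divisor bound $d(n)=O(n^\epsilon)$). Next, for a fixed linear factor, the remaining freedom in $f$ is governed by the quadratic cofactor $g$; here the Hessian-covariant bounds translate into bounds on the coefficients of $g$. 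The inequalities $|ac^3|\ll X$ and $|c^2(bc-ad)|<X$ are what constrain $c$ and the cross terms, so that once $a$ (and its divisor structure) is fixed, the number of compatible $(b,c,d)$ producing a reducible form with the prescribed root is $O(X^{1/2+\epsilon})$. Multiplying the $O(X^{1/4+\epsilon})$ choices for the leading data by the $O(X^{1/2+\epsilon})$ choices for the remaining coefficients yields the total bound $O(X^{3/4+\epsilon})$.

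The subtle point, and the step I expect to be the main obstacle, is making precise the claim that fixing the rational root together with the leading coefficient pins down the rest of the form up to $O(X^{1/2+\epsilon})$ choices, \emph{uniformly} and without double-counting. The difficulty is that a single root $q/p$ can occur in forms with many different cofactors $g$, and one must use the reduction inequalities (\ref{reduced2})---not just the size of a single coefficient---to cut the count correctly; the interplay between the condition that $px-qy$ divides $f$ (a linear relation among $a,b,c,d$ determined by $p,q$) and the covariant bounds $|ac^3|\ll X$, $|c^2(bc-ad)|<X$ is exactly what forces the exponent down from the naive $X$ to $X^{3/4}$. Concretely, after substituting the divisibility relation to eliminate one coefficient, one is left counting lattice points in a region whose volume the covariant bounds control to order $X^{3/4}$; the $\epsilon$ in the exponent absorbs the divisor-function factors from ranging over $p \mid a$. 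I would therefore organize the proof as a careful lattice-point count in the sliced region, treating the reducible locus as a union of $O(X^{1/4+\epsilon})$ sublattices (one per linear factor), each contributing $O(X^{1/2+\epsilon})$ points.
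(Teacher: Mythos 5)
The paper does not actually prove this lemma; it quotes it from Davenport, so the comparison target is Davenport's own argument, which runs essentially as follows. By Gauss's lemma a reducible form with $a\neq 0$ factors over $\Z$ as $f(x,y)=(px+qy)(ex^2+mxy+ny^2)$, so that $a=pe$, $d=qn$, and $3b=pm+qe$. One now counts by choosing, \emph{in this order}: the coefficient $a$ (at most $O(X^{1/4})$ values, by Lemma~\ref{funddomain}); the factorization $a=pe$ (at most $O(X^{\epsilon})$ ways, by the divisor bound); the coefficient $d$, which by the inequality $|ad|\ll X^{1/2}$ lies in a range of length $O(X^{1/2}/|a|)$ \emph{depending on} $a$; the factorization $d=qn$ (again $O(X^{\epsilon})$ ways); and finally $m$, which by $|pm+qe|=|3b|\ll X^{1/4}$ has $O(X^{1/4}/|p|+1)$ possible values. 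Since $(p,q,e,m,n)$ determines $f$, the total is $\ll X^{\epsilon}\sum_{0<|a|\ll X^{1/4}}\frac{X^{1/2}}{|a|}\cdot X^{1/4}\ll X^{3/4+\epsilon}$, the case $d=0$ (where ``$q\mid d$'' is vacuous) being counted separately: such forms are $x(ax^2+3bxy+3cy^2)$ and number $O(X^{3/4}\log X)$ by the bounds on $a,b,c$ alone.

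Your proposal has a genuine gap at exactly the step you flag as the obstacle, and the gap is structural rather than technical. You multiply $O(X^{1/4+\epsilon})$ choices of ``leading data'' $(a,\,p\mid a)$ by a claimed \emph{uniform} bound of $O(X^{1/2+\epsilon})$ for the compatible $(b,c,d)$. First, fixing $(a,p)$ does not fix the linear factor: the numerator $q$ of the root is still completely free, so the phrase ``for a fixed linear factor'' conceals an unbounded parameter. Second, the uniform bound cannot be deduced from the inequalities of Lemma~\ref{funddomain} that you propose to use: taking $a=p=1$, $|q|\asymp X^{1/7}$, $|b|\asymp X^{1/7}$, $|c|\asymp X^{2/7}$ (each divided by a suitable constant) and $d=-3qc-3bq^2-q^3$, one checks that every displayed inequality of Lemma~\ref{funddomain} is satisfied, producing on the order of $X^{4/7}$ admissible triples $(b,c,d)$ for this single pair $(a,p)$; since $4/7>1/2$, no argument using only those inequalities can cap the per-$(a,p)$ count at $X^{1/2+\epsilon}$. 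The correct count is not a product of two uniform factors: the number of admissible $(b,c,d)$ for fixed $(a,p)$ must be allowed to grow as $|a|$ shrinks, roughly like $X^{3/4+\epsilon}/|a|$, and $X^{3/4+\epsilon}$ emerges only because $\sum_{|a|\ll X^{1/4}}1/|a|\ll\log X$. The ingredient you are missing, which makes this work, is the second application of the rational root theorem: $q$ divides $d$, so after choosing $d$ inside its $a$-dependent range the pair $(q,n)$ costs only $O(X^{\epsilon})$, and then $|b|\ll X^{1/4}$ pins down the one remaining coefficient $m$. Without the constraint $q\mid d$, juggling $|ac^3|\ll X$ and $|c^2(bc-ad)|<X$ does not close the count.
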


Let $h(D)$ denote the number of $\SL_2(\bZ)$-classes of integer-matrix binary cubic forms of reduced discriminant~$D$, and define~$h'(D)$ to be the number of $\SL_2(\bZ)$-classes of integer-matrix binary cubic forms of reduced discriminant~$D$ having a representative with $a = 0$ and quadratic covariant that satisfies $-A < B \leq A \leq C$. Then by the previous two lemmas, we see that 
	\begin{equation}\label{reduction}
	 \sum_{0<-D<X} h(D) = \sum_{0<-D<X} h'(D) + O(X^{\frac{3}{4} + \epsilon}).
	\end{equation}
Thus, we focus our attention on computing $\sum_{0<-D<X} h'(D).$

\subsubsection{The number of binary cubic forms of bounded reduced discriminant with $a = 0$, $b>0$, and reduced quadratic covariant}

If $f(x,y) = 3bx^2y + 3cxy^2 + dy^3$, then the coefficients of the quadratic covariant of $f$ are given by
  $$A = b^2, \quad B = -bc, \quad \mbox{and} \quad C = c^2 - bd,$$
and furthermore $\disc(f) = \Disc(Q) = -3b^2c^2 + 4b^3d$.  We are interested in the number of $\SL_2(\bZ)$-equivalence classes of integer-matrix binary cubic forms with $a = 0$ such that 
  \begin{equation}\label{reduced3}
   -A < B \leq A \leq C \quad \mbox{and} \quad 0 < -\Disc(Q) < X. 
  \end{equation} 
Note that in order for $\Disc(Q)$ to be nonzero, we must have $b \neq 0$. Furthermore, the $\SL_2(\bZ)$-element $-\Id_2$ acts on a form $f(x,y)$ by negating its coefficients, and thus we can assume that our choice of representative for a given $\SL_2(\bZ)$-equivalence class has both $a = 0$ and nonnegative $b$. Apart from the cases when $A = B = C$ or $A = C$ and $B = 0$, the restrictions $a = 0$ and $b > 0$ describe a unique representative in each $\SL_2(\bZ)$-equivalence class of forms satisfying (\ref{reduced3}) and $a = 0$. If $A = B = C$, then the binary cubic form is of the form $3bx^2y - 3bxy^2$. Similarly, if $A = C$ and $B = 0$, then a binary cubic form with such a quadratic covariant is of the form $3bx^2y - by^3$. Thus, by Lemma~\ref{funddomain}, there are $O(X^{1/4})$ such forms in the region described by (\ref{reduced2}) with $a = 0$. If we define $h_1'(D)$ to be the number of integer-matrix binary cubic forms of reduced discriminant $D$ with $a = 0$, $b > 0$, and whose quadratic covariant satisfies $-A < B \leq A \leq C$, then by (\ref{reduction}) we also have
  \begin{equation}\label{reduction2}  
  \sum_{0<-D<X} h(D) = \sum_{0<-D<X} h_1'(D) + O(X^{\frac{3}{4} + \epsilon}).
  \end{equation}

To compute $\sum_{0<-D<X} h_1'(D)$, note that the inequalities in (\ref{reduced3}) imply that $-b^2 < bc < b^2 \leq c^2 - 3bd$ and $0 < 3b^2c^2 - 4b^3d <X$ when $a = 0$; hence if $b > 0$, then
  $$-b < c \leq b \quad \mbox{and} \quad d < \frac{3}{4}\cdot b.$$
Also, since $B^2 \leq AC$, we have $bd \leq 0$, so $d \leq 0$. Using the upper bound on the reduced discriminant of $f$ and the inequality $A \leq C$ from (\ref{reduced3}), we conclude that
  $$\frac{3c^2}{4b} - \frac{X}{4b^3} < d \leq \frac{c^2}{b} - b .$$
The number of integer-matrix binary cubic forms with $a = 0$ and $b > 0$ satisfying (\ref{reduced3}) is therefore
\begin{eqnarray*}
\sum_{0<-D<X} h_1'(D) &=&\sum_{0<b<\frac{\sqrt{2}}{\sqrt[4]{3}}X^{1/4}}\, \sum_{-b < c\phantom{\frac{\sqrt{2}}{\sqrt[4]{3}}}\hspace{-.18in}\leq b}\#\{d \in \bZ : \frac{3c^2}{4b} - \frac{X}{4b^3} < d \leq \frac{c^2}{b} - b\} \\
&=&\sum_{0<b<\frac{\sqrt{2}}{\sqrt[4]{3}}X^{1/4}}\,\sum_{-b < c\phantom{\frac{\sqrt{2}}{\sqrt[4]{3}}}\hspace{-.18in}\leq b} \left(\left(\frac{c^2}{b} -  b\right) - \left(\frac{3c^2}{4b}-\frac{X}{4b^3}\right) + O(1)\right) \\
			&=&\sum_{0<b<\frac{\sqrt{2}}{\sqrt[4]{3}}X^{1/4}} \left(2b\cdot\frac{X}{4b^3} + O(b^2)\right) \\
			&=& \frac{\zeta(2)}{2}X + O(X^{3/4})
\end{eqnarray*}
Thus, by (\ref{reduction2}) the number of $\SL_2(\bZ)$-equivalence classes of reducible integer-matrix binary cubic forms having bounded negative reduced discriminant is given by
  $$\sum_{0<-D<X} h(D) = \frac{\zeta(2)}{2}\cdot X + O(X^{\frac34+\epsilon}).$$

\pagebreak
\subsubsection{Restriction to projective forms}

We now complete the proof of Proposition \ref{redprop} in the case of negative reduced
discriminant by further restricting to projective forms. Let
$h_{1,\proj}'(D)$ be the number of projective integer-matrix binary cubic
forms of reduced discriminant $D$ with $a = 0$, $b > 0$, and reduced
quadratic covariant. By~(\ref{projbcf}), we know that such a form is
projective if and only if $(b^2,bc,c^2-bd) = 1$, or equivalently if
and only if $(b,c) = 1$. Thus $h_{1,\proj}'(D)$ counts those integer-matrix binary cubic forms having reduced
discriminant~$D$, $a = 0$, $b > 0$, $(b,c) = 1$, and reduced
quadratic covariant. Define $h_{1,n}'(D)$ to be the number of integer-matrix binary
cubic forms of reduced discriminant~$D$ with $a = 0$, $b > 0$, $n \mid (b,c)$,
and reduced quadratic covariant. Note that $h_{1,1}'(D) = h_{1,\proj}'(D).$ We
compute $\sum_{0<-D<X} h_{1,\proj}'(D)$ by using the
inclusion-exclusion principle:
	$$\sum_{0<-D<X} h_{1,\proj}'(D) = \sum_{0<-D<X} \sum_{n = 1}^\infty \mu(n)h_{1,n}'(D) = \sum_{n=1}^\infty \mu(n)\cdot\left( \sum_{0<-D<X} h_{1,n}'(D)\right),$$
where $\mu(\cdot)$ denotes the M\"obius function.
	
Fix $n \in \bZ$, and let $3bx^2y + 3cxy^2 + dy^3$ have reduced discriminant $D = -3b^2c^2 + 4b^3d$, $\,b > 0$, and $n \mid (b,c)$. Let $b = n\cdot b_1$ and $c = n \cdot c_1$. Assume that $A = b^2$, $B = -bc$, $C = c^2-bd$ satisfy (\ref{reduced3}). Then
	$$-b_1 < c \leq b_1 \quad \mbox{ and } \quad d < \frac{3}{4}nb_1.$$  Furthermore, $d \leq 0$ and $d$ satisfies
	$$\frac{3nc_1^2}{4b_1} - \frac{X}{4n^3b_1^3} < d \leq \frac{nc_1^2}{b_1} - nb_1.$$
Therefore, the number of integer-matrix binary cubic forms with $a = 0$, $b > 0$, and $n \mid (b,c)$ satisfying~(\ref{reduced3}) is:
\begin{eqnarray*}
	\sum_{0<-D<X} h_{1,n}'(D) 
			&=& \sum_{0<b_1<\frac{\sqrt{2}}{\sqrt[4]{3}n}X^{1/4}}\, \sum_{-b_1 < c_1\phantom{\frac{\sqrt{2}}{\sqrt[4]{3}}}\hspace{-.18in}\leq b_1} \#\{d : \frac{3nc_1^2}{4b_1} - \frac{X}{4n^3b_1^3} < d \leq \frac{nc_1^2}{b_1} - nb_1\} \\
			&=& \sum_{0<b_1<\frac{\sqrt{2}}{\sqrt[4]{3}n}X^{1/4}}\, \sum_{-b_1 < c_1\phantom{\frac{\sqrt{2}}{\sqrt[4]{3}}}\hspace{-.18in}\leq b_1} \left(\left(\frac{nc_1^2}{b_1} -  nb_1\right) - \left(\frac{3nc_1^2}{4b_1}-\frac{X}{4n^3b_1^3}\right) + O(1)\right) \\
			&=& \sum_{0<b_1<\frac{\sqrt{2}}{\sqrt[4]{3}n}X^{1/4}} \left(2b_1\cdot\frac{X}{4n^3b_1^3} + O(nb_1^2)\right) \\
			&=& \frac{\zeta(2)}{2n^3}X + O\left(\frac{X^{3/4}}{n^2}\right),
\end{eqnarray*}	
where the implied constants are independent of $n$.
We conclude that
	\begin{eqnarray*}
	\sum_{0<-D<X} h_{1,\proj}'(D) &=& \sum_{n=1}^\infty \mu(n) \cdot \left(\frac{\zeta(2)}{2n^3}X + O\left(\frac{X^{3/4}}{n^2}\right)\right) \\
	&=& \frac{\zeta(2)}{2\zeta(3)}\cdot X + O(X^{3/4}).
	\end{eqnarray*}.

If we now let $h_{\proj,\red}(D)$ denote the number of $\SL_2(\bZ)$-equivalence classes of projective reducible integer-matrix cubic forms of reduced discriminant $D$, then by the analogous reduction formula as in (\ref{reduction2}), we obtain
	$$\sum_{0<-D<X} h_{\proj,\red}(D) = \frac{\zeta(2)}{2\zeta(3)}\cdot X + o(X).$$

\subsection{Counting reducible forms of positive reduced discriminant}

Recall that implicit in our study of reducible binary cubic forms of
negative reduced discriminant was the fact that their quadratic covariants
were definite, and thus the fundamental domain for positive definite
quadratic forms allowed us to make a well-defined choice for a
representative for each $\SL_2(\bZ)$-class of binary cubic forms we
were counting. If $f(x,y) = ax^3 + 3bx^2y + 3cxy^2 + dy^3$ has
positive reduced discriminant, then its quadratic covariant as defined in
(\ref{quadcov}) is indefinite. However, if we can associate a
different $\SL_2(\bZ)$-covariant quadratic form that is positive
definite to each binary cubic form of positive reduced discriminant, then we can
carry out the analogous count. Again, we follow Davenport
\cite{Davenport2} and note that a binary cubic form of the form
$f(x,y) = ax^3 + 3bx^2y + 3cxy^2 + dy^3$ with positive reduced discriminant
has one real root and two complex roots. Thus, if $\alpha$ denotes the
real root, then we can write
  \begin{equation*}
  f(x,y) = (y-\alpha\cdot x)(Px^2 + Qxy + Ry^2) \quad \mbox{where} \quad
    P = 3b + 3c\alpha + d\alpha^2, \quad Q = 3c + d\alpha, \quad R = d.
    \end{equation*}
We call the binary quadratic form with coefficients $P$, $Q$, and $R$ the \emph{$($definite$)$ quadratic factor} of the binary cubic form $f$. 

\subsubsection{Reduction theory}
As in the case of reduced negative discriminant, a fundamental domain for the action of $\SL_2(\bZ)$ consists of those real quadratic forms $Px^2 + Qxy + Ry^2$ whose coefficients satisfy 
  \begin{equation}\label{reduced4}
  -P < Q \leq P < R \quad \mbox{or} \quad 0 \leq Q \leq P = R.
  \end{equation}
It is clear that any real binary cubic form having positive reduced discriminant is properly equivalent to one with quadratic factor satisfying the inequalities in (\ref{reduced4}). If there are two such binary cubic forms that are equivalent under $\SL_2(\bZ)$ and both quadratic factors satisfy (\ref{reduced4}), then the element of $\SL_2(\bZ)$ taking one cubic form to another must preserve the quadratic factor up to scaling. Thus, it must be an element of the automorphism group of the quadratic factor, hence either $\Id_2$ or $-\Id_2$ when the quadratic factor is not a scalar multiple of $x^2 + y^2$ or $x^2 + xy + y^2$. Apart from these two exceptional cases, in each such $\SL_2(\bZ)$-equivalence class there is one binary cubic form with reduced quadratic factor and $b > 0$. 
Furthermore, using the fact that the non-reduced discriminant of a binary form is the product of the pairwise differences of the roots, one can show that 
	$$\disc(f) = \frac{1}{27}(4PR - Q^2)(P + Q\alpha + R\alpha^2)^2$$
if $\alpha$, $P$, $Q$, $R$ are defined as above. We now state the analogues of Lemmas~\ref{funddomain},~27, and~28.
\pagebreak

\begin{lemma}\label{funddomain2} {\bf \cite[Lem.~1]{Davenport2}} Let $\alpha$, $P$, $Q$, $R$ be real numbers satisfying 
	\begin{equation}\label{reduced5}
	-P < Q \leq P \leq R \quad \mbox{ and } \quad 0 < \frac{1}{27}(4PR-Q^2)(P + Q\alpha + R\alpha^2)^2 <X.
	\end{equation}
If $a$, $b$, $c$, and $d$ are given by the formulas
	$$a = -P\alpha, \quad b = \frac{P-Q\alpha}{3}, \quad c = \frac{Q - R\alpha}{3}, \quad d = R,$$
then
	$$ a < \sqrt{6} X^{1/4} \quad |b| < 2\sqrt[4]{\frac{2}{9}}X^{1/4}$$
	$$|ad| < 3\sqrt{2} X^{1/2} \quad |bc| < \frac{4\sqrt{2}}{3}X^{1/2}$$
	$$|ac^3| < \frac{20}{3}X \quad |b^3d| < \frac{20}{3}X$$
	$$c^2|9bc-ad| < 432X.$$
\end{lemma}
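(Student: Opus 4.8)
The plan is to follow Davenport's reduction-theoretic strategy, treating the four bounds of size $X^{1/4}$, the two of size $X^{1/2}$, and the three of size $X$ uniformly. Everything is driven by two quantities attached to the definite quadratic factor: its covariant discriminant $T := 4PR - Q^2$ and its value $N := P + Q\alpha + R\alpha^2$ at $(1,\alpha)$, so that the hypothesis (\ref{reduced5}) reads simply $0 < TN^2 < 27X$, i.e. $\sqrt{T}\,N < 3\sqrt 3\, X^{1/2}$. Inverting the displayed formulas gives $a = -P\alpha$, $3b = P - Q\alpha$, $3c = Q - R\alpha$, $d = R$, so that each target quantity is an explicit polynomial in $P,Q,R,\alpha$.

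First I would record the consequences of the reduction inequalities $-P < Q \le P \le R$ (with $P,R>0$): namely $Q^2 \le P^2$ and $P^2 \le PR$, whence $T \ge 3P^2$ and $T \ge 3PR$ (equivalently $P \le \sqrt{T/3}$ and $PR \le T/3$). Next I would extract the lower bounds on $N$ that make the estimates go through: completing the square gives $N = R(\alpha + Q/2R)^2 + T/(4R) \ge T/(4R)$; bounding $|Q| \le P \le R$ and applying AM--GM to $R\alpha^2 + P \ge 2\sqrt{PR}\,|\alpha|$ gives $N \ge \sqrt{PR}\,|\alpha|$; and the same inputs give $N \ge P(\alpha^2 - |\alpha| + 1) \ge \tfrac34 P\max(1,\alpha^2)$, using $\tfrac14(|\alpha|-2)^2 \ge 0$. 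These are the only facts about the fundamental domain that I will need.

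With these in hand, each bound reduces to a one-line estimate followed by substitution of $TN^2 < 27X$. For the $X^{1/4}$-bounds, $|a|^2 = P\cdot(P\alpha^2) \le \sqrt{T/3}\cdot\tfrac43 N = \tfrac{4}{3\sqrt3}\sqrt{T}\,N$, and $9b^2 = (P-Q\alpha)^2 \le 2P^2(1+\alpha^2) \le \tfrac{16}{3}PN$. For the $X^{1/2}$-bounds, $|ad| = PR|\alpha| \le \sqrt{PR}\cdot N \le \sqrt{T/3}\,N$, while the identities $9bc - ad = Q(N - 2Q\alpha)$ and $9bc = QN - \tfrac{\alpha}{4}(T + 9Q^2)$ yield $|bc|$ after bounding $T|\alpha| \le 4PR|\alpha| = 4|ad|$. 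The degree-$4$ quantities $|ac^3|$, $|b^3d|$, and $c^2|9bc - ad|$ are the ones that genuinely need $N \ge T/(4R)$: for instance $|b^3d| \le (\tfrac{16}{27}PN)^{3/2}R$, and $P^{3/2}R \le C\,TN^{1/2}$ follows from $(PR)^{3/2} \le (T/3)^{3/2}$ together with $T^{1/2}/(2\sqrt R) \le N^{1/2}$, giving an $O(X)$ bound; the remaining two are handled the same way after writing $c = (Q - R\alpha)/3$ and invoking the identity for $9bc - ad$ above.

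The structural content---that these nine quantities are $O(X^{1/4})$, $O(X^{1/2})$, $O(X)$ respectively---is thus routine once the correct lower bound on $N$ is selected in each regime (small versus large $|\alpha|$, and small versus large $R$). The main obstacle is purely quantitative: producing the \emph{named} constants $\sqrt6$, $2\sqrt[4]{2/9}$, $3\sqrt2$, $\tfrac{4\sqrt2}{3}$, $\tfrac{20}{3}$, and $432$ rather than the weaker ones the crude estimates above supply. This requires using the sharper of the competing lower bounds on $N$ on each range of $\alpha$ and a little boundary casework at $P = R$ (and for the forms $x^2+y^2$ and $x^2+xy+y^2$ with extra automorphisms); this optimization is precisely the content of \cite[Lem.~1]{Davenport2}, which I would cite for the final constants.
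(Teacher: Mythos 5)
Your proposal is sound where it commits to details, but the comparison with the paper is lopsided in an unusual direction: the paper gives \emph{no proof at all} of this lemma. It is quoted verbatim, constants included, from Davenport's original paper, exactly as the bracketed citation \cite[Lem.~1]{Davenport2} indicates (the same is true of its negative-discriminant counterpart, Lemma~\ref{funddomain}). So your partial derivation already does strictly more than the text it is being compared against. Your intermediate inequalities check out: writing $T = 4PR-Q^2$ and $N = P+Q\alpha+R\alpha^2$, the reduction conditions do give $T \ge 3P^2$, $T \ge 3PR$, $N \ge T/(4R)$, $N \ge \sqrt{PR}\,|\alpha|$, and $N \ge \tfrac34 P\max(1,\alpha^2)$, and the identities $9bc-ad = Q(N-2Q\alpha)$ and $9bc = QN - \tfrac{\alpha}{4}(T+9Q^2)$ are easily verified; chasing these through as you indicate yields, for instance, $|a| < 2X^{1/4}$, $|b| < \tfrac43 X^{1/4}$, $|ad| < 3X^{1/2}$, $|bc| < \tfrac53 X^{1/2}$, and $|b^3d| < \tfrac{128}{27}X$, each of which is at least as sharp as the constant stated in the lemma. (For $|ac^3|$ you additionally want $N \ge \tfrac34 R\alpha^2$, proved the same way as your bound $N\ge \tfrac34 P\alpha^2$, to handle the regime $R|\alpha| > |Q|$; your cruder bound $N \ge T/(4R)$ alone overshoots the constant there.) Two small corrections: the boundary casework you invoke at the end concerning $P=R$ and the forms $x^2+y^2$ and $x^2+xy+y^2$ with extra automorphisms is irrelevant to this statement, which is a pure inequality about real numbers $\alpha, P, Q, R$ --- automorphisms matter only for uniqueness of reduced representatives, which is the content of a separate lemma; and your deferral to \cite[Lem.~1]{Davenport2} for the exact named constants is harmless, both because the paper defers the entire lemma to that source and because nothing downstream uses the specific values --- they enter only through the summation range for $b$, where they affect only the $O(X^{3/4})$ error terms and not the main term $\tfrac{\zeta(2)}{2}X$.
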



\begin{lemma}[{\bf\cite[Lem.~2]{Davenport2}}] The number of integral binary cubic forms $f$ satisfying
	$$-P < Q \leq P \leq R \quad \mbox{and} \quad 0 < \disc(f) < X$$
such that $P = R$ is $O(X^{\frac{3}{4}}\log X)$.
\end{lemma}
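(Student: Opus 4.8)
The plan is to translate the boundary condition $P=R$, which a priori involves the irrational real root $\alpha$ of $f$, into a single polynomial condition on the integer coefficients $a,b,c,d$, and then to count lattice points using the coefficient bounds of Lemma~\ref{funddomain2}. From the factorization $f(x,y)=(y-\alpha x)(Px^2+Qxy+Ry^2)$ we have $a=-\alpha P$ and $d=R$, so (since $P>0$ for a definite quadratic factor) the condition $P=R$ forces $a\neq 0$, $d\neq 0$, and $\alpha=-a/d\in\Q$. In particular every form counted is reducible, which is the conceptual reason the stratum is negligible. Substituting the rational value $\alpha=-a/d$ into the root equation $d\alpha^3+3c\alpha^2+3b\alpha+a=0$ and clearing denominators yields the clean condition
\[
a^2-3ac+3bd-d^2=0,
\]
and conversely, for $a,d\neq 0$ this condition forces the (necessarily unique, since $\disc(f)>0$) rational root to equal $-a/d$, and hence $P=R$. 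The degenerate case $\alpha=0$, i.e.\ $a=0$, must be handled separately: there $P=3b$ and $R=d$, so $P=R$ reads $d=3b$, and the resulting forms $3bx^2y+3cxy^2+3by^3$ are parametrized by $(b,c)$ alone.

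With the condition in this form, I would count as follows. By Lemma~\ref{funddomain2} every form in question satisfies $|a|\ll X^{1/4}$, $|b|\ll X^{1/4}$, and $|ac^3|\ll X$. For fixed $(a,b,c)$ with $a\neq 0$, the condition $a^2-3ac+3bd-d^2=0$ is a monic quadratic equation in $d$, so there are at most two admissible values of $d$. Hence the number of forms with $a\neq 0$ is at most
\[
2\sum_{1\le |a|\ll X^{1/4}}\ \sum_{|b|\ll X^{1/4}}\ \#\{c:|ac^3|\ll X\}\ \ll\ X^{1/4}\sum_{1\le a\ll X^{1/4}}\Bigl(\frac{X}{a}\Bigr)^{1/3}.
\]
Evaluating the harmonic-type sum via $\sum_{1\le a\ll X^{1/4}}a^{-1/3}\ll X^{1/6}$ gives a bound of $X^{1/4}\cdot X^{1/3}\cdot X^{1/6}=X^{3/4}$. (Using instead the weaker bound $|bc|\ll X^{1/2}$ to control $c$ produces the sum $X^{1/4}\sum_{b}X^{1/2}/|b|\ll X^{3/4}\log X$, which already suffices for the stated estimate.) For the degenerate stratum $a=0$, $d=3b$, the condition $\disc(f)=3b^2(4b^2-c^2)\in(0,X)$ forces $|c|<2|b|\ll X^{1/4}$, so this stratum contributes at most $\sum_{b\ll X^{1/4}}O(b)=O(X^{1/2})$ forms. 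Summing the two strata gives the total bound $O(X^{3/4})$, which a fortiori gives the claimed $O(X^{3/4}\log X)$.

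The main obstacle is the first step: correctly converting the reduction-theoretic boundary condition $P=R$ into the algebraic identity $a^2-3ac+3bd-d^2=0$ and, in particular, noticing that the apparent formula $P=-a/\alpha$ degenerates exactly when $\alpha=0$, so that the stratum $a=0$ (with $d=3b$) must be counted by hand rather than through the quadratic-in-$d$ argument. Once the condition is algebraic and the bounds of Lemma~\ref{funddomain2} are in force, the remaining lattice-point count is routine; the only care needed is to use a coefficient bound strong enough (either $|ac^3|\ll X$ or $|bc|\ll X^{1/2}$) that the resulting double sum over $a$ and $b$ stays at the level $X^{3/4}$, with at most a single logarithmic loss.
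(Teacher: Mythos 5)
Your proof is correct, and it is worth noting that the paper itself gives no argument for this statement: it is quoted verbatim as Lemma~2 of Davenport \cite{Davenport2}, so any complete proof here is ``different from the paper's'' in the sense that the paper outsources it. Your route --- observing that on the stratum $P=R$ the real root is forced to be rational (namely $\alpha=-a/d$, with $d=R=P>0$), converting this into the single integral equation $a^2-3ac+3bd-d^2=0$, and then counting lattice points under the coefficient bounds of Lemma~\ref{funddomain2}, with at most two values of $d$ for each triple $(a,b,c)$ --- is elementary and self-contained, and it actually yields the sharper bound $O(X^{3/4})$, without the logarithm in the quoted statement (the factor $X^{1/4}\cdot X^{1/3}\cdot X^{1/6}=X^{3/4}$ computation is right, as is the $O(X^{1/2})$ count of the degenerate stratum $a=0$, $d=3b$, where $0<\disc(f)=3b^2(4b^2-c^2)$ forces $|c|<2|b|\ll X^{1/4}$). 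Your remark that every form with $P=R$ is reducible is also a nice structural explanation of why this boundary stratum is negligible: it puts Davenport's Lemma~2 and Lemma~3 (reducible forms with $a\neq0$) on common footing.

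Two blemishes, neither a genuine gap. First, the assertion that ``the condition $P=R$ forces $a\neq 0$'' is false as written: the form $f=3x^2y+3y^3$ has $a=0$, $P=R=3$, $Q=0$, and $\disc(f)=12>0$, so it lies in the stratum being counted. What is true, and what you in fact use, is that $P=R$ forces $d=R=P>0$ and $\alpha=-a/d\in\Q$; the case $\alpha=0$, i.e.\ $a=0$, is precisely the stratum you then (correctly) treat by hand, so this sentence contradicts your own case division and should simply be removed. Second, the parenthetical alternative that controls $c$ via $|bc|\ll X^{1/2}$ silently assumes $b\neq 0$; when $b=0$ that inequality gives no restriction on $c$ at all, so as stated that variant is incomplete --- but your main argument via $|ac^3|\ll X$ (valid since $a\neq 0$ on that stratum) does not suffer from this, and the final bound stands.
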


\begin{lemma}[{\bf\cite[Lem.~3]{Davenport2}}] The number of reducible integral binary cubic forms $f$ with $a \neq 0$ that satisfy $-P < Q \leq P \leq R$ and for which $0 < \disc(f) < X$ is $O(X^{\frac{3}{4} + \epsilon})$, for any $\epsilon > 0$.
\end{lemma}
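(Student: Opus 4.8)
The plan is to pass from a reducible form to its unique linear--quadratic factorization and to count the factorizations directly. Since $\disc(f)>0$, the cubic $f$ has exactly one real root, and reducibility over $\bQ$ forces that root to be rational; by Gauss's lemma $f$ then factors over $\bZ$ as $f=\ell\cdot q$, where $\ell=vx-uy$ is a primitive integral linear form ($v\ge 1$, $\gcd(u,v)=1$) and $q=g_0x^2+g_1xy+g_2y^2$ is an integral binary quadratic form, necessarily definite because the two remaining roots are complex conjugates. Comparing leading coefficients gives $a=vg_0$, so the hypothesis $a\neq 0$ is equivalent to $g_0\neq 0$; after a global sign change we take $g_0\ge 1$. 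Because the real root is unique, this factorization is unique up to the sign of $q$, so counting reducible forms with $a\neq 0$ amounts to counting admissible pairs $(\ell,q)$. Relaxing the integrality of $b$ and $c$ (i.e.\ dropping the congruences $3\mid vg_1-ug_0$, etc.) only enlarges the set, so it suffices to bound the number of pairs $(\ell,q)$ with $q$ integral, definite, and reduced.

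First I would record the exact discriminant identity. From the multiplicativity $\Disc(\ell q)=\Disc(q)\,\mathrm{Res}(\ell,q)^2$ of the discriminant of a linear times a quadratic, together with $\mathrm{Res}(vx-uy,\,q)=q(u,v)$, one obtains
\[
\disc(f)=\tfrac1{27}\bigl(4g_0g_2-g_1^2\bigr)\,q(u,v)^2.
\]
Writing $D_q:=4g_0g_2-g_1^2>0$, the constraint $0<\disc(f)<X$ becomes $D_q\,q(u,v)^2<27X$. Next I would transfer the reduction condition: the definite quadratic factor $(P,Q,R)$ of Lemma~\ref{funddomain2} is a nonzero scalar multiple of $(g_0,g_1,g_2)$, so the inequalities $-P<Q\le P\le R$ are scale invariant and coincide with the reducedness $-g_0<g_1\le g_0\le g_2$ of the integral factor $q$. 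Thus the quantity to estimate is
\[
\#\bigl\{(u,v,q)\;:\;v\ge 1,\ q\text{ reduced definite with }g_0\ge 1,\ D_q\,q(u,v)^2<27X\bigr\}.
\]

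The heart of the argument is a lattice-point estimate. For fixed reduced $q$, the admissible $(u,v)$ lie in the ellipse $q(u,v)<(27X/D_q)^{1/2}$, whose integer points number $\frac{2\pi}{\sqrt{D_q}}(27X/D_q)^{1/2}$ plus a boundary error controlled by the shortest vector of $q$, whose squared length is $\asymp g_0$. I would then sum over reduced $q$, parametrized by $g_0\ge 1$, $|g_1|\le g_0$, $g_2\ge g_0$, using $D_q\asymp g_0g_2$. A feasibility check---there must exist $(u,v)$ with $v\ge 1$ on which $q$ is small enough, and the minimum of $q(u,1)$ is $\asymp D_q/g_0$---forces the windows $g_2\lesssim X^{1/3}g_0^{-1/3}$ and $g_0\lesssim X^{1/4}$. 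Against these windows, the main term, the boundary term, and the ``$+1$'' term each sum to $O(X^{3/4+\epsilon})$.

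The \emph{main obstacle} is exactly this final summation, which must be carried out with sharp ranges. A crude estimate that decouples the count of quadratic factors from the ellipse count---bounding the class number $h(-D)$ of binary quadratic forms by $h(-D)\ll D^{1/2+\epsilon}$ and summing over all $D=D_q\le \sqrt X$---overshoots, the boundary term alone then contributing $X^{7/8}$; one therefore cannot separate the two counts. The correct bound needs the finer window $g_2\lesssim X^{1/3}g_0^{-1/3}$ coming from the feasibility constraint, after which the three contributions collapse to $X^{3/4}$ up to a logarithmic factor. The remaining bookkeeping---factors $q$ of content $>1$, the two exceptional reduced forms $x^2+y^2$ and $x^2+xy+y^2$ carrying extra automorphisms, and the boundary cases $g_0=g_2$---contributes only lower order and is absorbed into $O(X^{3/4+\epsilon})$.
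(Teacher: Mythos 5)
Your proposal is correct, but there is nothing in the paper to compare it against: the paper does not prove this lemma at all, it imports it verbatim from Davenport's 1951 paper (the citation [Davenport2, Lem.~3]), restated with the reduced discriminant $\disc=-\frac{1}{27}\Disc$. So the relevant comparison is with Davenport's original argument, and your route is genuinely different. You factor $f=\ell q$ with $\ell$ primitive integral and $q$ integral, positive definite and reduced (the inequalities $-P<Q\leq P\leq R$ do transfer to $q$, since the paper's $(P,Q,R)$ is a positive scalar multiple of $(g_0,g_1,g_2)$ and the inequalities are scale-invariant), use $\disc(f)=\frac{1}{27}(4g_0g_2-g_1^2)\,q(u,v)^2$, and count lattice points in ellipses attached to reduced $q$. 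The decisive point, which you correctly isolate, is the feasibility window: since $q(u,v)\geq D_q/(4g_0)$ whenever $|v|\geq 1$, only $q$ with $g_2\ll X^{1/3}g_0^{-1/3}$ (hence $g_0\ll X^{1/4}$) contribute, and against this window the main term, the boundary term, and the term counting the $q$'s themselves come to $O(X^{3/4}\log X)$, $O(X^{3/4})$, and $O(X^{3/4})$ respectively --- I verified all three sums --- so you in fact obtain the marginally sharper bound $O(X^{3/4}\log X)$. Davenport's own proof is, in essence, coefficient-based rather than geometric: a rational zero $(p,q)$ of $f$ with $\gcd(p,q)=1$ forces $q\mid a$ and $p\mid d$, so for each pair $(a,d)$ permitted by his coefficient bounds (Lemma 24 here) there are $O(X^{\epsilon})$ possible linear factors by divisor estimates, and the vanishing condition is then linear in $(b,c)$; that divisor bound is exactly where his $X^{\epsilon}$ enters. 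Your approach buys a cleaner error term and makes transparent that the count is dominated by small $g_0$; his avoids lattice-point geometry entirely. Two small slips, neither affecting validity: the factorization is unique only up to $(\ell,q)\mapsto(-\ell,-q)$, so you cannot normalize $v\geq 1$ and $g_0\geq 1$ simultaneously --- fix $q$ positive definite and let $v$ carry either sign, which costs nothing since your ellipse count never used $v\geq 1$; and $a=vg_0\neq 0$ is equivalent to ``$v\neq 0$ and $g_0\neq 0$,'' not to $g_0\neq 0$ alone.
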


Define $h'(D)$ to be the number of $\SL_2(\bZ)$-classes of integer-matrix binary cubic forms having reduced discriminant $D$ with $a = 0$ and whose quadratic factor satisfies $$-P < Q \leq P \leq R.$$ Then by the previous two lemmas, we see that
	\begin{equation}\label{reduction3}
	\sum_{0<D<X} h(D) = \sum_{0<D<X} h'(D) + O(X^{\frac{3}{4}+\epsilon}).
	\end{equation}
Thus, we focus our attention on computing $\sum_{0<D<X} h'(D)$.

\subsubsection{The number of binary cubic forms of bounded reduced discriminant with $a = 0$, $b > 0$ and reduced quadratic factor}

If $f(x,y) = 3bx^2y + 3cxy^2 + dy^3$, then the coefficients of its quadratic factor are given by
	$$P = 3b, \quad Q = 3c, \quad R = d.$$
Furthermore, $\disc(f) = -\frac{1}{27}\Disc(Px^2 + Qxy + Ry^2)P^2 = -3b^2c^2 + 4b^3d$. We are interested in the number of $\SL_2(\bZ)$-equivalence classes of integer-matrix binary cubic forms $f$ with $a = 0$ such that 
	$$-P < Q \leq P \leq R \quad \mbox{and} \quad 0 < \disc(f) < X.$$
Note that in order for the discriminant of $f$ to be nonzero, we must have $b \neq 0$. Thus, we can assume that our choice of representative for a given $\SL_2(\bZ)$-equivalence class has both $a = 0$ and positive $b$. Apart from the cases when $P = Q = R$ or $P = R$ and $Q = 0$, the restrictions $a = 0$ and $b > 0$ describe a unique representative in each $\SL_2(\bZ)$-equivalence class of forms satisfying (\ref{reduced4}) and~$a = 0$. If $P =  Q = R$, then the binary cubic form  is of the form $3bx^2y + 3bxy^2 + 3by^3$. Similarly, if $P = R$ and $Q = 0$, then the binary cubic form is of the form $3bx^2y + 3by^3$. Thus, by Lemma~\ref{funddomain2}, there are $O(X^{1/4})$ such forms in the region described by (\ref{reduced5}) with $a = 0$. If we define $h_1'(D)$ to be the number of integer-matrix binary cubic forms having reduced discriminant $D$ with $a = 0$ and $b > 0$ and whose quadratic factor satisfies $-P < Q \leq P \leq R$, then by (\ref{reduction3}) we have
	\begin{equation}\label{reduction4}
	\sum_{0<D<X} h(D) = \sum_{0<D<X} h_1'(D) + O(X^{\frac{3}{4} + \epsilon}).
	\end{equation}
	To compute $\sum_{0<D<X} h_1'(D)$, note that the inequalities in (\ref{reduced5}) imply that $-3b < 3c \leq 3b \leq d$ and $0 < -3b^2c^2 + 4b^3d < X$ when $a = 0$; hence if $b > 0$, then
		$$-b < c \leq b \quad \mbox{and} \quad 3b < d.$$
Thus $d > 0$. Using the upper bound on the reduced discriminant of $f$, we conclude that
	 $$3b < d < \frac{X}{4b^3} + \frac{3c^2}{4b}.$$
Therefore, the number of integer-matrix binary cubic forms with $a = 0$ and $b > 0$ satisfying (\ref{reduced5}) is
\begin{eqnarray*}
\sum_{0<D<X} h_1'(D) &=& \sum_{0<b<\sqrt[4]{\frac{32}{9}}X^{1/4}}\, \sum_{-b < c\phantom{\frac{\sqrt{2}}{\sqrt[4]{3}}}\hspace{-.18in}\leq b} \#\{d : 3b < d < \frac{X}{4b^3}  + \frac{3c^2}{4b}\} \\
&=& \sum_{0<b<\sqrt[4]{\frac{32}{9}}X^{1/4}}\, \sum_{-b < c\phantom{\frac{\sqrt{2}}{\sqrt[4]{3}}}\hspace{-.18in}\leq b} \left(\frac{X}{4b^3}  + \frac{3c^2}{4b} - 3b + O(1)\right) \\
			&=& \sum_{0<b<\sqrt[4]{\frac{32}{9}}X^{1/4}} \left(2b\cdot\frac{X}{4b^3} + O(b^2)\right) \\
			&=& \frac{\zeta(2)}{2}X + O(X^{3/4}).
\end{eqnarray*}
Hence, by (\ref{reduction4}), the number of $\SL_2(\bZ)$-equivalence classes of reducible integer-matrix binary cubic forms of bounded positive reduced discriminant is given by
	$$\sum_{0<D<X} h(D) = \frac{\zeta(2)}{2}\cdot X + O(X^{3/4 + \epsilon}).$$

\subsubsection{Restriction to projective forms}

We have seen that the number of $\SL_2(\bZ)$-equivalence classes of 
reducible integer-matrix binary cubic forms with positive reduced discriminant
less than $X$ is $\frac{\zeta(2)}{2}X+o(X)$. We complete the proof
of Proposition~\ref{redprop} by further restricting to projective
forms. Let $h_{1,\proj}'(D)$ be the number  of projective integer-matrix
binary cubic forms having reduced discriminant $D$, $a = 0$, $b > 0$, and
reduced definite quadratic factor. By~(\ref{projbcf}), we know that
such a form is projective if and only if $(b^2,bc,c^2-bd) = 1$, or
equivalently if and only if $(b,c) = 1$. Thus, $h_{1,\proj}'(D)$ counts those integer-matrix forms having reduced discriminant $D$,  $a = 0$, $b > 0$, $(b,c) = 1$, and reduced quadratic factor. Define $h_{1,n}'(D)$ to be the number of $\SL_2(\Z)$-classes of integer-matrix binary cubic forms having reduced discriminant $D$, $a = 0$, $b > 0$, $n \mid (b,c)$, and reduced quadratic factor. Then we have $h_{1,1}'(D) = h_{1,\proj}'(D)$. As before, we compute $\sum_{0<D<X} h_{1,\proj}'(D)$ by using the inclusion-exclusion principle:
	$$\sum_{0<D<X} h_{1,\proj}'(D) = \sum_{0<D<X} \sum_{n = 1}^\infty \mu(n)h_{1,n}'(D) = \sum_{n=1}^\infty \mu(n)\cdot\left( \sum_{0<D<X} h_{1,n}'(D)\right).$$
	
Fix $n \in \bZ$, and let $3bx^2y + 3cxy^2 + dy^3$ have reduced discriminant $D = -3b^2c^2 + 4b^3d$, $b > 0$, and $n \mid (b,c)$. Let $b = n\cdot b_1$ and $c = n \cdot c_1$. Assume that $P$, $Q$, $R$ satisfy (\ref{reduced4});  then
	$$-b_1 < c_1 \leq b_1 \quad \mbox{ and } \quad 3nb_1 < d.$$  Furthermore, $d > 0$ and $d$ satisfies
	$$3nb_1 < d < \frac{X}{4n^3b_1^3} - \frac{3nc_1^2}{4b_1}.$$
Therefore, the number of integer-matrix binary cubic forms with $a = 0$, $b > 0$, and $n \mid (b,c)$ satisfying (\ref{reduced4}) is:
\begin{eqnarray*}
	\sum_{0<D<X} h_{1,n}'(D) &=&\sum_{0<b_1<\sqrt[4]{\frac{32}{9n}}X^{1/4}}\, \sum_{-b_1 < c_1\phantom{\sqrt[4]{\frac{32}{9n}}X^{1/4}} \hspace{-.555in}\leq b_1} \#\{d : 3nb_1 < d < \frac{X}{4n^3b_1^3}  + \frac{3nc_1^2}{4b_1}\} \\
&=&\sum_{0<b_1<\sqrt[4]{\frac{32}{9n}}X^{1/4}}\, \sum_{-b_1 < c_1\phantom{\sqrt[4]{\frac{32}{9n}}X^{1/4}}\hspace{-.555in}\leq b_1} \left(\frac{X}{4n^3b_1^3}  + \frac{3nc_1^2}{4b_1} - 3nb_1 + O(1)\right) \\
			&=&\sum_{0<b_1<\sqrt[4]{\frac{32}{9n}}X^{1/4}} \left(2b_1\cdot\frac{X}{4n^3b_1^3} + O(nb_1^2)\right) \\
			&=& \frac{\zeta(2)}{2n^3}X + O\left(\frac{X^{3/4}}{n^2}\right),
\end{eqnarray*}	
where the implied constants are again independent of $n$.  We conclude that
	\begin{eqnarray*}
	\sum_{0<D<X} h_{1,\proj}'(D) &=& \sum_{n=1}^\infty \mu(n) \cdot \left(\frac{\zeta(2)}{2n^3}X + O\left(\frac{X^{3/4}}{n^2}\right)\right) \\
	&=& \frac{\zeta(2)}{2\zeta(3)}X + O(X^{3/4}).
	\end{eqnarray*}

If we let $h_{\proj,\red}(D)$ denote the number of $\SL_2(\bZ)$-equivalence classes of projective reducible integer-matrix binary cubic forms of reduced discriminant $D$, then by the analogous reduction formula as in (\ref{reduction2}), we obtain
	$$\sum_{0<D<X} h_{\proj,\red}(D) = \frac{\zeta(2)}{2\zeta(3)}\cdot X + o(X).$$

\section{The mean number of 3-torsion elements in class groups of quadratic orders via ring class field theory
(Proofs of Theorems~\ref{thmorders} and~\ref{gensigmaord})}

In the previous sections, we have proven
Theorems~\ref{theoremdh},~\ref{thmorders} and~\ref{sigmaid} without
appealing to class field theory. To prove Theorem~\ref{gensigmaord}
and Corollary~\ref{maxcase}, we use a generalization of the
class-field-theory argument originally due to Davenport and
Heilbronn. In particular, we show that the elements of $\Cl_3(\cO)$
for a quadratic order $\O$ can be enumerated via certain non-Galois cubic
fields.  This  involves the theory of ring class fields (see \cite[\S9]{Cox}), together with the theorem of Davenport and Heilbronn on the density
of discriminants of cubic fields:

\begin{theorem}[\cite{DH}]\label{cubics} Let $N_3(\xi,\eta)$ denote the number of cubic fields $K$ up to isomorphism that satisfy $\xi < \Disc(K) < \eta$. Then 
 $$N_3(0,X) = \frac{1}{12\zeta(3)}X + o(X) \quad \mbox{and} \quad N_3(-X,0) = \frac{1}{4\zeta(3)}X + o(X).$$
\end{theorem}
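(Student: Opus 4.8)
The plan is to prove Theorem~\ref{cubics} by the classical route of Davenport and Heilbronn: translate the count of cubic fields into a count of integral binary cubic forms via the Delone--Faddeev correspondence~\cite{DF}, and then sieve down to maximal orders. Recall that Delone--Faddeev provides a discriminant-preserving bijection between $\GL_2(\bZ)$-orbits of irreducible integral binary cubic forms in $V_\bZ$ (with the usual linear action and the discriminant $\Disc$) and isomorphism classes of orders in cubic fields. Since a cubic field $K$ is determined up to isomorphism by its ring of integers $\O_K$, with $\Disc(K)=\Disc(\O_K)$, counting cubic fields with $\xi<\Disc<\eta$ is exactly counting \emph{maximal} cubic orders in that range, i.e.\ counting those irreducible $\GL_2(\bZ)$-orbits whose associated cubic ring is maximal. (The cyclic cubic fields, whose forms carry extra automorphisms, have square discriminant and number only $O(\sqrt{X})$, so they do not disturb the leading term.)

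First I would count \emph{all} irreducible orbits of bounded discriminant using the $V_\bZ$/$\GL_2(\bZ)$/$\Disc$ version of Theorem~\ref{thmdensity} recorded in the remark following it: the number of orbits with $0<\Disc<X$ is $\frac{\pi^2}{72}X+o(X)=\frac{\zeta(2)}{12}X+o(X)$, and the number with $-X<\Disc<0$ is $\frac{\pi^2}{24}X+o(X)=\frac{\zeta(2)}{4}X+o(X)$, the ratio $1:3$ reflecting $n_0=6$ versus $n_1=2$. Next I would express maximality as an intersection of local conditions: a cubic ring is maximal if and only if it is maximal at every prime $p$, and the set $U_p$ of forms maximal at $p$ is cut out by congruences modulo $p^2$. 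The arithmetic heart of the argument is the local density computation showing that $\mu_p(U_p)=\left(1-\frac{1}{p^2}\right)\left(1-\frac{1}{p^3}\right)$, so that $\prod_p\mu_p(U_p)=\frac{1}{\zeta(2)\zeta(3)}$.

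With these local densities in hand, I would apply Theorem~\ref{thmdensity} to $\bigcap_{p<Y}U_p$ for each finite $Y$ (a finite set of congruence conditions) and then let $Y\to\infty$, precisely as in the limit argument carried out in \S\ref{thm1pf}. Multiplying the all-orders leading constants by $\prod_p\mu_p(U_p)=\frac{1}{\zeta(2)\zeta(3)}$ yields $\frac{\zeta(2)}{12}\cdot\frac{1}{\zeta(2)\zeta(3)}=\frac{1}{12\zeta(3)}$ in the totally real (positive discriminant) case and $\frac{\zeta(2)}{4}\cdot\frac{1}{\zeta(2)\zeta(3)}=\frac{1}{4\zeta(3)}$ in the complex (negative discriminant) case, which are exactly the claimed densities for $N_3(0,X)$ and $N_3(-X,0)$.

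The main obstacle is twofold. First, one must justify interchanging the limit in $X$ with the limit in $Y$, that is, control the forms that are non-maximal at some large prime $p\geq Y$; this needs a uniformity estimate of the same shape as Proposition~\ref{unifest}, bounding the number of orbits of bounded discriminant that are non-maximal at $p$ by $O(X/p^2)$ with an implied constant independent of $p$, so that the tail $\sum_{p\geq Y}p^{-2}$ is negligible and the $\limsup$ and $\liminf$ agree. Second, and more essentially, one must perform the local computation $\mu_p(U_p)=\left(1-\frac{1}{p^2}\right)\left(1-\frac{1}{p^3}\right)$ by classifying the binary cubic forms over $\bZ_p$ whose associated cubic ring is non-maximal; this is the genuinely arithmetic input, and it is where the factor $\zeta(3)^{-1}$, absent from the purely quadratic counts of the preceding sections, enters.
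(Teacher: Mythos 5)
Your proposal is correct, and it is essentially the proof of record for this statement: the paper does not reprove Theorem~\ref{cubics} but imports it from \cite{DH}, and what you have reconstructed is precisely the original Davenport--Heilbronn argument --- the Delone--Faddeev correspondence, Davenport's orbit counts $\frac{\pi^2}{72}X$ and $\frac{\pi^2}{24}X$ for positive and negative discriminant, the local maximality densities $\bigl(1-p^{-2}\bigr)\bigl(1-p^{-3}\bigr)$ whose product gives the factor $\frac{1}{\zeta(2)\zeta(3)}$, and an $O(X/p^2)$ uniformity estimate (the cubic analogue of Proposition~\ref{unifest}) to justify interchanging the limits in $X$ and $Y$. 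Your sieve-and-limit structure also matches the template the paper itself uses in \S\ref{thm1pf} for its class-field-theory-free proof of Theorem~\ref{theoremdh}, so no gaps remain.
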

Note that using class field theory, Davenport and Heilbronn were able to conclude Theorem~\ref{theoremdh} using this count. The new contribution of this section is to extend their argument to all orders and acceptable sets of orders (cf.\ Theorems~\ref{thmorders} and~\ref{gensigmaord}). 

\subsection{Ring class fields associated to quadratic orders} 
For a fixed quadratic order $\cO$, denote the field $\cO \otimes \bQ$ by $k$, and let $\cO_k$ denote the maximal order in $k$. If $[\cO_k:\cO] = f$, then we say that the \emph{conductor} of $\cO$ is equal to $f$ (or sometimes, the ideal generated by $f$ in $\cO_k$).

We begin with a well-known description of $\Cl(\O)$ in terms of ideal classes of~$\O_k$:

\begin{lemma}[{\bf \cite[Prop.\ 7.22]{Cox}}]\label{coxlemma} Let $I_{k}(f)$ denote the subgroup of the group of invertible ideals of~$\cO_k$ consisting of ideals that are prime to $f$, and let $P_{k,\bZ}(f)$ denote the subgroup of the group of principal ideals of $\cO_k$ consisting of those $(\alpha)$ such that $\alpha \equiv a \pmod{f\cO_k}$ for some integer $a$ that is coprime to $f$. Then
	$$\Cl(\cO) \cong I_k(f)/P_{k,\bZ}(f).$$
\end{lemma}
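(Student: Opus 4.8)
Since this statement is classical (it is \cite[Prop.\ 7.22]{Cox}), the plan is to follow the standard theory of the class group of a non-maximal order, reducing the computation of $\Cl(\cO)$ to ideals prime to the conductor $f$, where extension and contraction of ideals furnish a clean dictionary between $\cO$ and $\cO_k$. Throughout, I write $I(\cO,f)$ for the group of invertible fractional $\cO$-ideals prime to $f$, and $P(\cO,f)$ for its subgroup of principal ideals $\alpha\cO$, $\alpha\in k^\times$, that are prime to $f$.

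First I would establish the extension--contraction isomorphism $\phi\colon I(\cO,f)\xrightarrow{\sim} I_k(f)$ given by $\mathfrak{a}\mapsto\mathfrak{a}\cO_k$, with inverse $\mathfrak{A}\mapsto\mathfrak{A}\cap\cO$. The key input is that $\cO=\bZ+f\cO_k$ forces $\cO\otimes\bZ_p=\cO_k\otimes\bZ_p$ for every prime $p\nmid f$, while an ideal prime to $f$ is the full local ring at every $p\mid f$; hence such ideals are supported precisely where $\cO$ and $\cO_k$ coincide, which makes $\phi$ a multiplicative bijection. Next I would show that every class of $\Cl(\cO)$ has a representative lying in $I(\cO,f)$: any invertible fractional $\cO$-ideal can be scaled by a suitable element of $k^\times$ to become integral and coprime to $f$, by a routine avoidance argument since only finitely many primes divide $f$. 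This yields $\Cl(\cO)\cong I(\cO,f)/P(\cO,f)$, and transporting through $\phi$ reduces the lemma to the single identity $\phi(P(\cO,f))=P_{k,\bZ}(f)$.

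The crux is therefore this last identification. For the inclusion $\phi(P(\cO,f))\subseteq P_{k,\bZ}(f)$: an element $\alpha\in\cO$ prime to $f$ lies in $\bZ+f\cO_k$, so $\alpha\equiv a\pmod{f\cO_k}$ for some $a\in\bZ$, and primality to $f$ forces $\gcd(a,f)=1$; thus $\alpha\cO_k\in P_{k,\bZ}(f)$, and the general fractional case follows since $\phi$ is a homomorphism. Conversely, if $(\alpha)\in P_{k,\bZ}(f)$ has a generator with $\alpha\equiv a\pmod{f\cO_k}$ and $\gcd(a,f)=1$, then $\alpha=a+f\beta\in\bZ+f\cO_k=\cO$ is prime to $f$, whence $\alpha\cO\in P(\cO,f)$ maps to $\alpha\cO_k$; the choice of generator is immaterial because its image is trivial in the quotient regardless. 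I expect the main obstacle to be the careful verification that $\phi$ is well defined, multiplicative, and bijective on ideals prime to $f$, since this is exactly where the invertibility of ideals prime to the conductor and the local agreement of $\cO$ with $\cO_k$ must be combined; the congruence bookkeeping in the final identification is, by contrast, straightforward once the dictionary is in place. As all of this is standard, in the paper I would simply invoke \cite[\S7]{Cox}.
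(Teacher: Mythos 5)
Your proposal is correct, and it matches the paper's treatment: the paper gives no proof of this lemma at all, citing it directly as \cite[Prop.~7.22]{Cox}, which is exactly what your closing sentence proposes to do. Your reconstruction (the extension--contraction dictionary $\mathfrak{a}\mapsto\mathfrak{a}\cO_k$ on ideals prime to the conductor, representatives of every class prime to $f$, and the identification of the image of the principal ideals with $P_{k,\bZ}(f)$ via $\cO=\bZ+f\cO_k$ and the norm/coprimality bookkeeping) is precisely the standard argument carried out in \cite[\S7]{Cox}, so there is nothing to add.
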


Recall that the {\it ray class group} of $k$ of conductor $f$ is defined as the quotient $\Cl_k(f) := I_k(f)/P_{k,1}(f)$ where $P_{k,1}(f)$ is the subgroup of principal ideals of $\cO_k$ consisting of those $(\alpha)$ such that $\alpha \equiv 1 \pmod  {f\cO_k}$. 
By Lemma~\ref{coxlemma}, we have the following exact sequence:
	\begin{equation}\label{clses}
	1 \rightarrow P_{k,\bZ}(f)/P_{k,1}(f) \rightarrow \Cl_k(f) \rightarrow \Cl(\cO) \rightarrow 1.
	\end{equation}

Let $\sigma$ denote the nontrivial automorphism of $\Gal(k/\bQ)$. For a finite group $G$, let $G[3]$ denote its $3$-Sylow subgroup, and if $G$ is a finite $\Gal(k/\bQ)$-module, then we can decompose $G[3] = G[3]^+ \oplus G[3]^-$ where $G[3]^{\pm} := \{ g \in G: \sigma(g) = g^{\pm 1}\}$.

\begin{lemma}[{\bf \cite[Lem.~1.10]{Nakagawa}}]
If $\cO$ is an order of conductor $f$, $k$ is the quadratic field $\cO \otimes \bQ$, and $\Cl_k(f)$ is the ray class group of $k$ of conductor $f$, then $\Cl_k(f)[3]^- \cong \Cl(\cO)[3]$.
\end{lemma}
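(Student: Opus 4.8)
The plan is to run the involution $\sigma\in\Gal(k/\bQ)$ through the exact sequence~(\ref{clses}) and show that its ``error term'' $K:=P_{k,\bZ}(f)/P_{k,1}(f)$ contributes nothing to the minus part in degree $3$. First I would record that (\ref{clses}) is an exact sequence of $\Gal(k/\bQ)$-modules, since both the surjection $\Cl_k(f)\to\Cl(\cO)$ coming from Lemma~\ref{coxlemma} and the inclusion of $K$ are induced by natural, conjugation-equivariant operations on ideals. Next I would pass to $3$-Sylow subgroups: because taking the $3$-primary part of a finite abelian group is an exact functor (it splits off as a direct summand), one obtains a $\sigma$-equivariant exact sequence
\[
1 \to K[3] \to \Cl_k(f)[3] \to \Cl(\cO)[3] \to 1 .
\]
Finally, since $\sigma$ is an involution and these are $3$-groups (on which $2$ is invertible), the idempotents $\tfrac{1\pm\sigma}{2}$ split off the $\pm$-eigenparts, so $A\mapsto A[3]^-$ is again exact, yielding
\[
1 \to K[3]^- \to \Cl_k(f)[3]^- \to \Cl(\cO)[3]^- \to 1 .
\]

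The key step, and the main obstacle, is to show that $\sigma$ acts trivially on $K$, so that $K[3]^-=0$. For this I would argue that every class in $K$ is represented by a principal ideal $(a)$ with $a\in\bZ$ coprime to $f$: given $(\alpha)\in P_{k,\bZ}(f)$ with $\alpha\equiv a \pmod{f\cO_k}$, the element $\alpha/a$ is a unit at every prime dividing $f$ and satisfies $\alpha/a\equiv 1 \pmod{f\cO_k}$ there, so that $(\alpha)(a)^{-1}\in P_{k,1}(f)$ and hence $[(\alpha)]=[(a)]$ in $\Cl_k(f)$. Since $a\in\bZ$ is fixed by conjugation, $\sigma[(a)]=[(\bar a)]=[(a)]$, so $\sigma$ fixes every element of $K$. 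A $3$-group with trivial $\sigma$-action has no nontrivial minus part (the condition $\sigma(g)=g^{-1}$ becomes $g=g^{-1}$, i.e.\ $g^2=1$, forcing $g=1$), whence $K[3]^-=0$, and the second exact sequence collapses to an isomorphism $\Cl_k(f)[3]^-\cong\Cl(\cO)[3]^-$.

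It remains to identify $\Cl(\cO)[3]^-$ with the full $3$-Sylow subgroup $\Cl(\cO)[3]$. Here I would invoke the standard fact that complex conjugation acts as inversion on the class group of a quadratic order: for an invertible ideal $I$ one has $I\bar I=(N(I))$ principal, so $[\bar I]=[I]^{-1}$, i.e.\ $\sigma$ acts as $-1$ on all of $\Cl(\cO)$. Consequently $\Cl(\cO)[3]=\Cl(\cO)[3]^-$ while $\Cl(\cO)[3]^+=0$, and combining this with the isomorphism above gives $\Cl_k(f)[3]^-\cong\Cl(\cO)[3]$, as desired. I expect the only genuinely delicate point to be the description of $K$ in the first step---in particular verifying the ray-class congruence $\alpha/a\equiv 1 \pmod{f\cO_k}$ at the primes dividing $f$, which is what makes $K$ a quotient of $(\bZ/f\bZ)^\times$ carrying the trivial $\sigma$-action; everything else is the formal exactness of the $3$-Sylow and minus-part functors together with the inversion action on quadratic class groups.
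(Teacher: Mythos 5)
Your proposal is correct and follows essentially the same route as the paper's proof: both exploit the $\Gal(k/\bQ)$-equivariance of the sequence (\ref{clses}), kill $\bigl(P_{k,\bZ}(f)/P_{k,1}(f)\bigr)[3]^-$ by representing its classes by ideals $a\cO_k$ with $a\in\bZ$ (so a minus-part element satisfies $g=g^{-1}$ in a $3$-group), and kill $\Cl(\cO)[3]^+$ via the identity $I\sigma(I)=(N(I))$. The only differences are expository—you spell out the exactness of the $3$-Sylow and $\pm$-eigenspace functors via the idempotents $\frac{1\pm\sigma}{2}$, which the paper leaves implicit.
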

\begin{proof}
It is clear that the exact sequence in (\ref{clses}) is a sequence of
finite $\Gal(k/\bQ)$-modules, and implies the exactness of the
following sequences: 
	$$1 \rightarrow \left(P_{k,\bZ}(f)/P_{k,1}(f)\right)[3]^+ \rightarrow \Cl_k(f)[3]^+ \rightarrow \Cl(\cO)[3]^+ \rightarrow 1,$$
	$$1 \rightarrow \left(P_{k,\bZ}(f)/P_{k,1}(f)\right)[3]^- \rightarrow \Cl_k(f)[3]^- \rightarrow \Cl(\cO)[3]^- \rightarrow 1.$$
We can see that $\left(P_{k,\bZ}(f)/P_{k,1}(f)\right)[3]^-$ is trivial since any element $[(\alpha)]$ such that $\alpha \equiv a \pmod{f\cO_k}$ can also be represented by $a\cO_k$. Moreover, for $[a\cO_k] \in \left(P_{k,\bZ}(f)/P_{k,1}(f)\right)[3]^-$, we must have
	$$[a\cO_k] = [\sigma(a\cO_k)] = [a\cO_k]^{-1};$$
hence $[a\cO_k]$ has order dividing~2 and~3, and so must be trivial.
Similarly, $\Cl(\cO)[3]^+$ is trivial since if $[I] \in \Cl(\cO)[3]^+$, then $[I] = [\sigma(I)]$.  Since $N(I) = \sigma(I)I \in \bZ$, $[I]$ has order dividing both~2 and~3 in $\Cl(\cO)$ and is therefore trivial. 
\end{proof}
\begin{proposition}\label{rcf}
Let $\O$ be a quadratic order.  The number of isomorphism classes of cubic fields~$K$ such that
$\Disc(\O)=c^2\Disc(K)$ for some integer $c$ is $\bigl(|\Cl_3(\O)|-1\bigr)/2$.  
\end{proposition}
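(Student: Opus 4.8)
\emph{The plan is} to realize the cubic fields in question through ring class field theory and then to count. Write $k:=\O\otimes\Q$, let $f$ be the conductor of $\O$, and let $H_\O$ denote the ring class field of $\O$, so that the Artin map gives $\Gal(H_\O/k)\cong\Cl(\O)$. The key structural input is that $H_\O/\Q$ is Galois and that the nontrivial element $\sigma\in\Gal(k/\Q)$ acts on $\Cl(\O)$ by inversion; equivalently $\Gal(H_\O/\Q)\cong\Cl(\O)\rtimes\langle\sigma\rangle$ is generalized dihedral. I would therefore set up a correspondence between isomorphism classes of the relevant cubic fields $K$ and index-$3$ subgroups of $\Cl(\O)$, the latter being straightforward to count.

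First I would analyze the cubic subextensions of $H_\O/k$. An index-$3$ subgroup $H\leq\Cl(\O)$ is automatically $\sigma$-stable, since $\sigma$ inverts $\Cl(\O)$ and $H^{-1}=H$; the fixed field $M:=H_\O^{\,H}$ is thus Galois over $\Q$ of degree $6$, and because $\sigma$ acts on $\Cl(\O)/H\cong\Z/3$ by the nontrivial involution, $\Gal(M/\Q)\cong S_3$. Hence $M$ is the Galois closure of a non-Galois cubic field $K$, well defined up to isomorphism, and distinct subgroups $H$ give non-isomorphic $K$; conversely every cubic field whose closure lies in $H_\O$ arises this way. Since each index-$3$ subgroup is the kernel of exactly two surjections $\Cl(\O)\twoheadrightarrow\Z/3$, and the number of such surjections is $|\Cl(\O)/3\Cl(\O)|-1=|\Cl_3(\O)|-1$, the number of cubic fields produced is $(|\Cl_3(\O)|-1)/2$, as required. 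That $\sigma$ inverts the whole group, so that no abelian-sextic (``plus-type'') extensions intervene, is reflected in the isomorphism $\Cl_k(f)[3]^-\cong\Cl_3(\O)$ recorded above.

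It remains to match the image of this correspondence with the set of cubic fields satisfying $\Disc(\O)=c^2\Disc(K)$. Writing $\mathfrak f$ for the conductor of the cyclic cubic extension $M/k$, the relative discriminant of $M/k$ is $\mathfrak f^2$, so the tower formula gives $|\Disc(M)|=N(\mathfrak f)^2|\Disc(k)|^3$, while conductor--discriminant applied to the $S_3$-extension $M/\Q$ gives $|\Disc(M)|=|\Disc(K)|^2|\Disc(k)|$; comparing yields $\Disc(K)=\Disc(k)\,N(\mathfrak f)$. Because $M/\Q$ is Galois, $\mathfrak f$ is $\sigma$-invariant, and since its norm is the perfect square $N(\mathfrak f)=\Disc(K)/\Disc(k)=:f_0^2$, one checks that $\mathfrak f=(f_0)\O_k$ for the rational integer $f_0$. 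The containment $M\subseteq H_\O$ is then equivalent to $f_0\mid f$, which, together with $\Disc(\O)=\Disc(k)f^2$, is exactly the condition $\Disc(\O)=(f/f_0)^2\Disc(K)$. Conversely, any cubic field $K$ with $\Disc(\O)=c^2\Disc(K)$ has quadratic resolvent $\Q(\sqrt{\Disc(K)})=\Q(\sqrt{\Disc(\O)})=k$ and, by the same computation, conductor $f_0\mid f$, so its closure lies in $H_\O$ and $K$ is in the image.

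I expect the genuine obstacle to be this last identification, namely verifying that ``$\mathfrak f=(f_0)$ with $f_0\mid f$'' is equivalent to ``$M\subseteq H_\O$''. This is where the passage between the ray class field $K_f$ and the ring class field $H_\O$ matters: membership in $K_f$ is governed by the conductor dividing $f$, but one must use the sign decomposition and the isomorphism $\Cl_k(f)[3]^-\cong\Cl_3(\O)$ to see that, for the dihedral (minus-type) cubic extensions at hand, membership in $K_f$ already forces membership in $H_\O$. The remaining local conductor computations---especially at the primes $2$, $3$, and the primes ramifying in $k$---are the most delicate point, although the classical fact that $\Disc(K)/\Disc(k)$ is a perfect square makes the norm bookkeeping come out cleanly.
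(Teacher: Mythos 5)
Your proposal is correct and follows essentially the same route as the paper's proof: both count index-$3$ subgroups of $\Cl(\O)$ via the inversion (dihedral) action of $\Gal(k/\Q)$, both use the discriminant relation $\Disc(K)=\Disc(k)f_0^2$ with $f_0\mid f$, and both reduce the crux --- that conductor divisibility forces containment in the ring class field --- to the vanishing of the relevant minus part, i.e.\ the lemma $\Cl_k(f)[3]^-\cong\Cl_3(\O)$ that the paper quotes from Nakagawa. The only difference is packaging: you organize the argument around the ring class field $H_\O$ and its generalized dihedral Galois group over $\Q$, whereas the paper works directly with index-$3$ subgroups $H\le\Cl_k(f)[3]$, proves $H^+=\Cl_k(f)[3]^+$, and then descends to $\Cl(\O)[3]$ before performing the same Pontryagin-duality count $(|\Cl_3(\O)|-1)/2$.
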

\begin{proof}
  Let $K$ be a non-Galois cubic field. Then the normal closure
  $\widetilde{K}$ of $K$ over $\bQ$ contains a unique quadratic field
  $k$. One checks that the discriminants of $K$ and $k$ satisfy
  $\Disc(K) = \Disc(k)f^2$, where $f$ is the conductor of the cubic
  extension $\widetilde{K}/k$. By class field theory,
  $\widetilde{K}/k$ corresponds to a subgroup $H$ of $\Cl_k(f)[3]$ of
  index $3$. Since $\widetilde{K}/\bQ$ is a Galois extension, $H$ is a
  $\Gal(k/\bQ)$-module.  If~$\sigma$ denotes the nontrivial
  automorphism in $\Gal(k/\bQ)$, we see that $\widetilde{K}$ is Galois
  over $\bQ$ if and only if $\sigma(\widetilde{K}) =
  \widetilde{K}$. Artin reciprocity implies that the subgroup of
  $\Cl_k(f)[3]$ corresponding to $\sigma(\widetilde{K})$ is the image
  of $H$ under the action of $\sigma$ on $\Cl_k(f)[3]$. Thus, since
  $\widetilde{K}$ is Galois, we conclude that $H$ is stable under
  $\sigma$, and we can write $H = H^+ \oplus H^-$ where $H^\pm := H
  \cap \Cl_k^{\pm}(f)[3]$.

  We now show that $H^+ = \Cl_k^+(f)[3]$. Consider the exact sequence
	$$1 \rightarrow \Gal(\widetilde{K}/k) \rightarrow \Gal(\widetilde{K}/\bQ) \rightarrow \Gal(k/\bQ) \rightarrow 1.$$
        Note that, by definition, $\Gal(\widetilde{K}/k) \cong
        \Cl_k(f)[3]/H$. For any lift of $\sigma$ to $\tilde{\sigma}
        \in \Gal(\widetilde{K}/\bQ)$, Artin reciprocity implies that
        the action of conjugation on $\Gal(\widetilde{K}/k)$ by
        $\tilde{\sigma}$ corresponds to acting by $\sigma$ on
        $\Cl_k(f)[3]/H$. Since $\Gal(\widetilde{K}/\bQ)$ is isomorphic
        to the symmetric group (and is not a direct product),
        conjugation $\Gal(\widetilde{K}/k)$ acts as inversion. Since
        the index of $H$ in $\Cl_k(f)[3]$ is of odd prime order,
        either $H^+ = \Cl_k(f)[3]^+$ or $H^- = \Cl_k(f)[3]^-$.  For
        $\tilde{\sigma}$ to act as inversion on $\Cl_k(f)[3]/H$, we
        must have $\Cl_k(f)[3]/H \cong \Cl_k(f)[3]^{-}/H^-$. By the
        previous lemma, $H$ corresponds to a subgroup of $\Cl(\cO)[3]$
        of index $3$, where $\cO$ is the unique quadratic order of index~$f$ in the ring of integers $\cO_k$.

        Conversely, let $H$ be a subgroup of $\Cl(\cO)[3] \cong
        \Cl_k(f)[3]^-$ of index $3$ where $\cO$ has index~$f$ in $\cO_k$. Then $H$ corresponds to a cubic extension of
        conductor~$d \mid f$, and the action of $\sigma$ is by
        inversion. The above exact sequence therefore does not split,
        and $\Gal(\widetilde{K}/\bQ) = \Gal(\tilde{K}/k) \rtimes
        \Gal(k/\bQ).$ Using Pontryagin duality, we then have
\begin{equation}\label{pont}
\begin{array}{rcl}
	& & \displaystyle \sum_{d \mid f} \#\{\mbox{non-Galois cubic fields with discriminant equal to } \Disc(k)\cdot d^2\} \\
	&=&  \displaystyle \#\{\mbox{subgroups of $\Cl(\O)$ of index $3$}\} = \frac{1}{2}\left(\left|\Cl_3(\cO)\right| - 1\right),
\end{array}
\end{equation} 
where $\cO$ is the quadratic order of index $f$ in the maximal order
of $k$; equivalently, $\cO$ is the unique order with discriminant
equal to $\Disc(k)\cdot f^2$. Note that conjugate cubic fields are
only counted once, and thus we obtain the desired statement (with $c =
\frac{f}{d}$).
\end{proof}

The integer $c$ corresponding to a cubic field $K$ in
Proposition~\ref{rcf} is called the {\it conductor}~of~$K$ relative to
$\O$.  In particular, we see that the conductor~$c$ of $K$ relative to
$\O$ must divide the conductor~$f$ of $\O$.

\subsection{A second proof of the mean number of 3-torsion elements
  in the class groups of quadratic orders (Proof of Theorem~\ref{thmorders} via class field theory)}

Proposition~\ref{rcf} shows that Theorem 2 may be proved by summing,
over all quadratic orders $\O$ of absolute discriminant less than $X$,
the number of cubic fields $K$ such that $\Disc(\cO)/\Disc(K)$ is a
square.  However, in this sum, a single cubic field $K$ may be counted
a number of times since there are many $\O$ for which
$\Disc(\O)/\Disc(K)$ is a square, and one must control the asymptotic
behavior of this sum as $X\to\infty$.

To accomplish this, we rearrange the sum as a sum over the conductor $f$ of $\O$, and then sum
over $\O$ in the interior of this main sum.  This allows us to use a uniformity estimate for large $f$, yielding the desired asymptotic formulae.  More precisely, for $X$ large and $i = 0,1$, we are interested in evaluating
\begin{equation}\label{ni}
N^{(i)}(X) :=\sum_{0<(-1)^i\Disc(\O)<X} \#\{\mbox{cubic fields $K$ such that $\frac{\Disc(\O)}{\Disc(K)}=c^2$, $c \in \bZ$}\}.
\end{equation}
We rearrange this as a sum over $c$ and subsequently over cubic fields:
\begin{equation}\label{theabove}
\begin{array}{rcl}
	N^{(i)}(X) &=& \displaystyle{\sum_{c=1}^\infty \sum_{0<(-1)^i\Disc(\O)<X} \#\{\mbox{cubic fields $K$ such that $\Disc(K) = \Disc(\cO)/c^2$}\}} \\
				&=& \displaystyle{\sum_{c=1}^\infty \sum_{{\mbox{\scriptsize non-Galois $K$ s.t. }}\atop{\mbox{\scriptsize $0 < (-1)^i \Disc(K) < X/c^2$}}} 1}. \\
\end{array}
\end{equation}
Let $Y$ be an arbitrary positive integer. From (\ref{theabove}) and Theorem~\ref{cubics}, we obtain
\begin{eqnarray*}
N^{(i)}(X) &=& \displaystyle\sum_{c=1}^{Y-1}\frac{1}{2 n_i \zeta(3)}\cdot\frac{X}{c^2} + o(X)  + O\left(\displaystyle\sum_{c=Y}^\infty X/c^2\right)\\
		   &=& \displaystyle\sum_{c=1}^{Y-1} \frac{1}{2n_i \zeta(3)}\cdot \frac{X}{c^2} + o(X) + O(X/Y),
\end{eqnarray*}
where $n_0 = 6$ and $n_1 = 2$ (i.e., $n_i$ is the size of the automorphism group of $\bR^3$ if $i = 0$ and $\bR\otimes \bC$ if $i = 1$).
Thus,
	$$\lim_{X \rightarrow \infty} \frac{N^{(i)}(X)}{X} =  \sum_{c=1}^{Y-1} \frac{1}{2n_i \zeta(3)}\cdot \frac{1}{c^2} + O(1/Y).$$
Letting $Y$ tend to $\infty$, we conclude that
	$$\lim_{X \rightarrow \infty} \frac{N^{(i)}(X)}{X} =  \sum_{c=1}^{\infty} \frac{1}{2n_i \zeta(3)}\cdot\frac{1}{c^2} = \frac{\zeta(2)}{2n_i \zeta(3)}.$$
Finally, using Proposition~\ref{rcf} and (\ref{disc}), we obtain 
	$$\lim_{X \rightarrow \infty} \frac{\displaystyle\sum_{0<(-1)^i\Disc(\O)<X} |\Cl_3(\cO)|}{\displaystyle\sum_{0<(-1)^i\Disc(\O)<X} 1} = 1 + \lim_{X\rightarrow \infty} \frac{4\cdot N^{(i)}(X)}{X} = 
	      \begin{cases} \displaystyle 1 + \frac{\zeta(2)}{3\zeta(3)} & \mbox{if } i = 0, \\[.25in]
	       \displaystyle 1 + \frac{\zeta(2)}{\zeta(3)} & \mbox{if } i = 1.
	      \end{cases}$$

\subsection{The mean number of 3-torsion elements in the class groups
  of quadratic orders in acceptable families (Proof of Theorem~\ref{gensigmaord})}

We now determine the mean number of 3-torsion elements in the class
groups of quadratic orders satisfying any acceptable set of local
conditions. As described in the introduction, for each prime~$p$, let
$\Sigma_p$ be a set of isomorphism classes of nondegenerate quadratic
rings over $\bZ_p$. Recall that a collection $\Sigma = (\Sigma_p)$ is
\emph{acceptable} if for all sufficiently large $p$, the set
$\Sigma_p$ contains the maximal quadratic rings over $\bZ_p$. We
denote by $\Sigma$ the set of quadratic orders $\cO$ over $\bZ$, up to
isomorphism, such that $\cO \otimes \bZ_p \in \Sigma_p$ for all $p$.
For a quadratic order~$\cO$, we write ``$\cO \in \Sigma$'' (or say
that ``$\cO$ is a $\Sigma$-order'') if $\cO \otimes \bZ_p \in
\Sigma_p$ for all primes $p$.

Let us fix an acceptable collection $\Sigma = (\Sigma_p)$ of local specifications. We first recall a necessary generalization of Theorem~\ref{cubics}:

\begin{theorem}[{\bf \cite[Thm.~8]{BST}}]\label{gencubics} Let
  $(\Sigma^{(3)}_p) \cup \Sigma^{(3)}_\infty$ be an acceptable
  collection of local specifications {\em for cubic orders}, i.e., for
  all sufficiently large primes $p$, $\Sigma^{(3)}_p$ contains all
  maximal cubic rings over $\bZ_p$ that are not totally ramified. Let
  $\Sigma^{(3)}$ denote the set of all isomorphism classes of orders
  $\cO_3$ in cubic fields for which $\cO_3 \otimes \bQ_p \in
  \Sigma^{(3)}_p$ for all $p$ and $\cO_3 \otimes \bR \in
  \Sigma^{(3)}_\infty$, and denote by $N_3(\Sigma^{(3)},X)$ the number
  of cubic orders $\cO_3 \in \Sigma^{(3)}$ that satisfy
  $|\Disc(\cO_3)| < X$. Then
  $$N_3(\Sigma^{(3)},X) = \left(\frac{1}{2}\sum_{R_3 \in \Sigma^{(3)}_\infty} \frac{1}{|\Aut(R_3)|}\right)\cdot\prod_p \left(\frac{p-1}{p}\cdot\sum_{R_3 \in \Sigma^{(3)}_p} \frac{1}{\Disc_p(R_3)}\cdot \frac{1}{|\Aut(R_3)|}\right)\cdot X + o(X),$$
 where $\Disc_p(\cdot)$ denotes the $p$-power of $\Disc(\cdot)$.
\end{theorem}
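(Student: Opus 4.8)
The plan is to prove Theorem~\ref{gencubics} by parametrizing cubic orders via integral binary cubic forms and then counting the relevant $\GL_2(\Z)$-orbits by the geometry-of-numbers method underlying Theorem~\ref{thmdensity}, sieved down to the family $\Sigma^{(3)}$. First I would invoke the Delone--Faddeev correspondence \cite{DF}: there is a discriminant-preserving bijection between isomorphism classes of cubic rings $R_3$ and $\GL_2(\Z)$-orbits on the lattice $V_\Z$ of integer-coefficient binary cubic forms, under which $\Aut(R_3)$ is identified with the stabilizer $\Stab_{\GL_2(\Z)}(f)$ and under which orders in cubic fields correspond to irreducible forms. An order in a non-Galois cubic field has trivial automorphism group over $\Q$, and the cyclic cubic fields (whose orders have $\Aut \cong \Z/3$) number only $O(\sqrt{X})$, so up to an $o(X)$ error each order in a cubic field of bounded discriminant corresponds to exactly one $\GL_2(\Z)$-orbit; hence $N_3(\Sigma^{(3)},X)$ equals the number of irreducible orbits in the locus cut out by the local conditions. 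The archimedean specification $\Sigma^{(3)}_\infty$ selects the sign of $\Disc$ (equivalently, whether $f$ has one or three real roots), which is exactly the separation into positive- and negative-discriminant forms underlying Theorem~\ref{thmdensity}.

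Next, for each prime $p$ the local condition $\Sigma^{(3)}_p$ corresponds, by Delone--Faddeev over $\Z_p$, to a $\GL_2(\Z_p)$-invariant subset $S_p \subseteq V_{\Z_p}$. For a family defined by conditions at finitely many primes, $S = \bigcap_p S_p$ is a union of congruence classes and the $\GL_2(\Z)$-orbit count on $V_\Z$ of Davenport (the form of Theorem~\ref{thmdensity} for the full lattice, cf.\ the remark following it and \cite{BST}) applies, giving an asymptotic $c_i \prod_p \mu_p(S_p)\,X$, where $c_i$ packages the archimedean volume and $\mu_p$ is the normalized $p$-adic density. The key local computation is to evaluate $\mu_p(S_p)$ as a mass: integrating the additive measure on $V_{\Z_p}$ along the fibers of the discriminant map and grouping forms by the cubic ring $R_3 = \cO_3 \otimes \Z_p$ they parametrize yields
\[
\mu_p(S_p) = \frac{p-1}{p}\sum_{R_3 \in \Sigma^{(3)}_p} \frac{1}{\Disc_p(R_3)}\cdot\frac{1}{|\Aut(R_3)|},
\]
where the weight $1/|\Aut(R_3)| = 1/|\Stab|$ is the standard orbit-counting (mass) weight and the factor $\frac{p-1}{p}$ is the normalization constant arising from this change of variables. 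An identical computation at the real place identifies $c_i$ with $\frac12\sum_{R_3 \in \Sigma^{(3)}_\infty}\frac{1}{|\Aut(R_3)|}$, the two cases $|\Aut|=6$ (for $\R^3$) and $|\Aut|=2$ (for $\R\times\C$) reproducing the constants $\frac1{12}$ and $\frac14$ of Theorem~\ref{cubics}.

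The heart of the argument, and the step I expect to be hardest, is passing from finitely many local conditions to the full family $\Sigma^{(3)}$, which is specified at all primes simultaneously. Here I would argue exactly as in \S\ref{thm1pf}: truncate to the primes $p < Y$, apply the finite-level count to obtain the truncated main term $c_i\prod_{p<Y}\mu_p(S_p)\,X$, and then control the tail. Because the collection is acceptable, $S_p$ contains all non-totally-ramified maximal forms for $p \gg 0$, so the error from imposing the remaining conditions at $p \geq Y$ is dominated by the number of forms whose discriminant is non-maximal at some large $p$. The required input is the uniformity estimate --- the generalization to $V_\Z$ of Proposition~\ref{unifest}, established in \cite{BST} --- asserting that the number of irreducible orbits with $|\Disc|<X$ and discriminant non-maximal at $p$ is $O(X/p^2)$ with an implied constant independent of $p$. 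This makes $\sum_{p\geq Y} O(X/p^2)$ negligible as $Y\to\infty$, so that the $\limsup$ and $\liminf$ of $N_3(\Sigma^{(3)},X)/X$ both converge to the absolutely convergent Euler product $c_i \prod_p \mu_p(S_p)$. The main obstacle is precisely this uniformity over all $p$: the naive finite-level count gives no control as the level grows, and establishing the power-saving tail (equivalently, the convergence of the product together with a negligible error) is the genuinely hard technical content. Finally, since reducible forms and the degenerate locus $\Disc = 0$ together contribute only $o(X)$, they do not affect the count of orders in cubic fields, and the proof is complete.
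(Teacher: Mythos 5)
This statement is not proved in the paper at all: it is imported verbatim as \cite[Thm.~8]{BST}, so there is no internal proof to compare against. Your sketch is, in essence, a correct reconstruction of the proof given in \cite{BST} (and of the parallel arguments this paper does carry out on the quadratic side in \S3 and \S6): the Delone--Faddeev correspondence in place of Theorem~\ref{bcfideal}, Davenport's orbit count (the $V_\Z$/$\GL_2(\Z)$ analogue of Theorem~\ref{thmdensity}), a local mass computation of the type quoted from \cite[Lem.~32]{BST} in Section~6, and the uniformity estimate generalizing Proposition~\ref{unifest} to pass from finitely many to all primes --- and you correctly identify that last step as the genuinely hard input. One imprecision to fix: your formula $\mu_p(S_p) = \frac{p-1}{p}\sum_{R_3}\frac{1}{\Disc_p(R_3)|\Aut(R_3)|}$ is not the literal $p$-adic density (for the full family it exceeds $1$); the correct density carries the factor $\#\GL_2(\F_p)/p^4 = \bigl(1-\tfrac1p\bigr)\bigl(1-\tfrac1{p^2}\bigr)$, and correspondingly the archimedean constant is $\pi^2/(12 n_i)$ rather than $1/(2n_i)$. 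The discrepancies cancel, since $\frac{\pi^2}{12}\prod_p\bigl(1-\tfrac1{p^2}\bigr) = \frac12$, which is exactly the bookkeeping by which \cite{BST} arrive at the stated form of the constant; as written, though, each of your two intermediate identifications is individually false even though their product is right.
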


We can use the above theorem to prove Theorem~\ref{gensigmaord} by
comparing the number of 3-torsion elements in the class groups of
quadratic $\Sigma$-orders $\cO$ of absolute discriminant less than $X$
and the number of cubic fields corresponding to such class group
elements of $\cO \in \Sigma$ with absolute discriminant less than $X$
via Proposition~\ref{rcf}. Analogous to (\ref{ni}), we define
  \begin{equation}\label{nisigma}
  \begin{array}{rcl}
  N^{(i)}(X, \Sigma) &:=&\displaystyle \sum_{{\mbox{\scriptsize $\cO \in \Sigma$ s.t.}}\atop{\mbox{\scriptsize $0<(-1)^i\Disc(\O)<X$}}} \#\{\mbox{cubic fields $K$ such that $\frac{\Disc(\O)}{\Disc(K)}=c^2$, $c \in \bZ$}\} \\
	&=& \displaystyle{\sum_{c=1}^\infty \sum_{{\mbox{\scriptsize $\cO \in \Sigma$ s.t.}}\atop{\mbox{\scriptsize $0<(-1)^i\Disc(\O)<X$}}} \#\{\mbox{cubic fields $K$ such that $\Disc(K) = \frac{\Disc(\cO)}{c^2}$}\}}. 
  \end{array}
  \end{equation}
  For any $c \in \bZ$, let $c^{-2}\Sigma$ denote the set of quadratic
  orders $\cO$ that contain an index $c$ $\Sigma$-order.  We can
  decompose $c^{-2}\Sigma$ into the following local specifications:
  for all $p$, if $p^{c_p} \mid\mid c$ where $c_p \in \bZ_{\geq 0}$,
  let $p^{-2c_p}\Sigma_p$ denote the set of nondegenerate quadratic
  rings over $\bZ_p$ which contain an index $p^{c_p}$ subring that
  lies in $\Sigma_p$. It is then clear that $(p^{-2c_p}\Sigma_p) =
  c^{-2}\Sigma$ is acceptable since $\Sigma$
  is. 

  Finally, let $\Sigma^{(3),c}$ be the set of cubic fields $K$ such
  that there exists a quadratic order $\cO \in c^{-2}\Sigma$ with
  $\Disc(K) = \Disc(\cO)$. These are the set of cubic fields $K$ such
  that their {\em quadratic resolvent ring} contains a $\Sigma$-order
  with index $c$, or equivalently, is a $c^{-2}\Sigma$-order. Let
  $D(K)$ denote the quadratic resolvent ring of the cubic field $K$,
  i.e., $D(K)$ is the unique quadratic order with discriminant equal to
  that of $K$. The local specifications for $\Sigma^{(3),c}$ are as
  follows: for all $p$ and with~$c_p$ defined as above,
  $\,\Sigma^{(3),c_p}_p$ is the set of 
  \'etale cubic algebras $K_p$ over $\Q_p$ such
  that the quadratic resolvent ring $D(K_p)$ over $\Z_p$ is a
  $p^{-2c_p}\Sigma_p$-order. Meanwhile, $\Sigma^{(3),c}_\infty$ has
  one cubic ring over $\bR$ specified by the choice $i = 0$ or $1$: it
  contains $\bR^3$ if $i = 0$ and $\bR \otimes \bC$ if $i = 1$. Then
  $\Sigma^{(3),c} = (\Sigma_p^{(3),c_p}) \cup \Sigma^{(3),c}_\infty$,
  and in order to use Theorem~\ref{gencubics}, it remains to show that
  $\Sigma^{(3),c}$ is acceptable.

  To show the acceptability of $\Sigma^{(3),c}$, consider any $p>2$
  large enough so that $\Sigma_p$ contains all maximal quadratic
  rings and $c_p = 0$, i.e., $p \nmid c$. Let $K_p$ be an \'etale cubic
  algebra over $\bQ_p$ that is not totally ramified. This implies that
  $p^2 \nmid \Disc(K_p)$, and so $p^2 \nmid D(K_p)$; therefore,
  $D(K_p)$ must be maximal. By our choice of $p$, we have $D(K_p) \in
  \Sigma_p$, and so $K_p \in \Sigma^{(3),c}$.  Hence $\Sigma^{(3),c}$
  is acceptable.

Using these definitions, we can rewrite $N^{(i)}(X,\Sigma)$ as
  \begin{equation}\label{theabove2}
  \begin{array}{rcl}
  N^{(i)}(X, \Sigma) &=& \displaystyle{\sum_{c=1}^\infty \sum_{{\mbox{\scriptsize $\cO \in \Sigma$ s.t.}}\atop{\mbox{\scriptsize $0<(-1)^i\Disc(\O)<X$}}} \#\{\mbox{cubic fields $K$ such that $\Disc(K) = \frac{\Disc(\cO)}{c^2}$}\}} \\
  			&=& \displaystyle{\sum_{c=1}^\infty \sum_{{\mbox{\scriptsize $\cO \in c^{-2}\Sigma$ s.t.}}\atop{\mbox{\scriptsize $0<(-1)^i\Disc(\O)<\frac{X}{c^2}$}}} \#\{\mbox{cubic fields $K$ such that $\Disc(K) = \Disc(\cO)$}\}}	 \\
			&=& \displaystyle{\sum_{c=1}^\infty \sum_{{\mbox{\scriptsize $K \in \Sigma^{(3),c}$ s.t. }}\atop{\mbox{\scriptsize $0 < (-1)^i \Disc(K) < \frac{X}{c^2}$}}} 1} \\
			&=& \displaystyle{\sum_{c=1}^\infty  ~ ~N_3\left(\Sigma^{(3),c},\frac{X}{c^2}\right)}.
  \end{array}
  \end{equation}
Again, let $Y$ be an arbitrary positive integer. From (\ref{theabove2}), Theorem~\ref{gencubics}, and Theorem~\ref{cubics}, we obtain
\begin{equation*}
  \begin{array}{rcl}
  N^{(i)}(X, \Sigma) &=& \displaystyle{\sum_{c=1}^{Y-1} \frac{1}{2n_i} \cdot \prod_p \left(\frac{p-1}{p}\cdot\sum_{K_p \in \Sigma^{(3),c_p}_p} \frac{1}{\Disc_p(K_p)}\cdot \frac{1}{|\Aut(K_p)|}\right)\cdot \frac{X}{c^2}} \\
  && + \,\,\displaystyle{O\left(\sum_{c=Y}^\infty X/c^2\right)} + \,o(X),
  \end{array}
 \end{equation*}
where $n_0 = 6$ and $n_1 = 2$ as before. Thus, since $O\left(\sum_{c=Y}^\infty X/c^2\right) = O\left(X/Y\right)$, we have
	$$\lim_{X\rightarrow \infty} \frac{N^{(i)}(X,\Sigma)}{X} = \displaystyle{\sum_{c=1}^{Y-1} \frac{1}{2n_i} \cdot \prod_p \left(\frac{p-1}{p}\cdot\sum_{K_p \in \Sigma^{(3),c_p}_p} \frac{1}{\Disc_p(K_p)}\cdot \frac{1}{|\Aut(K_p)|}\right)\cdot \frac{1}{c^2}} + \displaystyle{O\left(1/Y\right)}.$$
Letting $Y$ tend to $\infty$, we conclude that 
	$$\lim_{X\rightarrow \infty} \frac{N^{(i)}(X,\Sigma)}{X} = \displaystyle{\sum_{c=1}^{\infty} \frac{1}{2n_i} \cdot \prod_p \left(\frac{p-1}{p}\cdot\sum_{K_p \in \Sigma^{(3),c_p}_p} \frac{1}{\Disc_p(K_p)}\cdot \frac{1}{|\Aut(K_p)|}\right)\cdot \frac{1}{c^2}}.$$
Let $M_\Sigma$ be defined as in (\ref{massdef}) and let $M^{\eq}_\Sigma$ be the following product of local masses:

\begin{equation}\label{massdef2}
M^{\eq}_{\Sigma} := 
\prod_p\,
\frac{{\displaystyle\sum_{R\in\Sigma_p}\frac 
{C^{\eq}(R)}
{\Disc_p(R)}}}
{{\displaystyle \sum_{R\in\Sigma_p}\frac1{\Disc_p(R)}\cdot\frac{1}{\#\Aut(R)}}} 
= \prod_p\, \frac{{\displaystyle \sum_{R\in \Sigma_p}\frac{C^{\eq}(R)}{\Disc_p(R)}}}{\displaystyle{\sum_{R \in \Sigma_p} {\frac{1}{2\cdot \Disc_p(R)}}}},
\end{equation}
where $C^{\eq}(R)$ is defined for an \'etale quadratic algebra $R$ over $\bZ_p$ as the (weighted) number of \'etale cubic algebras $K_p$ over $\bQ_p$ such that $R = D(K_p)$:
	$$C^{\eq}(R) := \sum_{{\mbox{\scriptsize $K_p$ \'etale cubic $/\bQ_p$}}\atop{\mbox{\scriptsize s.t. $R = D(K_p)$}}} \frac1{\#\Aut(K_p)}.$$ 
Then
\begin{equation}\label{star2}
  \begin{array}{rcl}
 \displaystyle{ \lim_{X\rightarrow \infty} \frac{N^{(i)}(X, \Sigma)}{X}} &=& \displaystyle{\frac{1}{2n_i}\cdot \displaystyle\sum_{c=1}^\infty \frac{1}{c^2} \cdot \prod_p \left(\frac{p-1}{p}\cdot\sum_{K_p \in \Sigma^{(3),c_p}_p} \frac{1}{\Disc_p(K_p)}\cdot \frac{1}{|\Aut(K_p)|}\right)}  \\
  			&=& \displaystyle{\frac{1}{2n_i}\cdot \displaystyle\sum_{c=1}^\infty \frac{1}{c^2} \cdot \prod_p \left(\frac{p-1}{p}\cdot\sum_{R \in p^{-2c_p}\Sigma_p} \frac{1}{\Disc_p(R)} \cdot C^{\eq}(R)\right)}  \\
			&=& \displaystyle{\frac{1}{2n_i}\cdot \displaystyle \prod_p \left(\frac{p-1}{p}\cdot\sum_{i=0}^\infty \sum_{{\mbox{\scriptsize $R \in \Sigma_p$  s.t. }}\atop{\mbox{\scriptsize $\exists R'$ s.t. $ [R':R] = p^{i}$}}} \frac{1}{\Disc_p(R)} \cdot C^{\eq}(R')\right)}  \\
			&=& \displaystyle{\frac{1}{2n_i}\cdot \displaystyle \prod_p \left(\frac{p-1}{p}\cdot\sum_{{R \in \Sigma_p }} \frac{1}{\Disc_p(R)} \cdot C(R)\right)} .
  \end{array}
 \end{equation}	
Recall that $C(R)$ is defined as the (weighted) number of etale cubic algebras $K_p$ over $\bQ_p$ such that $R \subset D(K_p)$ (cf.\ Equation (\ref{Cdef})). The final equality follows from the fact that if we fix $R \in \Sigma_p$, the cubic algebras $K_p$ with discriminant $p^{2i}\cdot\Disc_p(R)$ are disjoint for distinct choices of $i$. (The pentultimate equality follows from unique factorization of integers.)

Using (\ref{pont}) and (\ref{nisigma}), we see that 
	\begin{equation}\label{star}
	2\cdot N^{(i)}(X,\Sigma) = \sum_{{\mbox{\scriptsize $\cO \in \Sigma$ s.t.}}\atop{\mbox{\scriptsize $0<(-1)^i\Disc(\O)<X$}}} (\#\Cl_3(\cO)-1).
	\end{equation}
We now have the following elementary lemma counting quadratic orders:
\begin{lemma}[{\bf \cite[\S4]{Bhamass1}}]\label{dht}
\hfill
\begin{itemize}
\item[$($a$)$] The \,number \,of \,\,real \,\,$\Sigma$-orders\, $\O$ \,with
  \,\,$|\Disc(\O)|<X$ is asymptotically
  $$\frac{1}{2}\cdot\prod_p\Bigl(\frac{p-1}{p}\cdot \sum_{R\in\Sigma}
\frac{1}{\Disc_p(R)}\cdot\frac{1}{\#\Aut(R)}\Bigr)\cdot X.$$ 

\item[$($b$)$] The number of complex $\Sigma$-orders $\O$ with $
|\Disc(\O)|<X$ is asymptotically 
  $$\frac{1}{2} \cdot \prod_p\Bigl(\frac{p-1}{p}\cdot \sum_{R\in\Sigma}
\frac{1}{\Disc_p(R)}\cdot\frac{1}{\#\Aut(R)}\Bigr)\cdot X.$$
\end{itemize}
\end{lemma}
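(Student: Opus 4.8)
The plan is to reduce the count to one of integers in prescribed $p$-adic sets and then evaluate the resulting Euler product. Recall that the discriminant gives a bijection between isomorphism classes of nondegenerate quadratic rings over $\bZ$ and integers $D\equiv 0,1\pmod 4$; under it, real orders correspond to $D>0$ and complex orders to $D<0$ (the perfect-square values $D$, giving the split ring, number only $O(\sqrt X)=o(X)$ and so may be ignored). Writing $\cO_D$ for the quadratic ring of discriminant $D$, the number of real (resp.\ complex) $\Sigma$-orders of absolute discriminant less than $X$ is exactly the number of $D$ with $0<D<X$ (resp.\ $0<-D<X$) such that $\cO_D\otimes\bZ_p\in\Sigma_p$ for every $p$. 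The first step is to note that a quadratic ring over $\bZ_p$ is determined up to isomorphism by the square class of its discriminant in $\bZ_p$, so the condition $\cO_D\otimes\bZ_p\in\Sigma_p$ depends only on $D$ modulo $(\bZ_p^\times)^2$; it therefore cuts out a set $S_p\subseteq\bZ_p$ that is a union of square classes, and our count becomes the number of $D$ in a fixed interval lying in $S_p$ for all $p$.

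The second step is the local density computation. Normalizing the additive Haar measure so that $\mu_p(\bZ_p)=1$, I would prove
\[
\mu_p(S_p)\;=\;\epsilon_p\cdot\frac{p-1}{p}\sum_{R\in\Sigma_p}\frac{1}{\Disc_p(R)}\cdot\frac{1}{\#\Aut(R)},
\]
where $\epsilon_p=1$ for odd $p$ and $\epsilon_2=\tfrac12$. For odd $p$ this is direct: the set of $D$ with $v_p(D)=k$ has measure $\tfrac{p-1}{p}p^{-k}$ and splits into two unit square classes of equal measure, each matching one of the two quadratic rings $R$ with $\Disc_p(R)=p^k$ (both having $\#\Aut(R)=2$), so each square class has measure $\tfrac{p-1}{p}\cdot\tfrac{1}{p^k}\cdot\tfrac12$, exactly the corresponding summand. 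For $p=2$ the same bookkeeping applies, but the discriminant constraint $D\equiv 0,1\pmod 4$ selects only part of $\bZ_2^\times/(\bZ_2^\times)^2$, and comparing the measure of a single square class against $\tfrac{1}{\Disc_2(R)\#\Aut(R)}$ reveals that the mass overcounts by a factor of $2$; this produces $\epsilon_2=\tfrac12$. Multiplying over all $p$ extracts this single factor of $\tfrac12$, which is the $\tfrac12$ appearing out front in the statement.

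The final step passes from local to global, justifying
\[
\#\{\cO\in\Sigma:0<\pm\Disc(\cO)<X\}\;=\;X\cdot\prod_p\mu_p(S_p)+o(X).
\]
Because $(\Sigma_p)$ is acceptable, for all large $p$ the set $\Sigma_p$ contains every maximal ring over $\bZ_p$, so $S_p\supseteq\{D:p^2\nmid D\}$ and hence $\mu_p(S_p)=1-O(p^{-2})$; in particular the Euler product converges. This same bound gives a uniform tail estimate on the $D$ failing the condition at some large prime, which is precisely the input needed to run the truncate-at-$Y$-and-let-$Y\to\infty$ sieve already used in \S\ref{thm1pf}. I expect this passage---controlling the contribution of infinitely many local conditions uniformly---to be the only genuine obstacle, since the square-class identification and the density computation are elementary and acceptability is exactly what tames the tail. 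Recording that $\prod_p\mu_p(S_p)=\tfrac12\prod_p\bigl(\tfrac{p-1}{p}\sum_{R\in\Sigma_p}\tfrac{1}{\Disc_p(R)\#\Aut(R)}\bigr)$ then yields the asserted formula, identically in the real and complex cases: these differ only in the sign condition on $D$, and the positive and negative integers up to $X$ equidistribute in each $\bZ_p$ alike, so contribute the same density.
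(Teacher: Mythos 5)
Your argument is correct, and it is worth noting that the paper itself contains no proof of this lemma to compare against: Lemma~\ref{dht} is quoted from \cite[\S4]{Bhamass1}, and your write-up supplies, in a self-contained way, essentially the argument that the cited reference carries out (local square-class densities plus a squarefree-type sieve). The three pillars all check out. First, isomorphism classes of nondegenerate quadratic rings over $\Z_p$ do correspond to nonzero discriminants modulo $(\Z_p^\times)^2$, so membership in $\Sigma_p$ is a union-of-square-classes condition on $D$; this also silently corrects the statement's typo ($R\in\Sigma$ should read $R\in\Sigma_p$). Second, the density bookkeeping is right, and your $\epsilon_2=\tfrac12$ is exactly where the leading factor $\tfrac12$ of the lemma comes from: for odd $p$ a square class $p^k u(\Z_p^\times)^2$ has measure $\frac{p-1}{p}\cdot\frac{1}{p^k}\cdot\frac12$, matching the mass term on the nose, whereas at $p=2$ each valid class (measure $2^{-k}/8$ when $\Disc_2=2^k$) is exactly half of the corresponding mass term $\frac12\cdot 2^{-k}\cdot\frac12$, because $(\Z_2^\times)^2$ has index $4$ rather than $2$ in $\Z_2^\times$. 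A useful sanity check against the paper: taking $\Sigma_p$ to be all rings gives $\prod_p\mu_p(S_p)=\tfrac12$, recovering equation (\ref{disc}), and taking all maximal rings gives $\tfrac38\prod_{p>2}(1-p^{-2})=3/\pi^2$, recovering equation (\ref{maxordcount}). Third, the tail estimate really is elementary here: for large $p$ the complement of $S_p$ lies in $\{D: p^2\mid D\}$, whose count up to $X$ is at most $X/p^2$, so no analogue of the (much harder) Proposition~\ref{unifest} for cubic forms is needed. The only step I would tighten is the truncated sieve when some $\Sigma_p$ is infinite: then $S_p$ is an infinite union of square classes, not literally a finite set of congruence conditions, so you should additionally truncate in the valuation $v_p(D)$ and absorb the set $\{v_p(D)>K\}$ (density $p^{-K}$) into the error; this is routine but should be said.
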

By (\ref{star2}), (\ref{star}), and Lemma~\ref{dht}, we then obtain 
\begin{equation}
\begin{array}{rcl}
\displaystyle{\lim_{X \rightarrow \infty}} \displaystyle{\frac{\displaystyle{\sum_{{\cO \in \Sigma,}\atop{0<(-1)^i\Disc(\O)<X}}} \#\Cl_3(\cO)}{\displaystyle{\sum_{{\cO \in \Sigma,}\atop{0<(-1)^i\Disc(\O)<X}} 1 }}} 
		&=& 1 + \displaystyle{\lim_{X \rightarrow \infty}} \frac{\displaystyle{2\cdot N^{(i)}(X,\Sigma)}}{\displaystyle{\sum_{{\cO \in \Sigma,}\atop{0<(-1)^i\Disc(\O)<X}}}  1 } \\[.35in]
		&=& 1 +  \frac{\displaystyle{\frac{1}{n_i}\cdot \displaystyle \prod_p \left(\frac{p-1}{p}\cdot\sum_{{R \in \Sigma_p }} \frac{1}{\Disc_p(R)} \cdot C(R)\right)}}{\displaystyle{\frac{1}{2} \cdot \prod_p \left(\frac{p-1}{p} \cdot \sum_{R \in \Sigma_p}   \frac{1}{\Disc_p(R)} \cdot \frac{1}{\#\Aut(R)}\right)}} \\[.6in]
		&=& 1 + \displaystyle{\frac{2}{n_i}\cdot \prod_p \frac{\displaystyle{\sum_{{R \in \Sigma_p }} \frac{C(R)}{\Disc_p(R)}}}{\displaystyle{\sum_{R \in \Sigma_p} \frac{1}{2\cdot \Disc_p(R)}}}} \\
		&=& 1 + \displaystyle{\frac{2}{n_i} \cdot M_\Sigma}.
\end{array}
\end{equation}
As $n_0 = 6$ and $n_1 = 2$, this proves Theorem~\ref{gensigmaord}.

\subsection{Families of quadratic fields defined by finitely many
  local conditions always have the
  same average number of 3-torsion elements in their class groups (Proof of Corollary 4)}

We now consider the special case of Theorem~\ref{gensigmaord} where
$(\Sigma_p)$ is any acceptable collection of local specifications of
\emph{maximal} quadratic rings over $\bZ_p$. Then, if $\Sigma$ denotes
the set of all isomorphism classes of quadratic orders $\cO$ such that
$\cO \otimes \bZ_p \in \Sigma_p$ for all $p$, then $\Sigma$ will be a set
of maximal orders satisfying a specified set of local conditions at
some finite set of primes. We prove in this section that regardless of
what acceptable set of maximal orders $\Sigma$ is, the average size of
the 3-torsion subgroup in the class groups of imaginary (resp. real)
quadratic orders in $\Sigma$ is always given by $2$ (resp.\ $\frac{4}{3}$). To do so,
we use Theorem~\ref{gensigmaord} and show that $M_\Sigma = 1$ in these
cases.

\begin{lemma} For any maximal quadratic ring $R$ over $\bZ_p$, we have $C(R) = \frac{1}{2}$, where $C(R)$ denotes the weighted number of \'etale cubic algebras $K_p$ over $\bQ_p$ such that $R$ is contained in the unique quadratic algebra over $\bZ_p$ with the same discriminant as $K_p$ $($cf.\ Equation $(\ref{Cdef}))$.
\end{lemma}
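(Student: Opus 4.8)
The plan is to first reduce the computation to the ``equal-discriminant'' count and then to enumerate directly the finitely many contributing \'etale cubic algebras. Since $R$ is maximal, any inclusion $R\subset D(K)$ of quadratic rings over $\bZ_p$ forces $R\otimes\bQ_p=D(K)\otimes\bQ_p$ (both sides have $\bZ_p$-rank $2$), and then maximality of $R$ gives $R=D(K)$. Hence in the defining sum $(\ref{Cdef})$ only those $K$ with $D(K)=R$ survive, so that $C(R)=C^{\eq}(R)=\sum_{K:\,D(K)=R}\frac{1}{\#\Aut(K)}$, the sum running over isomorphism classes of \'etale cubic algebras $K/\bQ_p$ whose quadratic resolvent ring is exactly $R$. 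I would then split according to the three isomorphism types of maximal $R$: the split ring $\bZ_p\times\bZ_p$, the unramified ring $\bZ_{p^2}$, and the ring of integers $\cO_L$ of a ramified quadratic field $L$. Equivalently, these correspond to the resolvent character $\chi\colon \Gal(\overline{\bQ}_p/\bQ_p)\to S_3/A_3=\{\pm1\}$ being trivial, unramified nontrivial, or ramified.

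The key structural point is that $D(K)=R$ with $R$ maximal forces $\Disc(K)$ to have the \emph{minimal} valuation compatible with the resolvent $L=R\otimes\bQ_p$, which excludes every tamely or totally ramified cubic \emph{field} and leaves a short explicit list in each case. I would enumerate this list using the identification of \'etale cubic algebras $K$ with $\Gal$-conjugacy classes of homomorphisms $\phi\colon\Gal(\overline{\bQ}_p/\bQ_p)\to S_3$, under which $\#\Aut(K)$ equals the order of the centralizer of $\im\phi$ in $S_3$. When $\chi$ is trivial, $\phi$ lands in $A_3$ and the minimal-discriminant algebras are $\bQ_p^{3}$ (with $\#\Aut=6$) and the unramified cubic field $\bQ_{p^3}$ (with $\#\Aut=3$), contributing $\tfrac16+\tfrac13=\tfrac12$. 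When $\chi$ is unramified and nontrivial, the only surviving algebra is $\bQ_{p^2}\times\bQ_p$ (with $\#\Aut=2$), contributing $\tfrac12$; the $S_3$-cubic fields with this resolvent are totally ramified and are discarded by the discriminant condition. When $\chi$ is ramified, the only surviving algebra is $L\times\bQ_p$ (with $\#\Aut=2$), again contributing $\tfrac12$. Thus $C(R)=\tfrac12$ in every case. As a uniform bookkeeping check, the minimal-discriminant lifts $\phi$ of $\chi$ form a $\Gal$-conjugation-stable set of cardinality exactly $3$ in each case, and groupoid cardinality gives $\frac{3}{\#S_3}=\tfrac12$ directly.

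The main obstacle I anticipate is verifying the \emph{completeness} of these lists, i.e.\ that the discriminant-matching condition really does exclude all ramified cubic fields. For $p$ odd this is clean: a cubic field over $\bQ_p$ is either unramified (discriminant a unit, split resolvent) or totally tamely ramified (discriminant of valuation $2$), so no ramified cubic field can share the discriminant of a maximal unramified quadratic ring (valuation $0$) nor of a maximal ramified one (valuation $1$). The cases $p=2,3$ require more care because of wild ramification; there I would argue directly via the relation $\Disc(K)=\Disc(L)\cdot f^{2}$ (with $f$ the conductor of the cubic part over its resolvent $L$) that whenever $K$ is a ramified cubic field the ring $D(K)$ is strictly non-maximal, leaving only the reducible algebra $L\times\bQ_p$ — respectively the two unramified algebras in the split case — to contribute the weight $\tfrac12$.
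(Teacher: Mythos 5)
Your proposal is correct, and its skeleton matches the paper's proof: use maximality to reduce $C(R)$ to the exact-resolvent count (any containment $R\subset D(K)$ with $R$ maximal forces $R=D(K)$), enumerate the \'etale cubic algebras with $D(K)=R$, and show that ramified cubic fields never contribute because their resolvent ring is non-maximal; the split case then gives $\frac16+\frac13=\frac12$ and the two field cases give $\frac12$ from the reducible algebra $\bQ_p\times L$, exactly as in the paper. The differences are in the bookkeeping, and one of them is substantive. The paper enumerates explicit rings (four maximal rings for odd $p$, eight for $p=2$) and justifies the exclusion step by asserting that a ramified cubic $K_p$ has discriminant divisible by $p^2$, ``which implies $D(K_p)$ is not maximal''; for $p$ odd this is immediate, since maximal quadratic rings over $\bZ_p$ have discriminant valuation at most $1$, but at $p=2$ the assertion is incomplete as stated, because maximal quadratic rings over $\bZ_2$ can have discriminant of valuation $2$ or $3$ (e.g.\ $\bZ_2[\sqrt{-1}]$, of discriminant $-4$). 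The missing observation is that the quadratic resolvent of a (tame, totally ramified) cubic over $\bQ_2$ is unramified---e.g.\ $\Disc(\bQ_2(2^{1/3}))=-108=(-27)\cdot 2^2$ with $-27\equiv 5\pmod 8$---so the factor $4$ is a square conductor rather than part of a fundamental discriminant. Your conductor-discriminant argument ($\Disc(K)=\Disc(L)\cdot f^2$; $K$ ramified forces the cubic extension $\widetilde K/L$ to be ramified, hence $p\mid f$, hence $D(K)$ is an order of conductor divisible by $p$) supplies exactly this point and works uniformly at $p=2$ and $p=3$, tame and wild alike, so your treatment of the small primes is actually more careful than the paper's; your Galois-homomorphism bookkeeping and the groupoid count $3/\#S_3=\frac12$ are pleasant but equivalent to the paper's direct enumeration. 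One phrasing slip to fix: your claim that for odd $p$ every ramified cubic field over $\bQ_p$ is tame with discriminant valuation $2$ fails at $p=3$ (wild ramification, valuation $3$, $4$, or $5$); this is harmless since you route $p=3$ through the conductor argument anyway, but the clean statement of the tame case is for $p\nmid 6$.
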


\begin{proof} For all primes $p \neq 2$, there are 4 maximal quadratic rings over $\bZ_p$ (up to isomorphism), namely $\bZ_p \oplus \bZ_p$, $\bZ_p[\sqrt{p}]$, $\bZ_p[\sqrt{\epsilon}]$, and $\bZ_p[\sqrt{\epsilon\cdot p}]$, where $\epsilon$ is an integer which is not a square mod $p$. For each choice of $R$, we compute $C(R)$:
\begin{eqnarray*}
C(\bZ_p \oplus \bZ_p) &=& \frac{1}{\#\Aut(\bQ_p \oplus \bQ_p \oplus \bQ_p)} + \frac{1}{\#\Aut(\bQ_{p^3})} = \frac{1}{6} + \frac{1}{3} = \frac{1}{2},\\
C(\bZ_p[\sqrt{\alpha}]) &=& \frac{1}{\#\Aut(\bQ_p \oplus \bQ_p[\sqrt{\alpha}])} = \frac{1}{2} \quad \mbox{for $\alpha = p$, $\epsilon$  and $p\cdot\epsilon.$}
\end{eqnarray*}
Here, $\bQ_{p^3}$ denotes the unique unramified cubic extension of $\bQ_p$. Note that any ramified cubic field extension $K_p$ of $\bQ_p$ has discriminant divisible by $p^2$ (since $p$ will have ramification index~$3$ in $K_p$). This implies that $D(K_p)$ is not maximal for ramified $K_p$, and so no maximal quadratic ring is contained in $D(K_p)$.

When $p = 2$, there are 8 maximal quadratic rings over $\bZ_2$ (up to isomorphism), namely, $\bZ_2 \oplus \bZ_2$ and $\bZ_2[\sqrt{\alpha}]$ where $\alpha = 2$, $3$, $5$, $6$, $7$, $10$, or $14$. As above, we have that $C(\bZ_2 \oplus \bZ_2) = \frac{1}{2}$. Finally, it is easy to see that for each possible value of $\alpha$,
	$$C(\bZ_2[\sqrt{\alpha}]) = \frac{1}{\#\Aut(\bQ_2 \oplus \bQ_2[\sqrt{\alpha}])} = \frac{1}{2}.$$
Again, any ramified cubic extension $K_2$ of $\bQ_2$ has discriminant divisible by $4$, which implies that $D(K_2)$  does not contain any maximal orders.
\end{proof}

By the above lemma, we see that if $(\Sigma_p)$ is any acceptable collection of local specifications of maximal quadratic rings over $\bZ_p$, then
$$\sum_{R \in \Sigma_p} \frac{C(R)}{\Disc_p(R)} = \sum_{R \in \Sigma_p} \frac{1}{2 \cdot \Disc_p(R)}.$$
Thus $M_\Sigma = 1$, and so by Theorem~\ref{gensigmaord} we obtain Corollary~\ref{maxcase}.

\section{The mean number of 3-torsion elements in the ideal groups of
  quadratic orders in acceptable families (Proof of Theorems~\ref{gensigmaid} and \ref{diff})} 

Finally, we prove Theorems~\ref{gensigmaid} and \ref{diff}, which generalize Theorem \ref{sigmaid} and the work of \S3.2 by determining the mean number of 3-torsion elements
in the ideal groups of quadratic orders satisfying quite general sets
of local conditions.

To this end, fix an acceptable collection $(\Sigma_p)$ of local
specifications for quadratic orders, and fix any $i\in\{0,1\}$.  Let $S = S({\Sigma,i})$
denote the set of all irreducible elements $v\in
V_\Z^{\ast (i)}$ such that, in the corresponding triple $(\O,I,\delta)$, we
have that $\O\in\Sigma$ and $I$ is invertible as an ideal class of $\O$
(implying that $I\otimes\Z_p$ is the trivial ideal class of
$\O\otimes\Z_p$ for all $p$).  

\begin{proposition}[{\bf \cite[Thm.~31]{BST}}] Let $S_p(\Sigma,i)$
  denote the closure of $S(\Sigma,i)$ in $V^\ast_{\Z_p}$. 
Then
\begin{equation}\label{ramanujan22}
\lim_{X\to\infty} \frac{N^\ast(S(\Sigma,i);X)}X
\,\,=\,\,
\frac{1}{2n_i^\ast}\cdot
\prod_p\Bigl(\frac{p-1}{p}\cdot \sum_{x\in S_p(\Sigma,i)/\GL_2({\Z_p})}
\frac{1}{\disc_p(x)}\cdot\frac{1}{|\Stab_{\GL_2(\Z_p)}(x)|}\Bigr),
\end{equation}
where $\disc_p(x)$ denotes the reduced discriminant of $x\in V_{\Z_p}^\ast$ as a
power of $p$ and $\Stab_{\GL_2(\Z_p)}(x)$ denotes the stabilizer of $x$ in 
$\GL_2(\Z_p)$. \end{proposition}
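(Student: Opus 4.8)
The plan is to deduce this from Davenport's count (Theorem~\ref{thmdensity}) in three stages: first treat finitely many local conditions, then remove the finiteness restriction by a uniformity sieve, and finally repackage the resulting Euler product into the weighted orbit sum on the right-hand side of~(\ref{ramanujan22}).

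First I would observe that the conditions defining $S(\Sigma,i)$ are genuinely local: fixing the sign of the discriminant (the superscript $(i)$) is an archimedean condition, while requiring $\O\otimes\Z_p\in\Sigma_p$ and requiring $I$ to be invertible over $\Z_p$ each carve out a $\GL_2(\Z_p)$-stable, open and closed subset of $V_{\Z_p}^\ast$, whose intersection is precisely the $p$-adic closure $S_p(\Sigma,i)$. Openness and closedness hold because membership depends only on $v$ modulo a bounded power of $p$, by the explicit formulas~(\ref{defABC}) and~(\ref{projbcf}) together with the local form of Theorem~\ref{bcfideal} recorded in Remark~\ref{rmkzp}. Hence, writing $S_p=S_p(\Sigma,i)$, for any $Y$ the truncated set $V_\Z^{\ast(i)}\cap\bigcap_{p<Y}S_p$ is defined by finitely many congruence conditions modulo prime powers, and Theorem~\ref{thmdensity} applies directly to give
$$\lim_{X\to\infty}\frac{N^\ast\bigl(V_\Z^{\ast(i)}\cap\bigcap_{p<Y}S_p,\,X\bigr)}{X}=\frac{\pi^2}{4n_i^\ast}\prod_{p<Y}\mu_p^\ast(S_p).$$

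Next I would pass to the full intersection by the squeeze argument already used in \S3.1 and \S3.2. The key input is acceptability: for all sufficiently large $p$ the set $\Sigma_p$ contains every maximal quadratic ring over $\Z_p$, and over a maximal local ring every ideal is automatically invertible, so $V_\Z^{\ast(i)}\setminus S_p$ is contained in the set $\cW_p^\ast$ of forms whose associated order is non-maximal at $p$ (equivalently $p^2\mid\disc$). Proposition~\ref{unifest} bounds $N(\cW_p^\ast;X)=O(X/p^2)$ with implied constant independent of $p$. Using the inclusion $\bigcap_{p<Y}S_p\subset\bigl(S(\Sigma,i)\cup\bigcup_{p\ge Y}\cW_p^\ast\bigr)$, the upper bound from the truncation and the lower bound from the uniformity estimate agree after letting $X\to\infty$ and then $Y\to\infty$, the tail being $O\bigl(\sum_{p\ge Y}p^{-2}\bigr)=o(1)$; this yields
$$\lim_{X\to\infty}\frac{N^\ast(S(\Sigma,i);X)}{X}=\frac{\pi^2}{4n_i^\ast}\prod_p\mu_p^\ast(S_p).$$

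Finally I would reinterpret each local factor as a weighted orbit count. Since $S_p$ is a disjoint union of $\GL_2(\Z_p)$-orbits, additivity of $\mu_p^\ast$ gives $\mu_p^\ast(S_p)=\sum_{x\in S_p/\GL_2(\Z_p)}\mu_p^\ast(\GL_2(\Z_p)\cdot x)$, and the volume of a single orbit is computed by the orbit--stabilizer relation combined with the Jacobian factor relating orbit volumes to values of the discriminant, the latter being the relative invariant of the twisted $\GL_2$-action. This local computation expresses $\mu_p^\ast(\GL_2(\Z_p)\cdot x)$ as a fixed normalizing constant (built from $\Vol(\GL_2(\Z_p))$ and the normalization $\mu_p^\ast(V_{\Z_p}^\ast)=1$) times $\frac{1}{\disc_p(x)\,|\Stab_{\GL_2(\Z_p)}(x)|}$; collecting the per-prime constants into the factors $\frac{p-1}{p}$ and absorbing the archimedean constant $\frac{\pi^2}{4}$ into the global $\frac12$ produces exactly the right-hand side of~(\ref{ramanujan22}). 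I expect this last step---the $p$-adic orbit-volume computation and the bookkeeping of normalizations, so that the constants reorganize into $\frac{p-1}{p}$ per prime and the global $\frac{1}{2n_i^\ast}$---to be the main obstacle. As a consistency check, specializing to the family of all projective forms (so that $\mu_p^\ast(S_p)=1-p^{-2}$ by Lemma~\ref{primdensity}) must recover the value $\frac{3}{2n_i^\ast}$ already obtained in \S3.2.
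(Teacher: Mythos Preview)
Your proposal is correct and follows essentially the same route as the paper: extend Theorem~\ref{thmdensity} to infinitely many congruence conditions via the squeeze argument of \S\ref{thm1pf}--\S\ref{noncftorders} using Proposition~\ref{unifest} and acceptability, then rewrite each $\mu_p^\ast(S_p)$ as a weighted sum over $\GL_2(\Z_p)$-orbits. The one place the paper is more explicit than you is precisely the step you flag as ``the main obstacle'': rather than computing the orbit volume directly in the starred normalization, the paper passes to the unstarred density $\mu_p$ on $V_{\Z_p}$ (picking up a factor of $9$ at $p=3$ since $[V_\Z:V_\Z^\ast]=9$), invokes \cite[Lemma~32]{BST} for the formula $\mu_p(S_p)=\frac{\#\GL_2(\F_p)}{p^4}\sum_x\frac{1}{\Disc_p(x)\,|\Stab(x)|}$, and then converts $\Disc_p$ back to $\disc_p$ (another factor of $27$ at $p=3$), after which the constants collapse cleanly to $\frac{1}{2n_i^\ast}\prod_p\frac{p-1}{p}$.
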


\begin{proof} First, note that although $S(\Sigma,i)$ might be defined by infinitely
many congruence conditions, the estimate provided
in Proposition~\ref{unifest} (and the fact that $\Sigma$ is
acceptable) shows that equation (\ref{ramanujan}) continues to hold
for the set $S(\Sigma,i)$, i.e.,
	\begin{equation*}
\lim_{X \rightarrow \infty} \frac{N^\ast(S(\Sigma,i),X)}{X} = \frac{\pi^2}{4 \cdot n_i^\ast}\prod_p \mu_p^\ast(S(\Sigma,i)).
	\end{equation*}
The argument is identical to that in \S\ref{thm1pf} or \S\ref{noncftorders}.  

If $\mu_p(S)$ denotes the $p$-adic density of $S$ in $V_\bZ$, where $\mu_p$ is normalized so that $\mu_p(V_\bZ) = 1$, then $\mu_p^\ast(S) = \mu_p(S)$ for $p \neq 3$ and $\mu_3^\ast(S) = 9 \mu_3(S)$. (This is just a reformulation of the fact that $[V_\bZ:V_\bZ^\ast] = 9$.) Thus, 
	\begin{equation*}
\lim_{X \rightarrow \infty} \frac{N^\ast(S(\Sigma,i),X)}{X} = \frac{9\cdot\pi^2}{4 \cdot n_i^\ast}\prod_p \mu_p(S(\Sigma,i)).
	\end{equation*}
By \cite[Lemma~32]{BST}, we have that
	$$\mu_p(S(\Sigma,i)) = \frac{\#\GL_2(\bF_p)}{p^4}\cdot \sum_{x \in S_p/\GL_2(\bZ_p)} \frac{1}{\Disc_p(x)}\cdot \frac{1}{|\Stab_{\GL_2(\bZ_p)}(x)|},$$
where $\Disc_p(x)$ denotes the discriminant of $x \in V_{\bZ_p}^\ast$ as a power of $p$.
Note that since $\Disc_p(x) = \disc_p(x)$ for all $p \neq 3$ and $\Disc_3(x) = 27\cdot\disc_3(x)$, we have that 
    \begin{eqnarray*}
    \lim_{X \rightarrow \infty} \frac{N^\ast(S(\Sigma,i),X)}{X} 
    &=& \frac{9\cdot\pi^2}{4 \cdot n_i^\ast}\cdot \frac{1}{27}\cdot \prod_p \frac{\#\GL_2(\bF_p)}{p^4}\cdot \sum_{x \in S_p(\Sigma,i)/\GL_2(\bZ_p)} \frac{1}{\disc_p(x)}\cdot \frac{1}{|\Stab_{\GL_2(\bZ_p)}(x)|} \\
    &=& \frac{1}{2n_i^\ast} \cdot \prod_p\left(\frac{p-1}{p}\right) \sum_{x \in S_p(\Sigma,i)/\GL_2(\bZ_p)} \frac{1}{\disc_p(x)}\cdot \frac{1}{|\Stab_{\GL_2(\bZ_p)}(x)|}.
    \end{eqnarray*}
\end{proof}

Now, if we set
\begin{equation}\label{padicmassdef}
	M_p(S(\Sigma,i)) := \sum_{x \in S_p(\Sigma,i)/\GL_2(\bZ_p)} \frac{1}{\disc_p(x)}\cdot\frac{1}{|\Stab_{\GL_2(\bZ_p)}(x)|},
\end{equation}
then the description of the stabilizer in Corollary~\ref{gl2bijection} (in its form over $\Z_p$; see Remark~\ref{rmkzp}) 
allows us to express $M_p(S(\Sigma,i))$ in another way.  
Namely, if $R\in\Sigma_p$ is a nondegenerate
quadratic ring over~$\Z_p$, then in a corresponding triple
$(R,I,\delta)$ we can always choose $I=R$, since $I$ is a principal
ideal (recall that invertible means locally principal).  
Let $\tau(R)$ denote the number of elements $\delta$, modulo cubes, yielding a
valid triple $(R,R,\delta)$ over $\Z_p$. Then 
$\tau(R)=|U^+(R)/U^+(R)^{\times3}|$, where $U^+(R)$ denotes the group of units
of $R$ having norm~1.  Since $(R,R,\delta)$ is
$\GL_2(\bZ_p)$-equivalent to the triple $(R,R,\bar\delta)$, and $\bar\delta =
\kappa^3\delta$ for some $\kappa\in R\otimes\Q$ if and only if
$\delta$ is itself a cube (since $\bar\delta=N(I)^3/\delta$), we see
that
\begin{equation}\label{massdef3}
 M_p(S(\Sigma,i)) = \sum \frac{|U^+(R)/U^+(R)^{\times3}|}
{\Disc_p(R)\cdot|\Aut(R;R,\delta)|
\cdot|U_3^+(R)|},
\end{equation}
where the sum is over all isomorphism classes of quadratic rings $R$ over
$\Z_p$ lying in $\Sigma_p$, and where $U_3^+(R)$ denotes the subgroup of 3-torsion elements of $U^+(R)$.
We have the following lemma:

\begin{lemma}\label{weird}
Let $R$ be a nondegenerate quadratic ring over $\Z_p$.  Then
\[\frac{|U^+(R)/U^+(R)^{\times 3}|}{|U_3^+(R)|}
\]
is $1$ if $p\neq 3$, and is $3$ if $p=3$.
\end{lemma}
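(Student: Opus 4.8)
The plan is to recognize the quantity in question as the index ratio $|\mathrm{coker}(\phi)|/|\ker(\phi)|$ for the cubing endomorphism $\phi\colon A\to A$, $u\mapsto u^3$, of the compact abelian group $A:=U^+(R)$ of norm-$1$ units of $R$: by definition $U^+(R)/U^+(R)^{\times 3}=\mathrm{coker}(\phi)$ and $U_3^+(R)=\ker(\phi)$. First I would record the structural facts about $A$. It is a compact abelian $p$-adic analytic group, and its torsion subgroup $T$ is finite, being contained in the finite group of roots of unity of the \'etale $\bQ_p$-algebra $R\otimes\bQ_p$. Hence $A\cong \Z_p^{\,r}\times T$ for some $r\ge 0$, where $r=\dim A$ is the dimension of $A$ as a $p$-adic analytic group; the splitting follows because the torsion-free quotient $A/T$ is $\cong\Z_p^{\,r}$ and $\Z_p^{\,r}$ is free.

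Next I would compute the ratio from this decomposition, using that $|\mathrm{coker}|/|\ker|$ is multiplicative over direct products. On the finite factor $T$ one has $|T/T^3|=|T[3]|$, since for any finite abelian group the cubing map has kernel and cokernel of equal cardinality; thus $T$ contributes a factor of $1$ no matter how much $3$-torsion it carries. (This is precisely why the possible presence of $\mu_3\subset U^+(R)$, as happens for $R=\Z_3[\zeta_3]$, causes no trouble.) On each free factor $\Z_p$ the map $\phi$ is, additively, multiplication by $3$: the kernel is trivial and the cokernel is $\Z_p/3\Z_p$, which is trivial when $p\ne 3$ and of order $3$ when $p=3$. Combining these, the ratio equals $|3|_p^{-r}$, i.e.\ it is $1$ when $p\ne 3$ and $3^{\,r}$ when $p=3$.

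It remains to show $r=\dim U^+(R)=1$, which is needed only in the case $p=3$. Here I would argue by a dimension count for $p$-adic Lie groups. The unit group $R^\times$ is open in $R\cong\Z_p^2$, so $\dim R^\times=2$, and the norm $N\colon R^\times\to\Z_p^\times$ satisfies $N(x)=\det(m_x)$, where $m_x$ denotes multiplication by $x$. Differentiating $\det$ at the identity gives trace, so the differential $dN_1$ is the trace form $\mathrm{Tr}\colon R\otimes\bQ_p\to\bQ_p$. Since $R\otimes\bQ_p$ is a nondegenerate (\'etale) quadratic algebra its trace form is nonzero, so $dN_1$ is surjective and $N$ is a submersion onto an open subgroup of $\Z_p^\times$; therefore $U^+(R)=\ker N$ has dimension $2-1=1$, giving $r=1$ and hence the value $3$ when $p=3$.

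The main obstacle I expect is making the dimension count $r=1$ fully rigorous, i.e.\ verifying that $N$ is genuinely a submersion rather than merely recording the tangent-level statement; one can corroborate non-constancy explicitly, e.g.\ $N(1+p)=(1+p)^2$ lies in the open subgroup of $\Z_p^\times$ that it generates. A secondary technical point is justifying the splitting $A\cong\Z_p^{\,r}\times T$ and the equal-cardinality of kernel and cokernel of cubing on the finite part $T$; both are standard but should be stated. Finally I would emphasize that the answer depends only on $p$ and not on the particular ring $R$, exactly as one expects from the local form of Lemma~\ref{deltalemma}.
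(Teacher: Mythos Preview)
Your approach is essentially the same as the paper's: decompose $U^+(R)$ as a product of a finite torsion group and a free $\Z_p$-module of rank~$r$, observe that the ratio is $1$ on the finite factor and $|3|_p^{-r}$ on the free factor, and then show $r=1$. The paper simply asserts that $U^+(R)$ has $\Z_p$-rank~$1$ (as the norm-one subgroup of the rank-$2$ module $R^\times$), whereas you justify this more carefully via the differential of the norm map; your one soft spot is the splitting $A\cong\Z_p^{\,r}\times T$, since $T$ need not be a $\Z_p$-module, but this is easily repaired by first separating the pro-$p$ and prime-to-$p$ parts of the profinite group $A$.
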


\begin{proof}
  The unit group of $R$, as a multiplicative group, is a finitely
  generated, rank 2 $\Z_p$-module. Hence the submodule $U^+(R)$,
  consisting of those units having norm 1, is a finitely generated
  rank~1 $\Z_p$-module.  It follows that there is an isomorphism $U^+(R)\cong F \times
  \Z_p$ as $\Z_p$-modules, where $F$ is a finite abelian $p$-group.

  Let $F_3$ denote the 3-torsion subgroup of $F$.  Since $F_3$ is the
  kernel of the multiplication-by-3 map on $F$, it is clear that
  $|F/(3\cdot F)|/|F_3|=1$.  Therefore, it suffices to check the lemma on the
  ``free'' part of $U^+(R)$, namely, the $\Z_p$-module $\Z_p$,
  where the result is clear.  (The case $p=3$ differs because
  $3\cdot\Z_p$ equals $\Z_p$ for $p\neq 3$, while 
$3\cdot\Z_3$ has index 3 in $\Z_3$.)
\end{proof}

Combining (\ref{ramanujan22}), (\ref{padicmassdef}), (\ref{massdef3}), and
Lemma~\ref{weird}, we obtain 
\begin{equation}\label{ramanujan2}
\lim_{X\to\infty} \frac{N^\ast(S(\Sigma,i), X)}X
=\frac{3}{2n_i^\ast}\cdot
\prod_p\Bigl(\frac{p-1}{p}\cdot \sum_{R\in\Sigma_p}
\frac{1}{2\cdot\Disc_p(R)}\Bigr).
\end{equation}
By Corollary~\ref{hr2} and Theorem~\ref{reducible}, we have 
\begin{equation}\label{irredcountgen}
N^\ast(S(\Sigma,i), X) = 
	 \begin{cases} \displaystyle\sum_{{\cO \in \Sigma,}\atop{0<\Disc(\O)<X}} \left(3\cdot|\Cl_3(\cO)| - |\cI_3(\cO)|\right) & \mbox{if } i = 0, \\[.35in]
	 \displaystyle\sum_{{\cO \in \Sigma,}\atop{0 < -\Disc(\cO) < X}} \left(|\Cl_3(\cO)| - |\cI_3(\cO)|\right) & \mbox{if } i = 1.
	 \end{cases}
\end{equation}
Hence we conclude using Lemma~\ref{dht} that
\begin{eqnarray*} 
\frac{\displaystyle\sum_{{\cO \in \Sigma,}\atop{0<\Disc(\O)<X}} \left(|\Cl_3(\cO)| - \frac{1}{3}\cdot |\cI_3(\cO)|\right)}{\displaystyle\sum_{{\cO \in \Sigma,}\atop{0<\Disc(\O)<X}} 1} &=& \frac{1}{3}\cdot\frac{\displaystyle\frac{3}{2n_0^\ast} \cdot\prod_p\left(\frac{p-1}{p}\cdot \sum_{R\in\Sigma_p}
\frac{1}{2 \cdot \Disc_p(R)}\right)}{\displaystyle\frac{1}{2}\cdot\prod_p\left(\frac{p-1}{p}\cdot \sum_{R\in\Sigma_p}
\frac{1}{2 \cdot \Disc_p(R)}\right)} = 1, \quad \mbox{and} \\\frac{\displaystyle\sum_{{\cO \in \Sigma,}\atop{0<-\Disc(\O)<X}} \Bigl(|\Cl_3(\cO)| - |\cI_3(\cO)|\Bigr)}{\displaystyle\sum_{{\cO \in \Sigma,}\atop{0<-\Disc(\O)<X}} 1} &=& \frac{\displaystyle\frac{3}{2n_1^\ast} \cdot\prod_p\left(\frac{p-1}{p}\cdot \sum_{R\in\Sigma_p}
\frac{1}{2 \cdot \Disc_p(R)}\right)}{\displaystyle\frac{1}{2}\cdot\prod_p\left(\frac{p-1}{p}\cdot \sum_{R\in\Sigma_p}
\frac{1}{2 \cdot \Disc_p(R)}\right)} = 1,
\end{eqnarray*}
yielding Theorem~\ref{diff}.  In conjunction with 
Theorem~\ref{gensigmaord}, we also then obtain Theorem~\ref{gensigmaid}.

\subsection*{Acknowledgments}

We are very grateful to Henri Cohen, Benedict Gross, Hendrik Lenstra,
Jay Pottharst, Arul Shankar, Peter Stevenhagen, and Jacob Tsimerman
for all their help and for many valuable
discussions. The first author was supported by the Packard and Simons
Foundations and NSF Grant DMS-1001828. The second author was supported
by a National Defense Science \& Engineering Fellowship and an NSF
Graduate Research Fellowship.

\end{document}